\renewcommand{\thefootnote}{} 
\theoremstyle{plain} 
\newtheorem{theorem}{\indent\sc Theorem}[section]
\newtheorem{lemma}[theorem]{\indent\sc Lemma}
\newtheorem{proposition}[theorem]{\indent\sc Proposition}
\theoremstyle{definition} 
\newtheorem{definition}[theorem]{\indent\sc Definition}
\newtheorem{remark}[theorem]{\indent\sc Remark}
\newtheorem{example}[theorem]{\indent\sc Example}
\newtheorem{problem}[theorem]{\indent\sc Problem}
\newenvironment{sproof}{%
  \proof}{\endproof}
\newcommand{\cA}{\mathcal{A}}
\newcommand{\cR}{\mathcal{R}}
\newcommand{\cH}{\mathcal{H}}
\newcommand{\cC}{\mathcal{C}}
\newcommand{\bR}{\mathbb{R}}
\newcommand{\bZ}{\mathbb{Z}}
\newcommand{\stab}[2]{\operatorname{Stab}_{#1}(#2)}
\newcommand{\im}{\operatorname{im}}
\newcommand{\G}[1]{G_0(#1)}
\newcommand{\Gy}[1]{G_y(#1)}
\newcommand{\yG}[1]{{}_yG(#1)}
\newcommand{\yGy}[1]{{}_yG_y(#1)}
\newcommand{\N}{P_n}
\newcommand{\NN}{\mathfrak{C}_n}
\newcommand{\Nn}{\N^{<\omega}}
\newcommand{\x}[2]{{x_{#1}}_{[#2]}}
\newcommand{\y}[2]{y_{#1}^{#2}}
\newcommand{\supp}[1]{\textrm{supp}(#1)}
\DeclareMathOperator{\Homeo}{Homeo}
\newcommand{\LM}{\{\G{n}, \yG{n}, \Gy{n}, \yGy{n}\}}
\newcommand{\ov}[1]{\overline{#1}}
\newcommand\cinput[2]{\lower#1pt\hbox{\input{#2}}}
\let\@wraptoccontribs\wraptoccontribs\makeatother
\subjclass[2020]{Primary: 57K43; Secondary: 20F65}
\begin{document}
\keywords{Thompson's groups, HNN extension, symplectic 4-manifolds, fundamental group at infinity}

\title{The Lodha--Moore groups and their $n$-adic generalizations are not SCY}
\author{Yuya Kodama and Akihiro Takano}
\contrib[With an appendix by]{Yuya Kodama}
\date{}
\renewcommand{\thefootnote}{\arabic{footnote}}  
\setcounter{footnote}{0} 

\begin{abstract}
  A closed 4-manifold is symplectic Calabi--Yau (SCY) if its canonical class is trivial.
  Friedl and Vidussi proved that Thompson's group $F$ cannot be the fundamental group of any SCY manifold.
  In this paper, we show that its generalizations, called the Brown--Thompson group and the $n$-adic Lodha--Moore groups, cannot be also the fundamental group of any SCY manifold by using their method.
  From this proof, we also show that there exist non-trivial infinitely many examples which satisfy Geoghegan's conjecture.
\end{abstract}

\maketitle

\section{Introduction}
We assume that a 4-manifold $M$ is connected, oriented, and closed.
It is well known that its fundamental group $\pi_1(M)$ is finitely presented.
Conversely, any finitely presented group is isomorphic to the fundamental group of a certain 4-manifold.
Moreover, Gompf \cite{gompf1995symplectic} proved that such a 4-manifold can be constructed to be symplectic.
Thus it is natural to consider a relation between an additional geometric condition of a symplectic manifold with an algebraic condition of its fundamental group.
A symplectic manifold is said to be \textit{symplectic Calabi--Yau} (SCY for short) if its canonical class, the first Chern class of its cotangent bundle, is trivial.
In the 6-dimensional case, SCY manifolds are abundant in the sense that any finitely presented group is realized as the fundamental group of an SCY manifold \cite{fine2013SCY}.
However, the class of SCY 4-manifolds is very restricted.
Indeed, the only known examples of SCY 4-manifolds are the K3 surface and torus bundles over tori, and they are expected to be the only examples \cite{donaldson2008problem, li2006symplectic}.
In particular, Morgan and Szab\'o \cite{morgan1997K3} proved that any simply-connected SCY 4-manifold is homotopy equivalent (therefore homeomorphic by Freedman's result) to the K3 surface.
A finitely presented group $G$ is said to be \textit{SCY} if there exists an SCY 4-manifold $M$ such that $\pi_1(M) \cong G$.
The restriction that a given group is SCY is reflected in several algebraic invariants.
In the case of the first Betti number $b_1(G) = 0$, there are no non-trivial residually finite SCY groups \cite{friedl2013symplectic}.
On the other hand, in the case of $b_1(G) > 0$, the following two invariants are useful to determine whether SCY:
\begin{itemize}
  \item the \textit{virtual first Betti number} $vb_1(G) \coloneqq \sup \{ b_1(H) \mid H\ $is a finite index subgroup of$\ G \}$; and
  \item the \textit{Hausmann--Weinberger invariant} $q(G) \coloneqq \min \{ \chi(M) \mid \ M\ $is a 4-manifold with$\ \pi_1(M) \cong G \}$, where $\chi(M)$ is the Euler characteristic of $M$.
\end{itemize}
A topological interpretation of $vb_1(G)$ is the supremum of the first Betti number of all finite covering spaces of a manifold $M$ with $\pi_1(M) \cong G$.
Note that $vb_1$ is generally larger than $b_1$, and possibly takes value $\infty$.
For example, $vb_1$ is completely calculated for all 3-manifold groups by the virtually Haken conjecture \cite{agol2013haken}.
In particular, if $G$ is a hyperbolic 3-manifold group, then $vb_1(G) = \infty$.
The Hausmann--Weinberger invariant \cite{hausmann1985weinberger} is calculated for some groups such as the fundamental groups of orientable closed surfaces \cite{eckmann1997manifold}, knot groups \cite{eckmann1997manifold}, free abelian groups \cite{hausmann1985weinberger, kirk2005hausmann}, and so on.
Also, Hildum \cite{hildum2015artin} calculated this invariant for some class of right-angled Artin groups and gave a conjecture for all of them.
In general, these two invariants are difficult to determine completely, but if $G$ is an SCY group with $b_1(G) > 0$, then the following are true:
\begin{enumerate}
  \item $2 \leq b_1(G) \leq vb_1(G) \leq 4$ \cite{li2006kodaira, bauer2008complex}; and
  \item if the first $L^2$-Betti number $b_1^{(2)}(G) = 0$, then $q(G) = 0$ \cite{friedl2013symplectic}.
\end{enumerate}
Friedl and Vidussi \cite{friedl2015thompson} proved that Thompson's group $F$ is not SCY by using these facts\footnote{They essentially used only fact (2), since $b_1(F) = vb_1(F) = 2$.}.
They are motivated by the result that Thompson's group $F$ is not K\"ahler \cite{napier2006thompson}.
Also, Bauer \cite{bauer2008complex} asked whether another of Thompson's group $T$ is SCY or not.
This is the case of $b_1 = 0$ and is still an open problem.

Thompson's groups, denoted by $F$, $T$, and $V$, were defined by Richard Thompson in the 1960s.
His first motivation was a connection with logic, and nowadays, they are known to be groups with several mysterious properties.
For instance, it is known that the groups $T$ and $V$ are the first examples of finitely presented infinite simple groups.
The group $F$ is an example of a torsion-free group having type {$\rm F_\infty$} but not type {$\rm F$}.

Many groups similar to Thompson's groups have also been constructed to study such groups, and constructed groups are also interesting.
In this paper, since we will use the invariants already mentioned, we are interested in non-simple groups, or more precisely, non-perfect groups.
Since $T$ and $V$, and most of their generalizations, are perfect, we focus on generalizations of $F$.
The first candidate for such a group is the Brown--Thompson group $F(n)$, introduced by Brown \cite{brown1987finiteness}.
The group $F(n)$ (and $T(n)$) was defined as a subgroup of $V(n)$, where $V(n)$ is a generalization of $V$ defined by Higman \cite{MR0376874}.
The other candidates are the Lodha--Moore group $G_0$ and its generalization.
The group $G_0$ is a group obtained by adding a generator to $F$, defined by Lodha and Moore \cite{lodha2016nonamenable}.
It is the first example of finitely presented, non-amenable, torsion-free with no non-abelian free subgroups.
Its generalization, introduced by the first author \cite{kodama2023n}, is obtained by a natural generalization of $G_0$ and inherits most of the interesting properties of $G_0$.
For example, it also has all the properties mentioned above.

In this paper, for such groups, we show the following:

\begin{theorem}\label{main_1}
  For any $n \geq 2$, the Brown--Thompson group $F(n)$ is not SCY.
\end{theorem}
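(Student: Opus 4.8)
The plan is to exhibit, for each $n$, a violation of one of the two necessary conditions (1) and (2) that an SCY group with positive first Betti number must satisfy. The first step is to record the abelianization of the Brown--Thompson group: one has $H_1(F(n);\mathbb{Z}) \cong \mathbb{Z}^n$, so that $b_1(F(n)) = n \geq 2 > 0$ and both conditions are available. This naturally splits the proof into a large-$n$ range handled by (1) and a small-$n$ range handled by (2), in the manner of Friedl--Vidussi.

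For $n \geq 5$ I would simply invoke (1). Since $b_1(F(n)) \leq vb_1(F(n))$, we get $vb_1(F(n)) \geq n \geq 5$, contradicting the bound $vb_1(G) \leq 4$ forced on every SCY group with $b_1 > 0$. Thus $F(n)$ is not SCY for all $n \geq 5$, with no further input.

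For $2 \leq n \leq 4$ we have $b_1(F(n)) = n \in \{2,3,4\}$, so (1) is satisfied and I would argue through (2). Two facts are needed. The first is $b_1^{(2)}(F(n)) = 0$; I would deduce this from the self-similar structure of $F(n)$ (for instance, it is an ascending HNN extension of a copy of itself), which in particular guarantees that every finitely generated subgroup admits a conjugate of disjoint support, hence commuting with it. This ``commuting conjugates'' property is exactly what is needed to force the first $L^2$-Betti number to vanish. The second fact is $q(F(n)) \neq 0$; granting it, (2) says that an SCY $F(n)$ would satisfy $q(F(n)) = 0$, a contradiction that finishes the proof.

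The hard part will be showing $q(F(n)) \neq 0$. Because $b_1^{(2)}(F(n)) = 0$, any closed $4$-manifold $M$ with $\pi_1(M) \cong F(n)$ has $b_0^{(2)}(M) = b_1^{(2)}(M) = 0$ (the first $L^2$-Betti number depends only on $\pi_1$), so by $L^2$-Poincar\'e duality $\chi(M) = b_2^{(2)}(M) \geq 0$; hence $q(F(n)) \geq 0$, and the whole difficulty is to rule out $q(F(n)) = 0$, i.e. to show that no such $M$ has $\chi(M) = 0$. The obstacle is that the naive lower bounds are insufficient: both Hopf's inequality from ordinary homology and the $L^2$ inequality above yield only $\chi(M) \geq 0$, so ruling out equality requires genuinely new input. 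I therefore expect to need a finer argument in the spirit of Friedl--Vidussi, exploiting that $F(n)$ has infinite cohomological dimension---so that it is not a $4$-dimensional Poincar\'e duality group---to obstruct the existence of a closed $4$-manifold with this fundamental group and vanishing Euler characteristic. Turning this infinite-dimensionality into a strict inequality $\chi(M) > 0$ for every such $M$ is the crux of the argument.
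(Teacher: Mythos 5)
Your overall architecture is the same as the paper's: compute $b_1(F(n))=n$, kill $n\geq 5$ with the bound $vb_1\leq 4$, and for the remaining cases use the criterion that an SCY group with $b_1>0$ and $b_1^{(2)}=0$ must have $q=0$. Your treatment of $b_1^{(2)}(F(n))=0$ is also essentially the paper's: $F(n)$ is a strictly ascending HNN extension $F(n)\cong F(n)*_{x_0}$ of a finitely generated base, and a finitely presented ascending HNN extension with finitely generated base has vanishing first $L^2$-Betti number (the paper cites Hillman for this; your ``commuting conjugates'' alternative is not needed).

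However, there is a genuine gap at the step you yourself flag as the crux: you never prove $q(F(n))>0$, you only describe what kind of argument ought to work. The missing ingredients are concrete. First, you need Eckmann's theorem (quoted in the paper as Lemma 4.7): if $G$ is infinite, finitely presented, not virtually cyclic, with $b_1^{(2)}(G)=0$ \emph{and} $H^2(G;\mathbb{Z}[G])=0$, then any closed $4$-manifold $M$ with $\pi_1(M)\cong G$ and $\chi(M)=0$ is a $K(G,1)$ and $G$ is a Poincar\'e duality group of dimension $4$. Your appeal to ``infinite cohomological dimension'' would indeed contradict the PD$_4$ conclusion, but you cannot reach that conclusion without first verifying the hypothesis $H^2(F(n);\mathbb{Z}[F(n)])=0$, which is a computation of exactly the same nature as the one you are trying to avoid. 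The paper supplies it (Theorem 3.8): $H^i(F(n);\mathbb{Z}[F(n)])=0$ for \emph{all} $i\geq 0$, proved by induction on $i$ using the Mayer--Vietoris sequence of the ascending HNN extension $F(n)\cong F(n)_{\geq 1}*_{x_0}$ together with Brown's monomorphism lemma, which applies because $F(n)$ is of type ${\rm FP}_\infty$ and the base equals the whole group. With that in hand the contradiction is immediate: a PD$_4$ group has $H^4(G;\mathbb{Z}[G])\cong\mathbb{Z}$, whereas $H^4(F(n);\mathbb{Z}[F(n)])=0$. Without the cohomology computation and the explicit invocation of Eckmann's criterion, your argument for $2\leq n\leq 4$ does not close.
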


\begin{theorem} \label{main_2}
  For any $n\geq2$, the $n$-adic Lodha--Moore group $\G{n}$ is not SCY.
\end{theorem}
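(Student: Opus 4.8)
The plan is to argue by contradiction along the lines of Friedl and Vidussi's treatment of $F$, so that the skeleton of the argument coincides with that of Theorem \ref{main_1} and only the group-theoretic inputs are new. Assume $\G{n}$ is SCY. The first step is to read off the abelianization of $\G{n}$ from its defining presentation and thereby compute $b_1(\G{n})$. If $b_1(\G{n}) = 1$ we are done immediately, since this violates the inequality $2 \le b_1(G)$ of fact (1); the degenerate possibility $b_1(\G{n}) = 0$ would instead be handled by the non-existence of non-trivial residually finite SCY groups in that range, but I expect the abelianization computation to give $b_1(\G{n}) \ge 2$, so that the substantive case below is the operative one.

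In the case $b_1(\G{n}) \ge 2$ I would run the argument through the Hausmann--Weinberger invariant via fact (2), which requires two inputs. The first is the vanishing of the first $L^2$-Betti number, $b_1^{(2)}(\G{n}) = 0$. Here I would use the description of $\G{n}$ as an HNN extension together with the self-similar structure it inherits from the Brown--Thompson group $F(n)$: it is enough to exhibit inside $\G{n}$ an infinite normal subgroup whose first $L^2$-homology vanishes, or a supply of commuting conjugate copies, and then propagate the vanishing through the extension. Granting this, fact (2) forces $q(\G{n}) = 0$ under the standing assumption that $\G{n}$ is SCY.

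The contradiction then comes from a strictly positive lower bound for $q(\G{n})$. For any closed oriented $4$-manifold $M$ with $\pi_1(M) \cong \G{n}$, the Hopf sequence shows that the classifying map induces a surjection $H_2(M;\mathbb{Q}) \twoheadrightarrow H_2(\G{n};\mathbb{Q})$, so that $b_2(M) \ge d$ where $d \coloneqq \dim_{\mathbb{Q}} H_2(\G{n};\mathbb{Q})$; pairing the pulled-back classes against the intersection form of $M$ and using Poincar\'e duality improves this to a bound of the form $b_2(M) \ge 2d$. Combined with $\chi(M) = 2 - 2b_1(\G{n}) + b_2(M)$, this yields $q(\G{n}) \ge 2 - 2b_1(\G{n}) + 2d$. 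The last group-theoretic input is therefore the rational second homology $H_2(\G{n};\mathbb{Q})$, which I would again extract from the presentation or HNN data, and I expect $d$ to be large enough relative to $b_1(\G{n})$ to make the right-hand side positive, contradicting $q(\G{n}) = 0$.

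I expect the main obstacle to be the $L^2$-computation: the abelianization and the rational second homology are essentially presentation-level calculations, but establishing $b_1^{(2)}(\G{n}) = 0$ cleanly requires locating the correct normal or commuting structure inside $\G{n}$ and checking that it survives the $n$-adic generalization. A secondary difficulty is making the Hausmann--Weinberger lower bound sharp, since the naive bound $2 - 2b_1(\G{n})$ is negative when $b_1(\G{n}) \ge 2$ and only the Poincar\'e-duality-enhanced bound involving $d$ gives the required strict positivity.
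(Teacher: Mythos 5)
Your overall skeleton (compute $b_1$, establish $b_1^{(2)}(\G{n})=0$ from the HNN structure so that fact (2) forces $q(\G{n})=0$, then contradict this with a positive lower bound on $q$) matches the paper, and the $L^2$-step is indeed handled there exactly as you anticipate: $\G{n}$ is a strictly ascending HNN extension with finitely generated base (Proposition \ref{LM_Flike}), so Hillman's lemma gives $b_1^{(2)}(\G{n})=0$ directly. Note also that since $b_1(\G{n})=vb_1(\G{n})=n+1$, fact (1) already disposes of all $n\geq 4$; only $n=2,3$ genuinely need the Hausmann--Weinberger argument, though the paper runs it uniformly.

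The genuine gap is in your positivity argument for $q(\G{n})$. First, the strengthening $b_2(M)\geq 2d$ with $d=\dim_{\mathbb{Q}}H_2(\G{n};\mathbb{Q})$ is not automatic: the image of $H^2(\G{n};\mathbb{Q})$ in $H^2(M;\mathbb{Q})$ is isotropic for the intersection form only if $c_*[M]=0$ in $H_4(\G{n};\mathbb{Q})$ (where $c$ is the classifying map), and you have not argued this. Second, and more seriously, even the standard bound $q(G)\geq 2-2b_1(G)+b_2(G)$ requires knowing $H_2(\G{n};\mathbb{Q})$, and you would need $d>n$; this homology group is not computed in the paper and is not an ``essentially presentation-level calculation'' for these groups, so the whole contradiction rests on an unverified expectation. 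The paper avoids this entirely: it shows $\chi(M)=2b_0^{(2)}-2b_1^{(2)}+b_2^{(2)}=b_2^{(2)}(M)\geq 0$, hence $q(\G{n})\geq 0$, and then rules out $\chi(M)=0$ by Eckmann's lemma (Lemma \ref{PD}), which would force $\G{n}$ to be a Poincar\'e duality group of dimension $4$ with $H^4(\G{n};\bZ[\G{n}])\cong\bZ$. This is incompatible with Theorem \ref{LM-cohomology}, which establishes $H^i(\G{n};\bZ[\G{n}])=0$ for all $i$ by an induction using the Brown--Bieri Mayer--Vietoris sequence for the strictly ascending HNN decompositions together with the type ${\rm F}_\infty$ property from the appendix. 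You would need to replace your $H_2$-based lower bound with this (or an equivalent) group-ring cohomology vanishing argument for the proof to close.
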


In a similar way, we also obtain the following result.

\begin{theorem}\label{main_3}
  For every $n\geq2$, the $n$-adic Lodha--Moore group $\G{n}$ has trivial homotopy groups at infinity.
\end{theorem}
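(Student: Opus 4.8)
The plan is to translate the statement into the vanishing of cohomology with group-ring coefficients together with connectivity at infinity, and then feed these into the pro-Hurewicz theorem. Recall that for a group $G$ of type $\mathrm{F}_\infty$, acting freely and cocompactly on a contractible complex $X$ (the universal cover of a $K(G,1)$ with finite skeleta), the reduced pro-homology of the end of $X$ is governed by $H^{*}(G;\bZ G)$: one has $H^{k}_c(X)\cong H^{k}(G;\bZ G)$, and contractibility of $X$ turns this into the reduced pro-cohomology of the end. Thus $G$ has trivial pro-homology at infinity exactly when $H^{k}(G;\bZ G)=0$ for all $k$. If in addition $G$ is one-ended, semistable, and simply connected at infinity, the pro-Hurewicz theorem upgrades trivial pro-homology at infinity to trivial pro-homotopy at infinity. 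So I would split the proof into two pillars: (B) $H^{k}(\G{n};\bZ\G{n})=0$ for every $k$, and (A) $\G{n}$ is semistable and simply connected at infinity.

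First I would record that $\G{n}$ is of type $\mathrm{F}_\infty$, which is known from the first author's work. Next I would reuse the structural decomposition that drives the non-SCY argument: $\G{n}$ sits in a short exact sequence $1\to N\to \G{n}\to Q\to 1$ in which $N$ is an infinite normal subgroup of type $\mathrm{F}_\infty$, $Q$ is infinite, and $N$ is one of the Thompson-like subgroups for which the cohomology at infinity is already known to vanish, i.e. $H^{*}(N;\bZ N)=0$; this is the analogue for $N$ of the Brown--Geoghegan vanishing for $F$, and it is precisely the feature that also forces $b_1^{(2)}(\G{n})=0$ in the SCY argument. If instead the natural decomposition is as an ascending HNN extension $\G{n}=N\ast_\phi$, I would use the Bass--Serre / Mayer--Vietoris sequence for $H^{*}(-;\bZ\G{n})$ in place of the spectral sequence below.

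For pillar (B) I would run the Lyndon--Hochschild--Serre spectral sequence $E_2^{p,q}=H^{p}\!\big(Q;\,H^{q}(N;\bZ\G{n})\big)\Rightarrow H^{p+q}(\G{n};\bZ\G{n})$. Since $N$ is of type $\mathrm{FP}_\infty$ and $\bZ\G{n}$ is free as a $\bZ N$-module on a transversal of $N$ in $\G{n}$, one identifies $H^{q}(N;\bZ\G{n})\cong H^{q}(N;\bZ N)\otimes_{\bZ}\bZ Q$ as $\bZ Q$-modules, and the hypothesis $H^{q}(N;\bZ N)=0$ then kills every $E_2$-term, giving $H^{*}(\G{n};\bZ\G{n})=0$. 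The delicate point here is the coefficient identification: for an infinite group $N$ the functor $H^{q}(N;-)$ commutes with the relevant infinite direct sums only because $N$ is $\mathrm{FP}_\infty$, so I would verify these finiteness hypotheses carefully.

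For pillar (A), one-endedness is immediate from $H^{1}(\G{n};\bZ\G{n})=0$, a consequence of (B). Semistability and simple connectivity at infinity I would derive from the same decomposition via Mihalik's theorems: an extension (or HNN extension) with infinite $\mathrm{F}_\infty$ pieces whose base $N$ is one-ended and simply connected at infinity is again simply connected at infinity. With (A) and (B) in place, pro-Hurewicz yields trivial homotopy groups at infinity, and since the argument is uniform in $n\ge 2$ it produces infinitely many examples. The main obstacle I anticipate is pillar (A) together with the homology-to-homotopy passage: vanishing of $H^{*}(\G{n};\bZ\G{n})$ only gives trivial pro-homology, and converting this to trivial pro-homotopy forces one to establish semistability and simple connectivity at infinity directly from the concrete decomposition of $\G{n}$ rather than from a cohomological black box, which is exactly where the explicit HNN / normal-subgroup structure of the $n$-adic Lodha--Moore groups must be exploited.
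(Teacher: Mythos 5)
Your overall architecture is the same as the paper's: prove simple connectivity at infinity via Mihalik's theorem applied to ascending HNN decompositions, prove $H^k(G;\bZ[G])=0$ for all $k$, and combine the two through Geoghegan's Hurewicz theorem at infinity. Pillar (A) and the final homology-to-homotopy passage match the paper essentially verbatim. The gap is in your primary route to pillar (B). There is no short exact sequence $1\to N\to \G{n}\to Q\to 1$ with $N$ infinite, normal, of type $\mathrm{F}_\infty$, and $H^*(N;\bZ N)=0$: since the commutator subgroup $\G{n}'$ is simple, every nontrivial normal subgroup of $\G{n}$ contains $\G{n}'$, and $\G{n}'$ (like $F'$) is an increasing union of subgroups of ``compactly supported'' elements and is not even finitely generated, let alone of type $\mathrm{FP}_\infty$. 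In particular $F(n)$ is not normal in $\G{n}$, so the Brown--Geoghegan vanishing for $F(n)$ cannot be fed into a Lyndon--Hochschild--Serre spectral sequence; the structure driving both the non-SCY argument and this theorem is the ascending HNN decomposition, which is not an extension, and your coefficient identification $H^q(N;\bZ \G{n})\cong H^q(N;\bZ N)\otimes_{\bZ}\bZ Q$ presupposes normality and does not transfer.

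Even along the HNN fallback, which is the paper's actual route, your argument as stated begs the question: the base of the decomposition of $\G{n}$ is $\yG{n}$ (or $\Gy{n}$), not a group ``for which the cohomology is already known to vanish.'' The paper exhibits the four groups $\G{n},\yG{n},\Gy{n},\yGy{n}$ as strictly ascending HNN extensions of one another (Propositions \ref{LM_Flike} and \ref{LM_nonFlike}), and the vanishing of $H^i(G;\bZ[G])$ is then proved by a single induction on $i$ run \emph{simultaneously} for all four groups, using Brown's criterion \cite[Theorem 0.1]{brown1985cohomology} that the Mayer--Vietoris map $\alpha\colon H^i(H;\bZ[G])\to H^i(A;\bZ[G])$ is a monomorphism; for an ascending extension $A=H$ this map is an isomorphism with $\bZ[H]$-coefficients, which is what makes the criterion applicable. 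This in turn requires all four groups to be of type $\mathrm{FP}_\infty$, which the paper gets from the appendix (type $\mathrm{F}_\infty$ for $\G{n}$, Theorem \ref{appendix_main}) propagated through the same decompositions. So the missing idea is precisely the cyclic system of HNN decompositions among the four $n$-adic Lodha--Moore groups and the joint induction it supports; without it, pillar (B) has no base case for $\G{n}$ itself.
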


It is worth mentioning that this theorem is related to a conjecture about $F$.
In 1979, Geoghegan conjectured that the following hold for $F$:
\begin{enumerate}
  \item it is of type {$\rm F_\infty$};
  \item it has no non-abelian free subgroups;
  \item it is not amenable; and
  \item it has trivial homotopy groups at infinity.
\end{enumerate}
In \cite{brown1984infinite} and \cite{brin1985groups}, (1), (2), and (4) were proved.
However, as is well known, (3) remains open.
Since $G_0$ is similar to $F$, there is a question of whether the four conjectures hold for $G_0$ (and some related groups).
For these groups, Lodha and Moore \cite{lodha2016nonamenable} showed that (2) and (3) hold, Lodha \cite{lodha2020nonamenable} showed that (1) holds, and Zaremsky \cite{zaremsky2016hnn} showed that (4) holds.
In fact, the Lodha--Moore groups are the first examples satisfying all the conjectures.

Since $\G{n}$ is a natural generalization of $G_0$, there is also a question of whether the four conjectures hold for $\G{n}$.
In \cite{kodama2023n}, the first author showed that (2) and (3) also hold for $\G{n}$.
Moreover, it can be shown that (1) naturally follows from a generalization of \cite{lodha2020nonamenable} (Theorem \ref{appendix_main}).
Therefore, from Theorem \ref{main_3}, we conclude that there exist infinitely many groups satisfying Geoghegan's four conjectures.

This paper is organized as follows:
in Section \ref{section_backgrounds}, we first review the definition and properties of the Brown--Thompson group $F(n)$.
Then, we summarize the Lodha--Moore group and its $n$-adic generalization, including some groups not explicitly mentioned so far and their properties.
In Section \ref{section_HNN}, we describe these groups as strictly ascending HNN extensions of certain groups.
These are powerful tools to compute the cohomology groups of these groups with coefficients in their group ring over $\mathbb{Z}$.
In Section \ref{section_notSCY}, we show that $F(n)$ and $\G{n}$ are not SCY by using previous results.
The former result follows immediately from well-known facts, but we describe them for the reader`s convenience.
Finally, in section \ref{section_homotopy}, we give a proof of Theorem \ref{main_3}, using results in Section \ref{section_HNN} and \ref{section_notSCY} again.
An appendix is added in Section \ref{appendix_Finf}.
\section{Backgrounds on generalizations of Thompson's group $F$} \label{section_backgrounds}
For two homeomorphisms $f$ and $g$ on a topological space, we always write $fg$ for $g \circ f$.
All groups introduced in this section are subgroups of homeomorphism groups of some topological spaces.

Let $\N$ be the set $\{0, \dots, n-1\}$.
We endow $\N$ with the discrete topology and endow $\NN \coloneqq \N \times \N \times \cdots$ with the product topology.
This space is homeomorphic to the Cantor set.
Let $\Nn$ be the set of all finite words on $\N$.
We always assume that the empty word $\epsilon$ is in $\Nn$.
For any $s \in \Nn$ and $t \in \Nn$ or $\NN$, their concatenation is written as $st$.
\subsection{Thompson's group $F$ and the Brown--Thompson group $F(n)$}
Fix an integer $n\geq2$.
We define the Brown--Thompson group $F(n)$ as a subgroup of the homeomorphism group of the Cantor set.
For details of Thompson's group $F$ and it generalization $F(n)$, see \cite{cannon1996introductory, burillo2001metrics, brin1998automorphisms}.
%
\begin{definition}
  Let $x_0, x_1, \dots, x_{n-2}$, and $x_{n-1}$ be defined as the following homeomorphisms:
  \begin{align*}
    x_0     & \colon \NN \to\NN;
    \begin{cases}
      0k\eta \mapsto k\eta      & (k<n-1)  \\
      0(n-1)\eta \mapsto (n-1)0\eta        \\
      k\eta \mapsto (n-1)k \eta & (0<k<n),
    \end{cases} \\
    x_1     & \colon \NN \to \NN;
    \begin{cases}
      0\eta \mapsto 0\eta                 \\
      1k \eta \mapsto k\eta    & (k<n-2)  \\
      1(n-2)\eta \mapsto (n-1)0\eta       \\
      1(n-1)\eta \mapsto (n-1)1 \eta      \\
      k\eta \mapsto (n-1)k\eta & (1<k<n),
    \end{cases}  \\
            & \vdots                     \\
    x_{n-2} & \colon\NN \to \NN;
    \begin{cases}
      k \eta \mapsto k \eta          & (k<n-2)       \\
      (n-2)0\eta \mapsto (n-2)\eta                   \\
      (n-2)k \eta \mapsto (n-1)(k-1) & (0<k\leq n-1) \\
      (n-1)\eta \mapsto (n-1)(n-1) \eta,
    \end{cases}
    \shortintertext{and}
    x_{n-1} & \colon \NN \to \NN;
    \begin{cases}
      k\eta \mapsto k \eta & (k<n-1) \\
      (n-1)\eta \mapsto (n-1)x_0(\eta).
    \end{cases}
  \end{align*}
  Then, the \textit{Brown--Thompson group $F(n)$} is a subgroup of $\Homeo(\NN)$ generated by $x_0, x_1, \dots, x_{n-2}$, and $x_{n-1}$. If $n=2$, the group $F(2)$ is called \textit{Thompson's group $F$}.
\end{definition}
For $i\geq n$, let $k_i=\lfloor i/(n-1)\rfloor$, and define $x_i \coloneqq x_0^{-k_i}x_{i-k_i(n-1)}x_0^{k_i}$.
Then, the following group presentations are known \cite{brin1998automorphisms, guba1997diagram}:
\begin{align}
  F(n) & \cong \langle x_0, x_1, x_2, \dots  \mid \text{$x_i^{-1} x_j x_i=x_{j+n-1}$, for $i<j$}  \rangle \label{eq:F(n)-presentation} \\
       & \cong \left\langle x_0, x_1, \dots, x_{n-1} \;\middle|\;
  \begin{array}{l} \text{${x_k}^{x_0}={x_k}^{x_i}$ ($1 \leq i<k\leq n-1$)},  \\
    \text{${x_k}^{x_0x_0}={x_k}^{x_0x_i}$ ($1 \leq i, k\leq n-1$ and $k-1\leq i$)}, \\
    {x_1}^{x_0x_0x_0}={x_1}^{x_0 x_0 x_{n-1}}
  \end{array}
  \right\rangle \notag,
\end{align}
where $x^y$ denotes $y^{-1}x y$.
We will use these presentations in Section \ref{subsection_LM}.

In the proof of Theorem \ref{main_1}, we also use the following well-known fact:
\begin{theorem}[cf.~\cite{brown1987finiteness, brown1984infinite}] \label{BT-property}
  For $n\geq 2$, the following holds:
  \begin{enumerate}
    \item The group $F(n)$ has type {$\rm F_\infty$}, especially, finitely presented.
    \item The group $F(n)$ is one-ended.
    \item The commutator subgroup of $F(n)$ is simple.
    \item The abelianization of $F(n)$ is isomorphic to $\bZ^n$.
    \item The center of the group $F(n)$ is trivial.
  \end{enumerate}
\end{theorem}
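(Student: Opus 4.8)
The plan is to read all five items off the $n$-ary tree-pair model of $F(n)$ and the structure theory of Brown \cite{brown1987finiteness, brown1984infinite}, regarding (3)--(5) as the $n$-ary analogues of the classical facts about Thompson's group $F = F(2)$. For (1), I would invoke Brown's contractible complex: $F(n)$ acts on a contractible CW-complex built from the poset of $n$-ary forest pairs, with finite cell stabilizers and cocompact skeleta, and a Morse-theoretic analysis shows that the descending links are highly connected, with connectivity tending to infinity. This gives type $\mathrm{F}_\infty$ and, in particular, finite presentation; it is also the mechanism producing the two presentations recorded in \eqref{eq:F(n)-presentation}.

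Item (4) is a direct abelianization of the infinite presentation in \eqref{eq:F(n)-presentation}: the relators $x_i^{-1}x_j x_i = x_{j+n-1}$ $(i<j)$ become $x_j = x_{j+n-1}$ for every $j \ge 1$, which identifies each generator of index $\ge n$ with one of $x_1,\dots,x_{n-1}$ and leaves $x_0$ unconstrained, so $F(n)^{\mathrm{ab}} \cong \bZ^n$ with basis the images of $x_0,\dots,x_{n-1}$. Item (2) then follows from (4) together with general end theory: $F(n)$ is infinite, finitely presented and torsion-free, and by (4) its first Betti number is $n \ge 2$, so it is not two-ended. If it had infinitely many ends, Stallings' theorem would produce a splitting over a finite, hence (by torsion-freeness) trivial, subgroup; $F(n)$ would then be a nontrivial free product or an ascending HNN extension with trivial associated subgroups, and in either case would contain a copy of $\bZ * \bZ$. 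As $F(n)$ embeds in the group of piecewise-linear homeomorphisms of the interval, the Brin--Squier theorem forbids non-abelian free subgroups, so this cannot happen and $F(n)$ is one-ended.

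Items (3) and (5) are the technical core. For (3), I would establish simplicity of $[F(n),F(n)]$ by the classical commutator/transitivity argument for Thompson-type groups (in the spirit of Higman and Epstein): using the self-similarity of $F(n)$ and the resulting supply of elements supported on arbitrarily small cylinders of $\NN$, one shows that the normal closure in $[F(n),F(n)]$ of any nontrivial element is all of $[F(n),F(n)]$; this is exactly the content isolated by Brown \cite{brown1987finiteness}. For (5), a central element lies in $Z(F(n))$, whose intersection with the non-abelian simple group $[F(n),F(n)]$ is central there and hence trivial, so $Z(F(n))$ injects into $F(n)^{\mathrm{ab}} \cong \bZ^n$; a support argument then finishes, since any $g \neq \mathrm{id}$ moves some basic clopen set off itself and therefore fails to commute with a nontrivial element of $F(n)$ supported inside it, forcing $Z(F(n)) = 1$. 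I expect the genuine obstacle to be the simplicity in (3): once Brown's complex is available, isolating the commutator subgroup and running the transitivity argument is the only step with real content, while (1) rests on the descending-link connectivity estimates, which I would black-box via \cite{brown1987finiteness}.
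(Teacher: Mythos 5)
The paper does not actually prove Theorem \ref{BT-property}: it is stated as background and delegated wholesale to \cite{brown1987finiteness, brown1984infinite}, so the only fair comparison is with those sources and with the fragments of these facts the paper reuses elsewhere. Measured that way, your reconstruction is correct and, for the two items with real content, takes the same route: you black-box (1) via Brown's complex and (3) via the Higman--Epstein commutator/transitivity machinery, exactly as the citation does; your abelianization computation for (4) from \eqref{eq:F(n)-presentation} is the standard one, and your disjoint-support argument for (5) is essentially identical to the argument the paper itself runs when proving triviality of the center of the $n$-adic Lodha--Moore groups. The one genuinely different route is (2): you deduce one-endedness from Stallings' theorem together with Brin--Squier (no non-abelian free subgroups in ${\rm PL}_+(I)$), a clean self-contained dynamical argument, whereas the cited references --- and the paper itself, in Theorem \ref{BT-cohomology} --- obtain it from the vanishing $H^1(F(n);\bZ[F(n)])=0$ via the ascending HNN decomposition $F(n)\cong F(n)*_{x_0}$; your version is softer, the cohomological one costs more machinery but delivers the vanishing in all degrees that the rest of the paper needs anyway. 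One caution on (3) for $n>2$: since $F(n)^{\mathrm{ab}}\cong\bZ^n$ while the germs at the two ends of $\NN$ account for only a $\bZ^2$ quotient, the subgroup of elements supported away from the endpoints strictly contains the commutator subgroup, so the Epstein-type argument most directly gives simplicity of the derived subgroup of that supported subgroup, and identifying it with $F(n)'$ (equivalently, checking the remaining $n-2$ characters die on the supported subgroup's derived series) is a genuine extra step; this is precisely the piece you, like the paper, are taking from Brown, and it is worth flagging explicitly rather than folding into ``the classical transitivity argument.''
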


\subsection{The Lodha--Moore groups and their $n$-adic generalizations} \label{subsection_LM}
The Lodha--Moore group was first defined as a group consisting of piecewise projective homeomorphisms of the real line and then showed that it is isomorphic to a subgroup of $\Homeo(\mathfrak{C}_2)$ \cite{lodha2016nonamenable}.
However, to the best of our knowledge, its generalization is only defined as a subgroup of $\Homeo(\NN)$.
Hence, in this paper, we also review it following \cite{kodama2023n} and define some new groups, which are generalizations of some groups that appeared in \cite{zaremsky2016hnn}.

Fix $n\geq 2$.
We first define an important element in $\Homeo(\NN)$.
This map of course depends on $n$, but we omit it for the sake of notational simplicity.
\begin{definition}
  The map $y$ and its inverse map $y^{-1}$ is defined recursively as follows:
  \begin{align*}
    y\colon \NN    & \to \NN              & y^{-1}\colon \NN        & \to \NN             \\
    y(00\zeta)     & =0y(\zeta)           & y^{-1}(0\zeta)          & =00y^{-1}(\zeta)    \\
    y(0k\zeta)     & =k\zeta              & y^{-1}(k\zeta)          & =0k\zeta            \\
    y(0(n-1)\zeta) & =(n-1)0y^{-1}(\zeta) & y^{-1}((n-1)0\zeta)     & =0(n-1)y(\zeta)     \\
    y(k\zeta)      & =(n-1)k\zeta         & y^{-1}((n-1)k\zeta)     & =k\zeta             \\
    y((n-1)\zeta)  & =(n-1)(n-1)y(\zeta)  & y^{-1}((n-1)(n-1)\zeta) & =(n-1)y^{-1}(\zeta)
  \end{align*}
  where $k$ is in $\{1, \dots, n-2\}$.
\end{definition}
Using this map, for $\alpha \in \Nn$, define the map $y_\alpha$ by  setting
\begin{align*}
  y_\alpha(\xi) & =
  \left \{
  \begin{array}{cc}
    \alpha y(\eta), & \xi=\alpha\eta    \\
    \xi,            & \mbox{otherwise}.
  \end{array}
  \right.
\end{align*}
\begin{definition}
  We define four groups as follows:
  \begin{align*}
    \G{n}   & \coloneqq \langle x_0, x_1, \dots, x_{n-1}, y_{(n-1)0} \rangle,                   \\
    \yG{n}  & \coloneqq \langle x_0, x_1, \dots, x_{n-1}, y_{(n-1)0}, y_{0} \rangle,            \\
    \Gy{n}  & \coloneqq \langle x_0, x_1, \dots, x_{n-1}, y_{(n-1)0}, y_{(n-1)} \rangle,        \\
    \yGy{n} & \coloneqq \langle x_0, x_1, \dots, x_{n-1}, y_{(n-1)0}, y_{0}, y_{(n-1)} \rangle.
  \end{align*}
  We call these groups the \textit{$n$-adic Lodha--Moore groups}.
  If $n=2$, they are called the \textit{Lodha--Moore groups}.
\end{definition}
In \cite{kodama2023n}, the first author investigated only $\G{n}$.
In the proof of Theorem \ref{main_2}, we use the following facts from among them:
\begin{theorem}[\cite{kodama2023n}] \label{LM-property}
  For $n\geq 2$, the following holds:
  \begin{enumerate}
    \item The group $\G{n}$ is finitely presented.
    \item The group $\G{n}$ is one-ended.
    \item The commutator subgroup of $\G{n}$ is simple. \label{LM-commutator}
    \item The abelianization of $\G{n}$ is isomorphic to $\bZ^{n+1}$.
    \item The center of the group $\G{n}$ is trivial. \label{LM-center}
  \end{enumerate}
\end{theorem}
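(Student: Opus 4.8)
The plan is to derive all five statements from the natural action of $\G{n}$ on the Cantor set $\NN$ by homeomorphisms, supplemented by an explicit finite presentation extending the Brown--Thompson presentation \eqref{eq:F(n)-presentation}. Since $F(n) \le \G{n}$ is the subgroup generated by the $x_i$, the idea is to record how the single extra generator $y \coloneqq y_{(n-1)0}$ interacts with the $x_i$ and then to propagate the known structure of $F(n)$ (Theorem \ref{BT-property}) across this one-generator extension.

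For (1) and (4) I would write down a finite presentation directly, mirroring the original Lodha--Moore presentation of $G_0$. Concretely, starting from \eqref{eq:F(n)-presentation} one adjoins $y$ together with the relations encoding its recursive (self-similar) definition and its commutation and conjugation behaviour against the $x_i$ supported off the cone at $(n-1)0$; a Tietze reduction to finitely many generators and relations then gives (1). Abelianizing this presentation yields (4): the $F(n)$-relations collapse the $x_i$ onto the classes $\{x_0, \dots, x_{n-1}\}$, reproducing the $\bZ^{n}$ of Theorem \ref{BT-property}(4), while every $y$-relation becomes trivial in the abelianization, so that $y$ survives as one further independent generator of infinite order, giving $\bZ^{n+1}$. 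The only real content here is verifying that no $y$-relation secretly expresses $y$ as a word in the $x_i$ (which would drop the rank) — a finite check on the relations.

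The conceptual heart is (3), for which the plan is dynamical. The generators of $\G{n}$ act with explicit local (germ) formulas, from which one reads off that the action on $\NN$ is \emph{micro-supported} (every nonempty clopen $U \subseteq \NN$ carries a nontrivial element of $\G{n}$ supported in $U$) and that, away from the two globally fixed boundary points $0^\infty$ and $(n-1)^\infty$, the commutator subgroup acts with dense orbits. Feeding these facts into the classical simplicity criteria for micro-supported group actions (in the tradition of Higman and Epstein) forces every nontrivial subgroup normalized by $\G{n}$ to contain $[\G{n},\G{n}]$, and $[\G{n},\G{n}]$ itself to be simple. I expect this to be the main obstacle: the $n$-adic case is genuinely more delicate than the dyadic one, since the generalized map $y$ has a more intricate germ structure at $0^\infty$ and $(n-1)^\infty$, and it is precisely these local computations — establishing micro-support and the required transitivity/comparison hypotheses — that must be pushed through.

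Finally, (5) and (2) come out as corollaries. For (5), a central element would centralize the micro-supported action on $\NN$, and the centralizer in $\Homeo(\NN)$ of such an action is trivial; as the action is faithful this forces \ref{LM-center}. For (2), $\G{n}$ is infinite and torsion-free, so if it had more than one end it would either be two-ended, hence isomorphic to $\bZ$ and abelian, contradicting that its commutator subgroup is infinite and simple by \ref{LM-commutator}; or infinitely-ended, hence by Stallings (using torsion-freeness) a nontrivial free product. In the latter case the infinite simple normal subgroup $[\G{n},\G{n}]$ would, by the Kurosh subgroup theorem, be contained in a conjugate of a single free factor, and being normal it would then lie in the normal core of that proper factor, which is trivial — again a contradiction. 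Hence $\G{n}$ is one-ended.
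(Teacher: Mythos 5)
Note first that the paper itself does not prove Theorem \ref{LM-property}: all five items are imported from \cite{kodama2023n}, so the only in-paper argument available for comparison is the reproof of item (5), given in Section 2 for every group in $\LM$ via Lemma \ref{lemma_dense} (a central element must fix the dense set $D(n)$ because each $s\overline{0}$ occurs as the left endpoint of the support of some element of $F(n)$, and the elements preserve the lexicographic order). Your outline is correct and follows the standard route of the cited reference: an infinite presentation reduced by Tietze transformations for (1), abelianization of that presentation for (4), a micro-supported simplicity argument for (3), and formal deductions of (5) and (2). Your centralizer argument for (5) is a genuine and slightly more robust alternative to the paper's order-theoretic one, since it uses only that every nonempty clopen subset of $\NN$ contains a cone $\alpha\NN$ supporting a nontrivial element (e.g.\ $\x{0}{\alpha}$) and that the action is faithful; your Stallings--Kurosh argument for (2) is likewise valid and independent of the source. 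Three places where the sketch is thinner than the actual work. For (1), the Tietze reduction \emph{is} the content: one must verify that the conjugation relations express every $y_\alpha$ in terms of $y_{(n-1)0}$ and the $x_i$, and that all but finitely many relations become consequences. For (4), the relations of type (4) do not become trivial after abelianizing; they force the class of $\x{0}{\alpha}$ to vanish for non-constant $\alpha$, and one must check this already holds in the abelianization of $F(n)$ so that the $\bZ^n$ coming from Theorem \ref{BT-property}(4) is not collapsed --- this is exactly the computation the paper carries out for the other three groups in Propositions \ref{Prop_abyG}--\ref{Prop_abyGy}. For (3), the Higman--Epstein machinery applied to a micro-supported action typically yields simplicity of the \emph{second} derived subgroup, so an additional, non-formal step is needed to conclude that $\G{n}'$ itself is simple (either $\G{n}''=\G{n}'$ or a direct argument that every nontrivial subgroup normalized by $\G{n}'$ contains $\G{n}'$); this is the step carried out in \cite{kodama2023n}, following \cite{burillo2018commutators} in the dyadic case, and your sketch should acknowledge it explicitly rather than subsuming it under ``transitivity/comparison hypotheses''.
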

In the rest of this section, we study mainly the other three groups.
\subsubsection{Presentations of the $n$-adic Lodha--Moore groups}
Following \cite{kodama2023n}, we recall some infinite generating sets and sets of relations for $n$-adic Lodha--Moore groups.
Similar to Thompson's groups, their infinite presentations are also useful to study their algebraic properties.

Let $i \in \{0, \dots, n-2\}$ and $\alpha \in \Nn$.
Define the map $\x{i}{\alpha}\colon \NN \to \NN$ by
\begin{align*}
  \x{i}{\alpha}(\zeta) & =
  \begin{cases}
    \alpha x_i(\eta) & (\zeta=\alpha \eta)       \\
    \zeta            & (\zeta \neq \alpha \eta),
  \end{cases}
\end{align*}
and we define
\begin{align*}
  X(n) & \coloneqq
  \left\{ \x{i}{\alpha} \mid i = 0, \dots, n-2, \alpha \in \Nn \right\}.
\end{align*}
Note that for $k\geq0$, we have $\x{i}{(n-1)^k}=x_{k(n-1)+i}$.
It is well known that each $\x{i}{\alpha}$ is also in $F(n)$, and hence $\x{i}{\alpha}$ is represented by a word on $\{x_0, x_1, \dots \}$.
For any $\x{i}{\alpha}$, choose one such word (for instance, take its normal form) and write it as $w(\x{i}{\alpha})$.
Let $R_{F(n)}$ be the set of relations
\begin{align*}
  \{x_i^{-1}x_jx_{i}=x_{i+(n-1)}, x=w(x) \mid 0\leq i<j, x\in X(n)\}.
\end{align*}
Then the group presented by $\langle X(n) \mid R_{F(n)} \rangle$ is isomorphic to $F(n)$.

Using $y_\alpha$, we define the four sets as follows:
\begin{align*}
  Y_0(n)     & \coloneqq
  \left\{ y_\alpha \;\middle|\;
  \begin{array}{l} \alpha \in \Nn,                                            \\
    \mbox{$\alpha \neq \epsilon, 0^i, (n-1)^i $ for any $i \geq1$} \\
    \mbox{the sum of each number in  $\alpha$ is equal to $0 \bmod {n-1}$}
  \end{array}
  \right\},              \\
  {}_yY(n)   & \coloneqq
  \left\{ y_\alpha \;\middle|\;
  \begin{array}{l} \alpha \in \Nn,                                            \\
    \mbox{$\alpha \neq \epsilon, 0^i, (n-1)^i $ for any $i \geq1$} \\
    \mbox{the sum of each number in  $\alpha$ is equal to $0 \bmod {n-1}$}
  \end{array}
  \right\},              \\
  Y_y(n)     & \coloneqq
  \left\{ y_\alpha \;\middle|\;
  \begin{array}{l} \alpha \in \Nn,                                  \\
    \mbox{$\alpha \neq \epsilon, 0^i$ for any $i \geq1$} \\
    \mbox{the sum of each number in  $\alpha$ is equal to $0 \bmod {n-1}$}
  \end{array}
  \right\},              \\
  {}_yY_y(n) & \coloneqq
  \left\{ y_\alpha \;\middle|\;
  \begin{array}{l} \alpha \in \Nn, \\
    \mbox{the sum of each number in  $\alpha$ is equal to $0 \bmod {n-1}$}
  \end{array}
  \right\},              \\
\end{align*}
Then $n$-adic Lodha--Moore groups $\G{n}$, $\yG{n}$, $\Gy{n}$, and $\yGy{n}$ are also generated by $X(n)\cup Y_0(n)$, $X(n)\cup {}_yY(n)$, $X(n)\cup Y_y(n)$, and $X(n)\cup {}_yY_y(n)$, respectively.

For these generating sets, we claim that the following sets of relations give presentations of $n$-adic Lodha--Moore groups:
\begin{enumerate}
  \item the relations of $F(n)$ in $R_{F(n)}$;
  \item $y_\beta\x{i}{\alpha}=\x{i}{\alpha}y_{\x{i}{\alpha}(\beta)}$ for all $i \in \{0, \dots, n-2\}$ and $\alpha, \beta \in \Nn$ such that $y_\beta \in Y(n)$ and $\x{i}{\alpha}(\beta)$ is defined;
  \item $y_\alpha y_\beta =y_\beta y_\alpha$ for all $\alpha, \beta \in \Nn$ such that $y_\alpha, y_\beta \in Y(n)$, $\alpha$ is not a prefix of $\beta$, and vice versa;
  \item $y_\alpha=\x{0}{\alpha} y_{\alpha0} y_{\alpha(n-1)0}^{-1}y_{\alpha(n-1)(n-1)}$ for all $\alpha \in \Nn$ such that $y_\alpha \in Y(n)$,
\end{enumerate}
where $Y(n)$ is one of $Y_0(n)$, ${}_yY(n)$, $Y_y(n)$, or ${}_yY_y(n)$, and we write $R_0(n)$, ${}_yR(n)$, $R_y(n)$, and ${}_yR_y(n)$ for the collection of these relations, respectively.
Since $\x{i}{\alpha}$ changes only a finite length prefix of each element in $\NN$, it naturally induces a partial action of $\x{i}{\alpha}$ on $\Nn$.
If $\x{i}{\alpha}$ can acts on $\beta$, we say that \textit{$\x{i}{\alpha}(\beta)$ is defined}.
For example, $x_0((n-1))$ is defined, and $x_0(0)$ is not defined.
\begin{theorem}[{cf.~\cite[Corollary 3.38]{kodama2023n}}]
  The following holds:
  \begin{itemize}
    \item the group $\G{n}$ is isomorphic to $\langle X(n)\cup Y_0(n) \mid R_0(n) \rangle$;
    \item the group $\yG{n}$ is isomorphic to $\langle X(n)\cup {}_yY(n) \mid {}_yR(n) \rangle$;
    \item the group $\Gy{n}$ is isomorphic to $\langle X(n)\cup Y_y(n) \mid R_y(n) \rangle$; and
    \item the group $\yGy{n}$ is isomorphic to $\langle X(n)\cup {}_yY_y(n) \mid {}_yR_y(n) \rangle$.
  \end{itemize}
  \begin{sproof}
    The same proof works for all four groups.
    We only give a sketch of proof for $\yGy{n}$.
    See \cite{kodama2023n} for $\G{n}$, and see \cite{lodha2016nonamenable} for $\G{2}$ and $\yGy{2}$.

    By using relations (2) and (4), any word can be rewritten into a word in a concatenation of words on $X(n)$ and ${}_yY_y(n)$.
    In fact, if a word $w$ on $X(n)\cup {}_yY_y(n)$ represents the identity, then by relations (2), (3), and (4), the word can be rewritten into a word on $X(n)$.
    Intuitively, since $w$ represents the identity, it follows that enough use of relation (4) induces cancellations of $y_\alpha$s.
    Note that the correspondence between the recursive definition of the map $y$ and the relation (4).
    Since $\langle X(n) \mid R_{F(n)} \rangle$ is isomorphic to $F$, a word on $X(n)$ representing the identity can be rewritten into the empty word by relations (1).
  \end{sproof}
\end{theorem}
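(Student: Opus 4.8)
The plan is, for each of the four groups, to build the evident homomorphism from the presented group onto the concrete homeomorphism group and then to prove that it is injective by a normal-form argument. Since the four presentations differ only in which indices $\alpha$ are admitted for the generators $y_\alpha$ (namely, whether $\alpha$ may have the form $0^i$ or $(n-1)^i$), I would run the argument once---say for $\yGy{n}=\langle X(n)\cup {}_yY_y(n)\mid {}_yR_y(n)\rangle$---and then indicate the uniform modifications. Write $\Gamma$ for the presented group and $G\leq\Homeo(\NN)$ for the concrete one, and define $\phi\colon\Gamma\to G$ by sending each abstract generator to the homeomorphism of $\NN$ of the same name. To check that $\phi$ is well defined I would verify that each family of relations holds as an identity of homeomorphisms: relations (1) already hold, being the defining relations of $F(n)$ among the $\x{i}{\alpha}$; relations (2) record how $\x{i}{\alpha}$ permutes the prefixes on which the $y_\beta$ act; relations (3) hold because $y_\alpha$ and $y_\beta$ have disjoint supports when neither $\alpha$ nor $\beta$ is a prefix of the other; and relation (4) is exactly the recursion defining $y$, transcribed to the cone below $\alpha$. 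Surjectivity of $\phi$ is immediate from the choice of generators.

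The substance is the injectivity of $\phi$, which I would establish by the two-stage rewriting indicated in the sketch. Let $w$ be a word on $X(n)\cup {}_yY_y(n)$ with $\phi(w)=\mathrm{id}$. In the first stage I would apply relations (2) repeatedly to move every letter $\x{i}{\alpha}^{\pm1}$ to the left of every $y$-letter, relabelling the index of each $y$-letter according to the partial action of $\x{i}{\alpha}$ (letters with disjoint supports simply commute); after finitely many steps $w$ becomes a product $u\cdot v$, where $u$ is a word on $X(n)$ and $v$ is a word on the $y$-generators. In the second stage I would normalize $v$ using relations (3) and (4): relation (4) is an expansion move rewriting $y_\alpha$ as a product of an $F(n)$-letter with $y$-generators whose indices $\alpha0,\ \alpha(n-1)0,\ \alpha(n-1)(n-1)$ are strictly longer, while relation (3) reorders $y$-generators with pairwise incomparable indices. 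Expanding the shorter indices to a common depth, pushing the newly created $F(n)$-letters back to the left by relations (2), and collecting exponents, I would bring $v$ to a normal form in which the surviving $y$-generators are indexed by an antichain.

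The key claim is then that if $\phi(u\cdot v)=\mathrm{id}$ with $v$ in this normal form, the word $v$ must be empty. Indeed $\phi(u)\phi(v)=\mathrm{id}$ and $\phi(u)\in F(n)$ force $\phi(v)\in F(n)$; but when the indices of $v$ form an antichain, $\phi(v)$ acts on the pairwise disjoint cones below those indices, restricting on each cone to a power of the copy of $y$ acting there, and such a power lies in $F(n)$ only when its exponent is zero (since $y$ has infinite order and no nonzero power of $y$ is a prefix-exchange map). Hence every exponent vanishes and $v$ is empty. With $v$ removed, $u$ is a word on $X(n)$ with $\phi(u)=\mathrm{id}$, and the isomorphism $\langle X(n)\mid R_{F(n)}\rangle\cong F(n)$ recalled above lets me reduce $u$ to the empty word using relations (1). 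Thus $\phi$ is injective and the four isomorphisms follow.

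The step I expect to be the \emph{main obstacle} is the normalization of the $y$-part in the second stage: controlling the cancellations forced by relation (4), ensuring the rewriting terminates, and proving that the resulting antichain normal form is genuinely reduced. This is the combinatorial heart of the Lodha--Moore argument, and the complete details for $\G{n}$ and for $n=2$ are in \cite{kodama2023n} and \cite{lodha2016nonamenable}. For the remaining three groups the only new point is to check that every rewriting step keeps all $y$-indices within the prescribed generating set; this membership is governed solely by whether $\alpha$ may equal $0^i$ or $(n-1)^i$, and a routine check shows that relations (2)--(4) preserve it, so the same argument goes through for all four groups.
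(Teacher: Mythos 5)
Your proposal is correct and follows essentially the same route as the paper's own sketch: separate the word into an $X(n)$-part and a $y$-part via relations (2) and (4), cancel the $y$-part using (3) and (4) together with the correspondence between relation (4) and the recursion defining $y$, and finish with the presentation $\langle X(n)\mid R_{F(n)}\rangle\cong F(n)$. You supply somewhat more detail (the antichain normal form and the observation that no nonzero local power of $y$ lies in $F(n)$), but the combinatorial core is deferred to \cite{kodama2023n} and \cite{lodha2016nonamenable} exactly as in the paper.
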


\subsubsection{The abelianizations of the $n$-adic Lodha--Moore groups and commutator subgroups}
In this section, we generalize the results in \cite{burillo2018commutators, zaremsky2016hnn}.
\begin{theorem}[{cf.~\cite[Theorem 4.10]{kodama2023n}, \cite[Corollary 1.3]{zaremsky2016hnn}, and \cite[Lemma 3.1]{burillo2018commutators}}]
  The abelianizations of the $n$-adic Lodha--Moore groups are all isomorphic to $\mathbb{Z}^{n+1}$.
  \begin{proof}
    Let $a\colon F(n) \to \mathbb{Z}^n$ be the surjective homomorphism defined in \cite[section 4D]{brown1987finiteness}.
    Note that $a(x_0)=(1, 0,\dots, 0), a(x_1)=(0, 1, 0, \dots, 0), \dots, a(x_{n-1})=(0, \dots, 0, 1)$ and its kernel is the commutator subgroup $F(n)^\prime$.

    Since we already know that the abelianization of $\G{n}$ is isomorphic to $\bZ^{n+1}$ \cite[Theorem 4.10]{kodama2023n}, we give proofs for the remaining three groups by the following propositions:
    \begin{proposition}\label{Prop_abyG}
      The map $\{x_0, \dots, x_{n-1}, y_{(n-1)0}, y_0\} \to \bZ^{n-1}\oplus \bZ^2$ given by
      \begin{align*}
         & x_0 \mapsto (0, \dots, 0, 1, 0, 0), &  & x_i \mapsto (\bm{e}_i, 0, 0), &  & y_{(n-1)0} \mapsto (\bm{0}, 1, 0), &  & y_0 \mapsto (\bm{0}, 0, 1)
      \end{align*}
      where $i \in \{1, \dots, n-1\}$ and $\bm{e}_i$ is the standard basis, extends to a surjective homomorphism ${}_y\pi\colon\yG{n} \to \bZ^{n+1}$, and its kernel is exactly the commutator subgroup $\yG{n}^\prime$.
      \begin{proof}
        Let $p_1$ be the homeomorphism defined by
        \begin{align*}
          p_1\colon \mathbb{Z}^n \to \mathbb{Z}^{n-1}; (a_1, a_2, \dots, a_{n-1}, a_n)\mapsto (a_2, \dots, a_{n-1}, a_1+a_n).
        \end{align*}
        Define the map $X(n)\cup {}_yY(n) \to \mathbb{Z}^{n+1}$ by setting
        \begin{align*}
           & \x{i}{\alpha} \mapsto (p_1 \circ a(\x{i}{\alpha}), 0, 0), &  & y_{\alpha} \mapsto
          \begin{cases}
            \mbox{$(\bm{0}, 0, 1)$ if $\alpha=0^m$ for some $m \geq 1$}, \\
            \mbox{$(\bm{0}, 1, 0)$ if $\alpha$ is not constant}.
          \end{cases}
        \end{align*}
        Observe that if we restrict this map to be on the set $\{x_0, \dots, x_{n-1}, y_{(n-1)0}, y_0\}$, then it is the same as the map defined in the claim of this proposition.
        Since the map $a$ is surjective, we note that the map $p_1 \circ a \colon F(n) \to \mathbb{Z}^{n-1}$ is also a surjective homomorphism.

        To see that this map extends to ${}_y\pi\colon \yG{n} \to \bZ^{n-1}$, we should compute $p_1 \circ a(\x{0}{\alpha})$.
        From the definition of $a$, if $\alpha$ is not constant, then $a(\x{0}{\alpha})=\bm{0}$, and hence $p_1\circ a(\x{0}{\alpha})=(\bm{0}, 0, 0)$.
        If $\alpha=0^i$ for some $i\geq 1$, then we have $a(\x{0}{0^i})=(1, 0, \dots, 0, -1) \in \bZ^n$ and hence $p_1 \circ a(\x{0}{0^i})=\bm{0}$.
        We also note that the partial action of $F(n)$ on $\Nn$ stabilizes the sets $\{0, 0^2, \cdots\} \subset \Nn$ and $\{\alpha \in \Nn \mid \mbox{$\alpha$ is not constant} \}$.
        From these facts, the map $X(n)\cup {}_yY(n) \to \mathbb{Z}^{n-1}$ extends to a surjective homomorphism ${}_y\pi\colon\yG{n} \to \bZ^{n+1}$.

        We claim that the set $\{x_1\yG{n}^\prime, \dots, x_{n-1}\yG{n}^\prime, y_{(n-1)0}\yG{n}^\prime, y_0\yG{n}^\prime\}$ generates the abelianization ${\yG{n}}/{\yG{n}^\prime}$.
        This follows from the following:
        for $g \in \Gy{n}$, we write $\ov{g}$ for the element $g\yG{n}^\prime \in {\yG{n}}/{\yG{n}^\prime}$.
        Observe that by direct calculations, we have $y_{00}=x_0 y_0 x_0^{-1}$ and
        \begin{align*}
          y_{0(n-1)0}=(x_0^2x_{2(n-1)}x_{n-1}^{-1}x_0^{-1})y_{0(n-1)(n-1)}(x_0^2x_{2(n-1)}x_{n-1}^{-1}x_0^{-1})^{-1}.
        \end{align*}
        These imply that $\ov{y_{00}}=\ov{y_0}$ and $\ov{y_{0(n-1)0}}=\ov{y_{0(n-1)(n-1)}}$ hold.
        Hence, since we have $y_{0}=\x{0}{0}y_{00}y_{0(n-1)0}^{-1}y_{0(n-1)(n-1)}$ by relation (4), the element $\ov{\x{0}{0}}$ is the identity element.
        Again by direct calculation, we have
        \begin{align*}
          \x{0}{0}=x_0^2 x_{n-1}^{-1}x_0^{-1}.
        \end{align*}
        This implies that $\ov{x_0}=\ov{x_{n-1}}$ holds.
        Therefore, the abelianization of $\yG{n}$ is generated by $\{\ov{x_1}, \dots, \ov{x_{n-1}}, \ov{y_{(n-1)0}}, \ov{y_0}\}$.

        Since $\{{}_y\pi(x_1), \dots, {}_y\pi(x_{n-1}), {}_y\pi(y_{(n-1)0}), {}_y\pi(y_0)\}$ is a basis of $\bZ^{n+1}$, ${}_y\pi$ induces an abelianization map ${\yG{n}}/{\yG{n}^\prime} \to \bZ^{n+1}$.
      \end{proof}
    \end{proposition}
    \begin{proposition}\label{Prop_abGy}
      The map $\{x_0, \dots, x_{n-1}, y_{(n-1)0}, y_{n-1}\} \to \bZ^{n-1}\oplus \bZ^2$ given by
      \begin{align*}
         & x_i \mapsto (\bm{e}_{i+1}, 0, 0), &  & x_{n-1} \mapsto (\bm{0}, 0, 0), &  & y_{(n-1)0} \mapsto (\bm{0}, 1, 0), &  & y_{n-1} \mapsto (\bm{0}, 0, 1)
      \end{align*}
      where $i \in \{0, \dots, n-2\}$ and $\bm{e}_i$ is the standard basis, extends to a surjective homomorphism $\pi_y \colon\Gy{n} \to \bZ^{n+1}$, and its kernel is exactly the commutator subgroup $\Gy{n}^\prime$.
      \begin{proof}
        Let $p_{n-1}$ be the homomorphism defined by
        \begin{align*}
          p_{n-1}\colon \mathbb{Z}^n \to \mathbb{Z}^{n-1}; (a_1, \dots, a_{n-1}, a_n)\mapsto (a_1, \dots, a_{n-1}).
        \end{align*}
        Similar to the proof of Proposition \ref{Prop_abyG}, we first consider a map on an infinite generating set of $\Gy{n}$ and then extend it.
        Define the map $X(n)\cup Y_y(n) \to \mathbb{Z}^{n+1}$ by setting
        \begin{align*}
           & \x{i}{\alpha} \mapsto (p_{n-1} \circ a(\x{i}{\alpha}), 0, 0), &  & y_{\alpha} \mapsto
          \begin{cases}
            \mbox{$(\bm{0}, 0, 1)$ if $\alpha=(n-1)^m$ for some $m \geq 1$}, \\
            \mbox{$(\bm{0}, 1, 0)$ if $\alpha$ is not constant}.
          \end{cases}
        \end{align*}
        If we restrict this map to be on the finite generating set, then it is the same as the map defined in the claim of this proposition.
        From the definition of the map $a$, we note that if $\alpha$ is not constant, then $a(\x{0}{\alpha})=\bm{0}$.
        Also, if $\alpha=(n-1)^i$ for some $i\geq1$, then $a(\x{0}{(n-1)^i})=(0, \dots, 0, 1) \in \bZ^n$.
        Hence, the map $X(n)\cup Y_y(n) \to \mathbb{Z}^{n-1}$ extends to the map $\pi_y \colon\Gy{n} \to \bZ^{n+1}$.

        For $g \in \Gy{n}$, we write $\ov{g}$ for the element $g\Gy{n}^\prime \in {\Gy{n}}/{\Gy{n}^\prime}$.
        By direct calculations, we have $y_{(n-1)(n-1)(n-1)}=x_0^{-2}y_{n-1}x_0^2$ and $y_{(n-1)0}=x_0y_{(n-1)(n-1)0}x_0^{-1}$.
        Hence $y_{(n-1)}=\x{0}{(n-1)}y_{(n-1)0}y_{(n-1)(n-1)0}^{-1}y_{(n-1)(n-1)(n-1)}$ induces that $\ov{\x{0}{(n-1)}}$ is the identity element.
        Note that $\x{0}{(n-1)}=x_{n-1}$.
        Since $\{\pi_y(x_0), \dots, \pi_y(x_{n-2}), \pi_y(y_{(n-1)0}), \pi_y(y_{n-1})\}$ is a basis of $\bZ^{n+1}$, the $\pi_y$ induces an abelianization map ${\Gy{n}}/{\Gy{n}^\prime} \to \bZ^{n+1}$.
      \end{proof}
    \end{proposition}
    \begin{proposition}\label{Prop_abyGy}
      The map $\{x_0, \dots, x_{n-1}, y_{(n-1)0}, y_0, y_{n-1}\} \to \bZ^{n-2}\oplus \bZ^3$ given by
      \begin{align*}
         & x_0 \mapsto (\bm{0}, 0, 0, 0)         &  & x_i \mapsto (\bm{e}_{i}, 0, 0, 0), &  & x_{n-1} \mapsto (\bm{0}, 0, 0, 0), & \\
         & y_{(n-1)0} \mapsto (\bm{0}, 1, 0, 0), &  & y_{0} \mapsto (\bm{0}, 0, 1, 0),   &  & y_{n-1} \mapsto (\bm{0}, 0, 0, 1)
      \end{align*}
      where $i \in \{1, \dots, n-2\}$ and $\bm{e}_i$ is the standard basis, extends to a surjective homomorphism ${}_y\pi_y \colon\yGy{n} \to \bZ^{n+1}$, and its kernel is exactly the commutator subgroup $\yGy{n}^\prime$.
      \begin{proof}
        Let $p_{1, n-1}$ be the homomorphism defined by
        \begin{align*}
          p_{1, n-1}\colon \mathbb{Z}^n \to \mathbb{Z}^{n-2}; (a_1, a_2, \dots, a_{n-1}, a_n)\mapsto (a_2, \dots, a_{n-1}).
        \end{align*}
        Similar to the proofs of Propositions \ref{Prop_abyG} and \ref{Prop_abGy}, define the map $X(n)\cup {}_yY_y(n) \to \bZ^{n+1}$ by setting
        \begin{align*}
           & \x{i}{\alpha} \mapsto (p_{1, n-1} \circ a(\x{i}{\alpha}), 0, 0, 0), &  & y_{\alpha} \mapsto
          \begin{cases}
            \mbox{$(\bm{0}, 0, 1, 0)$ if $\alpha=0^m$ for some $m \geq 1$},     \\
            \mbox{$(\bm{0}, 0, 0, 1)$ if $\alpha=(n-1)^m$ for some $m \geq 1$}, \\
            \mbox{$(\bm{0}, 1, 0, 0)$ if $\alpha$ is not constant}.
          \end{cases}
        \end{align*}
        If we restrict this map to be on the finite generating set, then it is the same as the map defined in the claim of this proposition.
        From the proofs of Propositions \ref{Prop_abyG} and \ref{Prop_abGy}, this map extends to the map ${}_y\pi_y \colon \yGy{n} \to \bZ^{n+1}$.
        Since ${}_yR(n)$ and $R_y(n)$ are subsets of ${}_yR_y(n)$, we have that $x_0\yGy{n}^\prime$ and $x_{n-1}\yGy{n}^\prime$ are the identity element in ${\yGy{n}}/{\yGy{n}^\prime}$.
        Since $\{{}_y\pi_y(x_1), \dots, {}_y\pi_y(x_{n-2}), {}_y\pi_y(y_{(n-1)0}), {}_y\pi_y(y_0), {}_y\pi_y(y_{n-1})\}$ is a basis of $\bZ^{n+1}$, the map ${}_y\pi_y$ induces an abelianization map ${\yGy{n}}/{\yGy{n}^\prime} \to \bZ^{n+1}$.
      \end{proof}
    \end{proposition}
    We have the desired result.
  \end{proof}
\end{theorem}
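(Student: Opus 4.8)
The plan is to reduce the statement to three separate computations, one for each of the groups $\yG{n}$, $\Gy{n}$, and $\yGy{n}$, since Theorem~\ref{LM-property} already records that the abelianization of $\G{n}$ is $\bZ^{n+1}$. In each case I would work with the infinite presentation $\langle X(n)\cup Y(n)\mid\ldots\rangle$ supplied above, and exhibit an explicit surjection onto $\bZ^{n+1}$ whose kernel is the commutator subgroup; once such a map is produced, the abelianization is forced to be $\bZ^{n+1}$.

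To build the map I would start from Brown's abelianization homomorphism $a\colon F(n)\to\bZ^n$, whose values on the $x_i$ are recalled above, and compose it with a linear projection $p\colon\bZ^n\to\bZ^{n-1}$ (or $\bZ^{n-2}$ in the two-handle case) chosen to discard exactly the coordinates that the new $y$-generators are about to replace. On the $y$-part I would send each $y_\alpha$ to one of finitely many basis vectors according to the \emph{type} of the index word $\alpha$: whether $\alpha$ equals $0^m$, equals $(n-1)^m$, or is non-constant. This type-based recipe is what makes the four infinite families of generators land in a rank-$(n+1)$ lattice.

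The substance of the argument is checking that this assignment respects the four families of defining relations. Relations (1) are absorbed by the fact that $p\circ a$ is already a homomorphism on $F(n)$, and relations (3) are automatic because the target is abelian. Relation (2), which abelianizes to $\ov{y_\beta}=\ov{y_{\x{i}{\alpha}(\beta)}}$, is the place where the type-based recipe must be justified: I would verify that the partial action of $F(n)$ on $\Nn$ preserves each of the three classes $\{0^m\}$, $\{(n-1)^m\}$, and the non-constant words, so that $\beta$ and $\x{i}{\alpha}(\beta)$ always receive the same value. Relation (4), which abelianizes to $\ov{y_\alpha}=\ov{\x{0}{\alpha}}+\ov{y_{\alpha0}}-\ov{y_{\alpha(n-1)0}}+\ov{y_{\alpha(n-1)(n-1)}}$, must then be checked by computing $p\circ a(\x{0}{\alpha})$ on each type of $\alpha$ and confirming the linear identity holds; this is where the particular choice of $p$ pays off, since one needs $p\circ a(\x{0}{0^i})=\bm{0}$ and the analogous vanishing for the projected coordinate.

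Finally I would show the map descends to an isomorphism on the abelianization by proving that the images of the \emph{finite} generating set already generate, and that there are exactly $n+1$ independent ones. Concretely, relation (4) together with short identities such as $y_{00}=x_0y_0x_0^{-1}$ forces $\ov{\x{0}{0}}$ (respectively $\ov{\x{0}{(n-1)}}$) to be trivial, and expanding $\x{0}{0}=x_0^2x_{n-1}^{-1}x_0^{-1}$ (respectively $\x{0}{(n-1)}=x_{n-1}$) then collapses one of the old $x$-generators---so that adjoining a new $y$-generator is compensated by killing an old $x$-generator, keeping the rank at $n+1$. I expect the main obstacle to be precisely this bookkeeping: verifying the type-preservation needed for relation (2), and then carrying out the finitely many explicit conjugation computations that make relation (4) collapse the right generators, so that the $n+1$ surviving images are seen to form a basis.
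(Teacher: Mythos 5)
Your proposal is correct and follows essentially the same route as the paper's proof: composing Brown's abelianization $a$ with a coordinate-discarding projection, assigning $y_\alpha$ a basis vector by the type of $\alpha$ ($0^m$, $(n-1)^m$, or non-constant), verifying relations (2) and (4) via type-preservation of the partial action and the vanishing of $p\circ a(\x{0}{\alpha})$, and then collapsing $\ov{\x{0}{0}}$ (resp.\ $\ov{\x{0}{(n-1)}}$) using $y_{00}=x_0y_0x_0^{-1}$ and $\x{0}{0}=x_0^2x_{n-1}^{-1}x_0^{-1}$ so that exactly $n+1$ generators survive and map to a basis. The remaining work is only the bookkeeping you already identify, carried out separately for each of the three groups exactly as in the paper.
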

\subsubsection{The center of the $n$-adic Lodha--Moore groups}
Let $D(n)$ be the set $\{s\overline{0} \mid s \in \Nn \}$, where $\overline{0}\coloneqq 00\cdots \in \NN$.
Consider a natural lexicographic order $<$ on $\NN$.
Then the following holds:
\begin{lemma}[{\cite[Lemma 4.16]{kodama2023n}}]\label{lemma_dense}
  For any $s\overline{0}$ in $D(n)$, there exists $x$ in $F(n)$ such that
  \begin{align*}
    \supp{x}=(s\overline{0}, \overline{(n-1)})=\{\xi \in \NN \mid s\overline{0}<\xi< \overline{(n-1)}\}
  \end{align*}
  holds, where $\supp{x}\coloneqq \{\xi \in \NN \mid x(\xi)\neq \xi\}$ and $\ov{(n-1)}=(n-1)(n-1) \cdots \in \NN$.
\end{lemma}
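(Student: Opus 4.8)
The plan is to realize the desired $x$ as a conjugate, inside $F(n)$, of a concrete ``model'' element whose support is the analogous interval based at a single--digit point, and then to transport that interval to $(s\overline{0},\overline{(n-1)})$ by an order--preserving element of $F(n)$. For $\alpha\in\Nn$ write $[\alpha]=\{\alpha\eta\mid \eta\in\NN\}$ for the associated cylinder; recall that every element of $F(n)$ is a finite ``prefix--replacement'' map, i.e.\ it is determined by two partitions of $\NN$ into the same number of cylinders together with the order--preserving bijection between them, and being order--preserving it necessarily fixes the minimum $\overline{0}$ and the maximum $\overline{(n-1)}$ of $\NN$. A first observation is that $x_0$ itself has support exactly $(\overline{0},\overline{(n-1)})$: reading off its defining tree pair, no cylinder of the domain partition is carried identically onto itself, so the only fixed points are $\overline{0}$ and $\overline{(n-1)}$. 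This already settles the degenerate case $s\overline{0}=\overline{0}$ (that is, $s=0^{i}$), where I take $x=x_0$.

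Next I would build, for each single digit $c$ with $1\le c\le n-1$, a model element $m_c\in F(n)$ with $\supp{m_c}=(c\overline{0},\overline{(n-1)})$. Here $\{\xi\mid \xi\ge c\overline{0}\}=[c]\cup[c+1]\cup\cdots\cup[n-1]$ is a union of top--level cylinders with minimum $c\overline{0}$ and maximum $\overline{(n-1)}$. For $c=n-1$ the set is the single cylinder $[n-1]$, and I take $m_{n-1}=x_{n-1}=\x{0}{(n-1)}$, which acts as the self--similar copy of $x_0$ inside $[n-1]$ and hence has support $((n-1)\overline{0},\overline{(n-1)})$. For $1\le c\le n-2$ I mimic the tree pair of $x_0$ on the block $\{[c],\dots,[n-1]\}$: subdivide the first cylinder $[c]$ into its $n$ children in the domain, subdivide the last cylinder $[n-1]$ into its $n$ children in the range, and match the two resulting equal--length lists of cylinders in order. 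A direct check shows that the leftmost piece $[c0]\to[c]$ fixes only $c\overline{0}$, the rightmost piece $[n-1]\to[(n-1)(n-1)]$ fixes only $\overline{(n-1)}$, and no intermediate cylinder is sent identically onto itself; hence $\supp{m_c}=(c\overline{0},\overline{(n-1)})$.

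It then remains to produce $g\in F(n)$ with $g(c\overline{0})=s\overline{0}$ for the appropriate $c$: since $g$ is order--preserving and fixes $\overline{(n-1)}$, the conjugate $x\coloneqq m_c^{\,g}=g^{-1}m_c\,g$ satisfies $\supp{x}=g(\supp{m_c})=\bigl(g(c\overline{0}),\overline{(n-1)}\bigr)=(s\overline{0},\overline{(n-1)})$, which is the claim. This is the step I expect to be the main obstacle, because $F(n)$ is \emph{not} transitive on the interior points of $D(n)$ when $n\ge 3$. Writing $s=s_1\cdots s_m$, the cylinder tiling of $\{\xi\mid\xi<s\overline{0}\}$ induced by $s$ consists of $\sum_j s_j$ cylinders, and any cylinder tiling of this set differs from it by refinements, each of which changes the number of tiles by a multiple of $n-1$; thus $\nu(s\overline{0})\coloneqq \sum_j s_j \bmod (n-1)$ is an $F(n)$--invariant. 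Accordingly I must pick the model in the correct class: take $c=\rho$ if $\rho\coloneqq\nu(s\overline{0})\in\{1,\dots,n-2\}$, and $c=n-1$ if $\rho=0$, so that $\nu(c\overline{0})=\nu(s\overline{0})$.

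Finally, to see that equality of $\nu$ is sufficient and to actually construct $g$, I would refine the natural cylinder tilings of $\{\xi\mid\xi<c\overline{0}\}$ and of $\{\xi\mid\xi<s\overline{0}\}$ to a common cardinality $L$ (possible since their cardinalities agree modulo $n-1$), and likewise refine the tilings of $\{\xi\mid\xi\ge c\overline{0}\}$ and $\{\xi\mid\xi\ge s\overline{0}\}$ to a common cardinality $R$; the order--preserving bijection sending left tiles to left tiles and right tiles to right tiles is a tree pair defining $g\in F(n)$ with $g(c\overline{0})=s\overline{0}$. The only points needing care are the bookkeeping that the left-- and right--hand cardinalities can be matched simultaneously (which follows because the total number of tiles is $\equiv 1 \bmod (n-1)$ on both sides) and the verification that the various prefix--replacement pieces occurring in the models create no accidental, eventually--periodic fixed points; both are routine.
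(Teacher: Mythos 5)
Your argument is correct and, as far as I can check, complete. Note that the paper itself gives no proof of this lemma: it is quoted verbatim from \cite[Lemma 4.16]{kodama2023n}, so there is no in-text argument to compare yours against. Your route --- build a model element $m_c$ with $\supp{m_c}=(c\overline{0},\overline{(n-1)})$ for a single digit $c$, then conjugate by some $g\in F(n)$ carrying $c\overline{0}$ to $s\overline{0}$ --- is the natural one, and the details you supply do check out: $x_0$ and $x_{n-1}=\x{0}{(n-1)}$ have exactly the claimed supports (each prefix--replacement piece $[\alpha]\to[\beta]$ has a fixed point only when one of $\alpha,\beta$ is a prefix of the other, and in your tree pairs this happens only at the two extreme pieces), and the tile--counting shows $m_c\in F(n)$ since both partitions have $2n-1\equiv 1 \bmod (n-1)$ cylinders. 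Most importantly, you correctly identify the one genuine subtlety for $n\geq 3$ --- that $F(n)$ is \emph{not} transitive on the interior points of $D(n)$, the obstruction being the digit sum $\nu(s\overline{0})=\sum_j s_j \bmod (n-1)$ --- and you resolve it by choosing the model $m_c$ in the matching class and constructing $g$ by refining the left and right cylinder tilings to common cardinalities. The only mild caveat is that you implicitly use the standard characterization of $F(n)$ as the group of all order--preserving prefix--replacement maps given by pairs of $n$-ary trees with equally many leaves; this is well known and is also used implicitly elsewhere in the paper, but it is worth a citation if the proof were to be written out in full.
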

By a similar argument in \cite{kodama2023n}, we obtain the following:
\begin{theorem}[{cf.~\cite[Lemma 4.16]{kodama2023n}, \cite[Theorem 2]{burillo2018commutators}}]
  Let $G$ be in the set $\LM$.
  Then, the center of $G$ is trivial.
  \begin{proof}
    Let $f$ be an element of the center of $G$.
    We first note that if an element $g \in G$ has a support $\supp{g}=\{\xi \in \NN \mid \zeta<\xi< \overline{(n-1)}\}$ for some $\zeta \in \NN$, then $f(\zeta)=\zeta$ holds.
    Indeed, if not, either $f(\zeta)>\zeta$ or $f^{-1}(\zeta)>\zeta$ holds since each element in $G$ preserves the order.
    This implies that $f(\zeta)$ or $f^{-1}(\zeta)$ is in $\supp{g}$.
    Assume that $f(\zeta)$ is in $\supp{g}$.
    Then we have $fg(\zeta)=g(f(\zeta))\neq f(\zeta)$.
    Since $g(\zeta)=\zeta$, we also have $gf(\zeta)=f(g(\zeta))=f(\zeta)$.
    This contradicts the fact that $f$ is in the center of $G$.
    In a similar way, it is also a contradiction if $f^{-1}(\zeta)$ is in $\supp{g}$.

    By Lemma \ref{lemma_dense}, for any $s\overline{0} \in D(n)$, we have $f(s\overline{0})=s\overline{0}$.
    Since $D(n)$ is a dense subset of $\NN$, we obtain the desired result.
  \end{proof}
\end{theorem}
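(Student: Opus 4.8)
The plan is to use the fact that every element of $G$ acts on $\NN$ as an order-preserving homeomorphism, together with the structural Lemma \ref{lemma_dense}, to force a central element to fix a dense subset of $\NN$ and hence to be the identity. The reason the single lemma suffices for all four groups at once is that each $G$ in $\LM$ contains $\langle x_0, \dots, x_{n-1} \rangle = F(n)$, so the homeomorphisms produced by Lemma \ref{lemma_dense} are available inside every such $G$.

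First I would fix $f$ in the center of $G$ and isolate the following local principle: if some $g \in G$ has support of the form $\supp{g} = \{\xi \in \NN \mid \zeta < \xi < \ov{(n-1)}\}$ for a point $\zeta$, then $f(\zeta) = \zeta$. I would prove this by contradiction. Since $f$ preserves the lexicographic order and is a bijection, it is an order automorphism, so it fixes the maximal point $\ov{(n-1)}$; the same holds for $f^{-1}$. Consequently, as $\zeta < \ov{(n-1)}$, both $f(\zeta)$ and $f^{-1}(\zeta)$ lie strictly below $\ov{(n-1)}$. If $f(\zeta) \neq \zeta$, then one of $f(\zeta), f^{-1}(\zeta)$ exceeds $\zeta$ and therefore lands inside the open interval $\supp{g}$. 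In the case $f(\zeta) \in \supp{g}$ I would compute $fg(\zeta) = g(f(\zeta)) \neq f(\zeta)$, whereas $gf(\zeta) = f(g(\zeta)) = f(\zeta)$ because $g$ fixes $\zeta$; this contradicts $fg = gf$. The case $f^{-1}(\zeta) \in \supp{g}$ is handled symmetrically using that $f^{-1}$ is also central.

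Next I would feed the elements of Lemma \ref{lemma_dense} into this principle. For each word $s\ov{0} \in D(n)$ the lemma supplies $x \in F(n) \subseteq G$ with $\supp{x} = (s\ov{0}, \ov{(n-1)})$, so taking $g = x$ and $\zeta = s\ov{0}$ the local principle yields $f(s\ov{0}) = s\ov{0}$. Thus $f$ fixes every point of $D(n) = \{s\ov{0} \mid s \in \Nn\}$. Since $D(n)$ is dense in $\NN$ and $f$ is continuous, $f$ is the identity on $\NN$, which shows that the center of $G$ is trivial.

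The main obstacle I anticipate is the order-theoretic bookkeeping in the local principle, namely ruling out that $f(\zeta)$ or $f^{-1}(\zeta)$ escapes the open interval $(\zeta, \ov{(n-1)})$ by jumping onto the endpoint $\ov{(n-1)}$. This rests on the two facts that every element of $G$ preserves the lexicographic order and that $\ov{(n-1)}$ is the maximum of $\NN$ and hence fixed by any order automorphism; once these are in place the contradiction is immediate, and the remainder of the argument reduces to invoking Lemma \ref{lemma_dense} and the density of $D(n)$.
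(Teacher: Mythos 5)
Your proposal is correct and follows essentially the same route as the paper: the same local principle that a central element must fix the left endpoint of the support of any $g$ with $\supp{g}=(\zeta,\ov{(n-1)})$, combined with Lemma \ref{lemma_dense} and the density of $D(n)$. The only difference is that you make explicit the minor point that $f(\zeta)$ and $f^{-1}(\zeta)$ cannot land on the maximum $\ov{(n-1)}$, which the paper leaves implicit.
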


\section{HNN decompositions} \label{section_HNN}
Let $G$ be a group with a presentation $G \cong \left\langle S \mid R \right\rangle$, and $\phi \colon H \to K$ be an isomorphism between two subgroups $H$ and $K$ of $G$.
Let $t$ be a symbol not in $S$.
Then define a group $G*_{\phi,t}$ with the presentation
\begin{align*}
  G*_{\phi,t} \coloneq \left\langle S, t \mid R, t^{-1} h t = \phi(h)\ (h \in H) \right\rangle.
\end{align*}
This group $G*_{\phi,t}$ is called the \textit{HNN extension} of $G$ with respect to $\phi$, and a generator $t$ is called the \textit{stable letter}.
For simplicity, this presentation is denoted by
\begin{align*}
  G*_{\phi,t} \coloneq \left\langle G, t \mid t^{-1} h t = \phi(h)\ (h \in H) \right\rangle.
\end{align*}
If $H = G$, the HNN extension is said to be \textit{ascending}.
Also, if $\im \phi = K$ is a proper subgroup of $G$, then its ascending HNN extension $G*_{\phi,t}$ is called \textit{strict}.

Conversely, suppose that a group $G$ has a structure of an ascending HNN extension of its some subgroup $H$ and a generator $t$.
Then we write simply $G = H*_{t}$ since the homomorphism $\phi$ is just conjugation by $t$. 
Also, we call it an \textit{HNN decomposition} of $G$.
In this section, we show that the Brown--Thompson group and the $n$-adic Lodha--Moore groups have structures of strictly ascending HNN extensions of their subgroups.

\subsection{Case of the Brown--Thompson group}

The Brown--Thompson group $F(n)$ has a well-known HNN decomposition as follows:

\begin{proposition}[{cf.~\cite[Proposition 1.7]{brown1984infinite}}] \label{BT-HNN}
  The Brown--Thompson group $F(n)$ has a strictly HNN decomposition of the form $F(n) \cong F(n)*_{x_0}$ for any $n \geq 2$.
\end{proposition}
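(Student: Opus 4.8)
The plan is to read the decomposition straight off the infinite presentation \eqref{eq:F(n)-presentation}. Let $B$ be the group presented by $\langle y_0, y_1, y_2, \dots \mid y_i^{-1}y_jy_i = y_{j+n-1}\ (i<j)\rangle$; this is verbatim the presentation of $F(n)$, so $B \cong F(n)$ with no further work. I would then define $\phi\colon B \to B$ on generators by $\phi(y_k) = y_{k+n-1}$ and first check that $\phi$ is a well-defined homomorphism: applying $\phi$ to a defining relation $y_i^{-1}y_jy_i = y_{j+n-1}$ yields $y_{i+n-1}^{-1}y_{j+n-1}y_{i+n-1} = y_{(j+n-1)+n-1}$, which is again a defining relation because $i+n-1 < j+n-1$. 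Thus $\phi$ is an endomorphism of $B$ with image $\langle y_m \mid m \geq n-1\rangle$, and I form the ascending HNN extension $E \coloneqq B*_{\phi,t}$, which is ascending precisely because $\phi$ is defined on all of $B$. It then remains to identify $E$ with $F(n)$ (matching $t$ with $x_0$) and to verify strictness.

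For the identification I would exhibit mutually inverse homomorphisms. Define $\Phi\colon E \to F(n)$ by $y_k \mapsto x_{k+1}$ and $t \mapsto x_0$, and $\Psi\colon F(n) \to E$ by $x_0 \mapsto t$ and $x_k \mapsto y_{k-1}$ for $k \geq 1$. Checking that each respects relations is pure index bookkeeping: under $\Phi$ the base relations of $E$ become the relations $x_i^{-1}x_jx_i = x_{j+n-1}$ with $i \geq 1$, while the stable-letter relations $t^{-1}y_kt = y_{k+n-1}$ become the relations with $i=0$, namely $x_0^{-1}x_{k+1}x_0 = x_{k+n}$; conversely $\Psi$ carries the $i=0$ relations of $F(n)$ to the stable-letter relations of $E$ and the $i\geq 1$ relations to the base relations of $B$. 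Since $\Phi$ and $\Psi$ are mutually inverse on generators, they are inverse isomorphisms, giving $F(n) \cong E = B*_{\phi,t} \cong F(n)*_{x_0}$.

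Finally, for strictness I must show $\operatorname{im}\phi = \langle y_m \mid m \geq n-1\rangle$ is a proper subgroup of $B$. Here I would invoke that the abelianization of $F(n)$ is $\mathbb{Z}^n$ (Theorem \ref{BT-property}(4)): abelianizing the defining relations gives $\overline{y_{j+n-1}} = \overline{y_j}$ for $j \geq 1$, so the classes $\overline{y_0}, \dots, \overline{y_{n-1}}$ generate, and being $n$ generators of $\mathbb{Z}^n$ they form a basis. The subgroup $\operatorname{im}\phi$ abelianizes onto $\langle \overline{y_{n-1}}, \overline{y_n}, \dots\rangle = \langle \overline{y_1}, \dots, \overline{y_{n-1}}\rangle$, which omits the basis element $\overline{y_0}$; hence $\operatorname{im}\phi$ is proper and the HNN extension is strictly ascending. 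The one genuinely substantive point is this properness, and it is disposed of by the abelianization computation; everything else is an index-shift comparison of two identical-looking presentations, so I expect no real obstacle beyond careful bookkeeping. (Alternatively, properness—and the injectivity implicit in the HNN structure—can be read off from the faithful action on $\NN$, where $B$ and $\phi(B)$ are supported on nested proper sub-Cantor sets.)
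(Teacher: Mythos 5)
Your proof is correct and is essentially the paper's argument: both realize $F(n)$ as a strictly ascending HNN extension with base subgroup $\langle x_1, x_2, \dots\rangle \cong F(n)$ and stable letter $x_0$, you simply working through the abstract presentation \eqref{eq:F(n)-presentation} where the paper works directly with the concrete subgroup $F(n)_{\geq 1}$. Your abelianization computation for properness of $\operatorname{im}\phi$ is a welcome explicit justification of a step the paper only asserts.
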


\begin{proof}
  Consider the subgroup $F(n)_{\geq 1} \coloneq \langle x_1, x_2, \ldots \rangle$ of $F(n)$.
  This is isomorphic to $F(n)$, and obviously the whole group $F(n)$ is generated by $F(n)_{\geq 1}$ and $x_0$.
  By the relation in (\ref{eq:F(n)-presentation}), we see that $(F(n)_{\geq 1})^{x_0} \subset F(n)_{\geq 1}$ and this inclusion is proper.
\end{proof}


\subsection{Case of the $n$-adic Lodha--Moore groups} \label{LM_HNN}
Zaremsky \cite[Section 2]{zaremsky2016hnn} showed that the Lodha--Moore groups have two types of HNN decompositions, which are called the ``$F$-like'' and ``non-$F$-like'' HNN decomposition.
Recall that Thompson's group $F$ (and $F(n)$) has an HNN decomposition such that the base subgroup is isomorphic to the whole group.
For the  Lodha--Moore groups, ``$F$-like'' means that the base subgroup is larger than the whole group.
On the contrary, ``non-$F$-like'' means that the base subgroup is smaller than the whole group.
In this section, we generalize these decompositions to the $n$-adic Lodha--Moore groups.

\begin{proposition} \label{LM_Flike}
  The $n$-adic Lodha--Moore groups have ``$F$-like'' strictly HNN decompositions of the forms
  \begin{enumerate}
    \item $\G{n} \cong (\yG{n})*_{x_0}$ \label{HNN1}
    \item $\G{n} \cong (\Gy{n})*_{x_0^{-1}}$ \label{HNN2}
    \item $\yG{n} \cong (\yGy{n})*_{x_0^{-1}}$ \label{HNN3}
    \item $\Gy{n} \cong (\yGy{n})*_{x_0}$ \label{HNN4}
  \end{enumerate}
\end{proposition}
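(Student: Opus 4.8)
The plan is to prove all four decompositions in parallel, mirroring the argument for the Brown--Thompson group in Proposition \ref{BT-HNN} but carrying along the extra $y$-generators. In each case I write the asserted decomposition as $A \cong B *_{t}$ with $t \in \{x_0, x_0^{-1}\}$, and I must produce a subgroup $\widehat{B} \le A$ isomorphic to the (``larger'') base group $B$, show that $A = \langle \widehat{B}, t\rangle$, and show that conjugation by $t$ carries $\widehat{B}$ strictly into itself. The unifying mechanism is the self-similar action on $\Nn$: conjugation by $x_0$ acts on the generators by $x_0^{-1}x_j x_0 = x_{j+n-1}$ for $1\le j$ (the relations (\ref{eq:F(n)-presentation}) of $F(n)$) and by $y_\beta^{x_0} = y_{x_0(\beta)}$ whenever $x_0(\beta)$ is defined (relation (2)). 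Thus conjugation by $x_0$ (resp.\ $x_0^{-1}$) pushes supports toward the fixed point $\ov{(n-1)}$ (resp.\ $\ov 0$) of $x_0$, which is exactly what produces an ascending structure.

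First I would fix the base subgroup. For the two decompositions with stable letter $x_0$, namely (\ref{HNN1}) and (\ref{HNN4}), I take $\widehat B$ to be the subgroup generated by $\{x_1, x_2, \dots\}$ together with those $y$-generators of $A$ whose index word $\alpha$ does not begin with $0$ (equivalently, the elements of $A$ with trivial germ at $\ov 0$); for the decompositions with stable letter $x_0^{-1}$, namely (\ref{HNN2}) and (\ref{HNN3}), I use the mirror subgroup, trivial at the germ $\ov{(n-1)}$. That $A = \langle \widehat B, x_0\rangle$ follows because $x_0$ realizes a generator of the germ group at $\ov 0$ while $\widehat B$ is precisely the germ-trivial part, so adjoining $x_0$ recovers all of $A$. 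The strict ascending property $x_0^{-1}\widehat B x_0 \subsetneq \widehat B$ is then a direct computation from the two conjugation formulas: each $x_j$ ($j\ge1$) maps to $x_{j+n-1}$ and each admissible $y_\beta$ maps to $y_{(n-1)\beta}$, all again in $\widehat B$, while $y_{(n-1)0}$ is not of this form, giving properness.

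The heart of the matter is to identify the isomorphism type of $\widehat B$, and this is where the ``extra'' generator appears. By self-similarity $\widehat B$ is a copy of a Lodha--Moore group on the sub-Cantor set on which it acts faithfully, and I would pin this down by exhibiting generators of $\widehat B$ satisfying exactly the defining relations of the claimed base group and invoking the presentation theorem quoted from \cite[Corollary 3.38]{kodama2023n}: the generator $y_{(n-1)0}$ of $A$, read in the rescaled coordinates of $\widehat B$, plays the role of the new generator $y_0$ (resp.\ $y_{(n-1)}$) distinguishing $\yG{n}$ (resp.\ $\Gy{n}$, $\yGy{n}$) from $A$. Finally, to conclude that $A$ is genuinely the HNN extension $B *_t$ rather than merely generated by $\widehat B$ and $t$, I would check that the presentation $\langle X(n)\cup Y(n)\mid R(n)\rangle$ of $A$ splits as the relations internal to $\widehat B$ together with the stable-letter relations $t^{-1}bt = \phi(b)$; all relations of types (1)--(3) split in the evident way, and the recursion (4) matches the recursive definition of $y$ under conjugation by $x_0$.

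The main obstacle is exactly this last identification: verifying that $\widehat B \cong \yG{n}$ (and its analogues) on the nose, with the correct extra generator and no collapse, and that the recursion relation (4) --- which couples $y_\alpha$ to $y$-generators at deeper prefixes --- is consistent with the base/stable-letter split, so that the map from the abstract HNN extension onto $A$ is injective. I expect this to require the explicit conjugating homeomorphism (or equivalent Tietze/normal-form bookkeeping) rather than a purely formal manipulation, since the ``$F$-like'' phenomenon that the base is strictly larger than the ambient group is precisely what must be certified here.
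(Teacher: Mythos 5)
Your overall strategy --- exhibit a base subgroup $\widehat B$, check that it and the stable letter generate, check strict descent under conjugation, and identify $\widehat B$ with the claimed base group --- is the same as the paper's, and for $n=2$ your base subgroup coincides with theirs. The genuine gap is your choice of $\widehat B$ for $n\geq 3$, which makes the step you yourself call ``the heart of the matter'' intractable (and is left unresolved in your write-up). The paper takes the base of $\G{n}\cong(\yG{n})*_{x_0}$ to be the \emph{single-subtree} subgroup $\G{n}_{(n-1)}$ generated by the $\x{i}{(n-1)\beta}$ and the $y_{(n-1)\beta}$, i.e.\ elements supported on the one cylinder $(n-1)\NN$. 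For that subgroup the identification with $\yG{n}$ is immediate: deleting the prefix $(n-1)$ sends $\x{i}{(n-1)\beta}\mapsto\x{i}{\beta}$ and $y_{(n-1)\beta}\mapsto y_{\beta}$, and the legal indices $(n-1)0^{k}$ (non-constant, hence in $Y_0(n)$) map to $0^{k}$, producing exactly the extra generator $y_0$ of ${}_yY(n)$. Your base is instead $F(n)_{\geq1}=\langle x_1,x_2,\dots\rangle$ together with all $y_\alpha$ whose index does not begin with $0$; for $n\geq3$ this is a strictly larger subgroup (it contains $x_1=\x{1}{\epsilon}$ and generators such as $y_{1(n-2)}$ supported on $1\NN$, none of which lie in $\G{n}_{(n-1)}$), supported on the union of $n-1$ subtrees $1\NN\cup\cdots\cup(n-1)\NN$ rather than on one. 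There are then no canonical ``rescaled coordinates'' in which $y_{(n-1)0}$ becomes $y_0$: identifying $n-1$ $n$-ary subtrees with a single one is the nontrivial $F_{n,n-1}\cong F_{n,1}$ phenomenon, it scrambles the rays to which the $y_\alpha$ are attached, and it is not clear that your $\widehat B$ is isomorphic to $\yG{n}$ at all. The fix is simply to shrink the base to the subtree subgroup.

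Two smaller points. Your parenthetical description of $\widehat B$ as ``the elements with trivial germ at $\ov 0$'' is not equivalent to the generating-set description (e.g.\ $\x{1}{0}$ has trivial germ at $\ov 0$ but is supported inside $0\NN$ and lies in neither $F(n)_{\geq1}$ nor your $y$-part), and generation of the ambient group by $\widehat B$ and $x_0$ cannot be waved through by a germ argument: the paper proves it by explicit conjugations bringing each generator $\x{i}{j\beta}$ and $y_{j\beta}$ with $0\le j\le n-2$ into the base by a suitable power of $x_0$, with a separate computation for indices of the form $0^{k}$. Your strict-ascent computation is fine in spirit, and your concern about injectivity of the map from the abstract HNN extension is legitimate (it is settled by a support argument showing no nonzero power of $x_0$ lies in the base, rather than by re-splitting the presentation), but neither compensates for the missing identification of the base group.
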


\begin{proof}
  First, we prove case (\ref{HNN1}).
  For $\alpha \in \Nn$, we define the subgroup $\G{n}_{\alpha}$ of $\G{n}$ generated by
  \begin{align*}
    \left\{ \x{i}{\alpha \beta} \mid 0 \leq i \leq n-2, \beta \in \Nn \right\} \cup
    \left\{ y_{\alpha \beta} \mid \beta \in \Nn, y_{\alpha \beta} \in Y_0(n) \right\}.
  \end{align*}
  The subgroups $\yG{n}_{\alpha}, \Gy{n}_{\alpha}$ and $\yGy{n}_{\alpha}$ are also defined similarly.
  In particular, $\G{n}_{(n-1)}$ is isomorphic to $\yG{n}$ via $\x{i}{(n-1)\beta} \mapsto \x{i}{\beta}$ and $y_{(n-1)\beta} \mapsto y_{\beta}$.
  Thus it is sufficient to show that $\G{n} \cong (\G{n}_{(n-1)})*_{x_0}$ holds.
  To do this, we need to check that $\G{n}$ is generated by $\G{n}_{(n-1)}$ and $x_0$, and that $(\G{n}_{(n-1)})^{x_0} \subsetneq \G{n}_{(n-1)}$ holds.
  Except for $x_0$, the generators of $\G{n}$ and not contained in $\G{n}_{(n-1)}$ are of the forms
  \begin{enumerate}[label=(\roman*)]
    \item $x_i = \x{i}{\epsilon}$ for $1 \leq i \leq n-2$;
    \item $\x{i}{0\beta}, \ldots, \x{i}{(n-2)\beta}$ for $1 \leq i \leq n-2, \beta \in \Nn$; and
    \item $y_{0\beta}, \ldots, y_{(n-2)\beta}$ for $\beta \in \Nn$, where $y_{0^k}$ is not contained.
  \end{enumerate}

  \underline{Case (i)}\ \ it is clear by the relation $x_0^{-1} x_i x_0 = x_{i+n-1} = \x{i}{n-1} \in \G{n}_{n-1}$.

  \underline{Case (ii)}\ \ for any $1 \leq j \leq n-2$, we have $x_0^{-1} \x{i}{j\beta} x_0 = \x{i}{(n-1)j\beta} \in \G{n}_{(n-1)}$.
  When $j = 0$, suppose that $\beta$ is of the form $0^k l \gamma$ for some $k \geq 0, 1 \leq l \leq n-2$ and $\gamma \in \Nn$.
  Then we have $x_0^{-(k+2)} \x{i}{0\beta} x_0^{k+2} = \x{i}{(n-1)l\gamma} \in \G{n}_{(n-1)}$.
  If $\beta = 0^k$ for some $k \geq 0$, then we have $x_0^{-(k+1)} \x{i}{0\beta} x_0^{k+1} = x_i \x{0}{n-1}^{-1} \in \G{n}_{(n-1)}$.

  \underline{Case (iii)}\ \ this is similar to case (ii).

  Finally, since $x_0^{-1} \x{i}{(n-1)\beta} x_0 = \x{i}{(n-1)(n-1)\beta}$ and $x_0^{-1} y_{(n-1)\beta} x_0 = y_{(n-1)(n-1)\beta}$, the inclusion $(\G{n}_{(n-1)})^{x_0} \subset \G{n}_{(n-1)}$ holds and it is proper.

  By above, case (\ref{HNN4}) immediately follows.
  Also, since $\G{n}_0 \cong \Gy{n}$ and $(\G{n}_{0})^{x_0^{-1}} \subsetneq \G{n}_{0}$, we obtain case (\ref{HNN2}) and (\ref{HNN3}) by the similar argument.
\end{proof}

\begin{proposition} \label{LM_nonFlike}
  The $n$-adic Lodha--Moore groups have ``non-$F$-like'' strictly HNN decompositions of the forms
  \begin{enumerate}
    \setcounter{enumi}{4}
    \item $\yG{n} \cong (\G{n})*_{y_0^{-1}}$ \label{HNN5}
    \item $\Gy{n} \cong (\G{n})*_{y_{n-1}}$ \label{HNN6}
    \item $\yGy{n} \cong (\Gy{n})*_{y_0^{-1}}$ \label{HNN7}
    \item $\yGy{n} \cong (\yG{n})*_{y_{n-1}}$ \label{HNN8}
  \end{enumerate}
\end{proposition}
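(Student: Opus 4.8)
The plan is to prove (\ref{HNN5}) and (\ref{HNN7}) directly and then obtain (\ref{HNN6}) and (\ref{HNN8}) from them by the symmetry $\tau$ of $\NN$ induced by $k \mapsto (n-1)-k$ on each coordinate. A direct check on the recursive definition of $y$ shows $\tau y \tau = y^{-1}$, so $\tau$ fixes $\G{n}$ and $\yGy{n}$, interchanges $\yG{n}$ and $\Gy{n}$, and sends $y_0^{-1} \mapsto y_{n-1}$; applying $\tau$ to (\ref{HNN5}) and (\ref{HNN7}) then gives (\ref{HNN6}) and (\ref{HNN8}). To recognise an ascending HNN extension I would use the following criterion: if $H \leq G$ and $t \in G$ satisfy $G = \langle H, t\rangle$ and $t^{-1}Ht \subseteq H$, and if there is a homomorphism $\rho \colon G \to \bZ$ with $\rho(H) = 0$ and $\rho(t) = 1$, then $G \cong H*_{t}$ with $\phi(h) = t^{-1}ht$, the extension being strict exactly when $t^{-1}Ht \subsetneq H$. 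The role of $\rho$ is to make the $t$-exponent a well-defined grading, so that the canonical surjection from the abstract HNN extension is injective on the normal form $t^{-k}ht^{k}$; this is the extra input beyond generation and self-conjugation, and it is supplied by the abelianization maps already computed above.

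For (\ref{HNN5}) take $H = \G{n}$, $G = \yG{n} = \langle \G{n}, y_0\rangle$ and $t = y_0^{-1}$, so that $\phi(g) = y_0 g y_0^{-1}$ and generation is immediate. On the generators $x_1, \dots, x_{n-1}$ and $y_{(n-1)0}$ of $\G{n}$ one has $\phi = \mathrm{id}$: each $x_i$ with $i \geq 1$ fixes the word $0$, so relation (2) gives $y_0 x_i = x_i y_0$, while $y_{(n-1)0}$ commutes with $y_0$ by relation (3). The only nontrivial generator is $x_0$. Expanding $y_0$ by relation (4) with $\alpha = 0$ and pushing $x_0$ leftward through the three $y$-factors by relation (2) (using $x_0(00) = 0$, $x_0(0(n-1)0) = (n-1)00$ and $x_0(0(n-1)(n-1)) = (n-1)0(n-1)$) yields
\begin{align*}
  y_0 x_0 = \x{0}{0}\, x_0\, y_0\, y_{(n-1)00}^{-1}\, y_{(n-1)0(n-1)}.
\end{align*}
Since $y_0$ commutes with $y_{(n-1)00}$ and $y_{(n-1)0(n-1)}$ by relation (3), conjugation collapses the middle $y_0$ and gives
\begin{align*}
  \phi(x_0) = y_0 x_0 y_0^{-1} = \x{0}{0}\, x_0\, y_{(n-1)00}^{-1}\, y_{(n-1)0(n-1)},
\end{align*}
all four factors of which lie in $\G{n}$. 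Hence $\phi(\G{n}) \subseteq \G{n}$.

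Strictness and the grading both come from the abelianization. In $\G{n}^{\mathrm{ab}} \cong \bZ^{n+1}$ (Theorem~\ref{LM-property}) the classes $\ov{x_0}, \dots, \ov{x_{n-1}}, \ov{y_{(n-1)0}}$ form a basis and every nonconstant $y_\beta$ has the class $\ov{y_{(n-1)0}}$; so the two $y$-terms above cancel and, using $\ov{\x{0}{0}} = \ov{x_0} - \ov{x_{n-1}}$, the induced map $\phi_*$ fixes $\ov{x_1}, \dots, \ov{x_{n-1}}, \ov{y_{(n-1)0}}$ and sends $\ov{x_0} \mapsto 2\ov{x_0} - \ov{x_{n-1}}$. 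This matrix has determinant $2$, so $\phi_*$ is not onto, whence $\phi(\G{n}) \subsetneq \G{n}$ and the extension is strict. For $\rho$ I would take the negated $y_0$-coordinate of the abelianization ${}_y\pi$ of Proposition~\ref{Prop_abyG}, which kills all generators of $\G{n}$ and sends $y_0^{-1} \mapsto 1$; the criterion then yields (\ref{HNN5}). Case (\ref{HNN7}) is identical with $H = \Gy{n}$ and $G = \yGy{n} = \langle \Gy{n}, y_0\rangle$: the computation of $\phi(x_0)$ is unchanged, the extra generator $y_{n-1}$ commutes with $y_0$ by relation (3), strictness follows from the same determinant computation in $\Gy{n}^{\mathrm{ab}}$ (Proposition~\ref{Prop_abGy}), and $\rho$ is the $y_0$-coordinate of ${}_y\pi_y$ from Proposition~\ref{Prop_abyGy}.

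The step I expect to demand the most care is the verification that $\phi(x_0)$ lands in the base: one must check that every relation invoked while pushing $x_0$ through the expansion of $y_0$ genuinely belongs to the relation set of the ambient group — they do, since relation (4) for $\alpha = 0$ and the relevant instances of relation (2) involve only $y_{0^k}$ and nonconstant $y_\beta$, all legitimate in $\yG{n}$ and $\yGy{n}$ — and that the intermediate generator $y_{00} \notin \G{n}$ really cancels, leaving an expression supported on $\G{n}$. The secondary point is the justification of the HNN criterion itself, namely that generation together with $\phi(H) \subseteq H$ and the grading $\rho$ exclude any extra relations; this is precisely where the abelianization maps of the preceding subsection are needed.
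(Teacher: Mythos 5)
Your proposal is correct, and it reaches the conclusion by a genuinely different route in two places. For the containment $y_0 x_0 y_0^{-1} \in \G{n}$ you and the paper do essentially the same thing (expand $y_0$ via relation (4) and push $x_0$ through with relation (2)); your closed form $\x{0}{0}\,x_0\,y_{(n-1)00}^{-1}y_{(n-1)0(n-1)}$ and the paper's $x_0^2\,y_{(n-1)(n-1)0}y_{(n-1)0}^{-1}\x{0}{n-1}^{-1}$ are just two valid normalizations. The divergence is in the proof of \emph{strictness}: the paper's Lemma \ref{LM_conjugate} exhibits the explicit element $\x{0}{0}$ outside the image by a dynamical argument (if $y_{00}^{-1}y_0$ lay in $\Gy{n}$ it could only move a bounded prefix of the point $0^m000(n-1)00(n-1)(n-1)\cdots$, which a direct computation contradicts), whereas you observe that conjugation by $y_0$ induces on $\G{n}^{\mathrm{ab}} \cong \bZ^{n+1}$ a matrix of determinant $2$ and hence cannot be onto. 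Your argument is shorter, recycles the abelianization computations of Section 2, and even quantifies the failure of surjectivity, though it does not name a witness element the way the paper does. Your other additions are improvements in rigor rather than differences of substance: the paper obtains cases (\ref{HNN6}) and (\ref{HNN8}) by saying ``similarly,'' where you make the order-reversing symmetry $k \mapsto (n-1)-k$ explicit; and, more importantly, the paper only verifies generation plus proper self-conjugation, which by itself does not rule out the canonical surjection from the abstract HNN extension having a kernel --- your retraction $\rho$ to $\bZ$ (read off from the abelianizations) is exactly the missing ingredient, and it is needed equally for Propositions \ref{BT-HNN} and \ref{LM_Flike}. One cosmetic caveat: your appeal to relation (2) for $y_{00}$ and to ``all nonconstant $y_\beta$ sharing the class $\ov{y_{(n-1)0}}$'' relies on the intended (rather than literally printed) definition of ${}_yY(n)$ and on the explicit form of the abelianization of $\G{n}$ from \cite[Theorem 4.10]{kodama2023n}, both of which are consistent with the paper's framework.
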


Before proving this theorem, we show the following lemma:

\begin{lemma} \label{LM_conjugate}
  We have $y_0 x_0 y_0^{-1} \in \G{n}$ and $y_0^{-1} \x{0}{0} y_0 \notin \Gy{n}$.
\end{lemma}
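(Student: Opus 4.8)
The plan is to handle the two assertions by quite different means: the membership statement is a short formal manipulation of the defining relations, whereas the non-membership statement needs a local obstruction at the point $\overline{0}=00\cdots$. For $y_0 x_0 y_0^{-1}\in\G{n}$, I would first move $x_0$ past $y_0^{-1}$ using relation (2). Taking $\x{i}{\alpha}=x_0=\x{0}{\epsilon}$ and $\beta=00$, and noting that $x_0(00)=0$, relation (2) reads $y_{00}x_0=x_0y_0$, hence $x_0y_0^{-1}=y_{00}^{-1}x_0$ and therefore $y_0x_0y_0^{-1}=y_0y_{00}^{-1}x_0$. Next I would expand the prefix-$0$ factor by relation (4) with $\alpha=0$, namely $y_0=\x{0}{0}\,y_{00}\,y_{0(n-1)0}^{-1}\,y_{0(n-1)(n-1)}$. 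Since the prefix $00$ is incomparable with both $0(n-1)0$ and $0(n-1)(n-1)$, relation (3) lets me commute the trailing $y_{00}^{-1}$ leftward to cancel the $y_{00}$ produced by relation (4), collapsing $y_0y_{00}^{-1}$ to $\x{0}{0}\,y_{0(n-1)0}^{-1}\,y_{0(n-1)(n-1)}$. This gives
\[
  y_0 x_0 y_0^{-1}=\x{0}{0}\,y_{0(n-1)0}^{-1}\,y_{0(n-1)(n-1)}\,x_0 .
\]
Every factor lies in $\G{n}$: $\x{0}{0}\in X(n)$ and $x_0$ are generators, while $y_{0(n-1)0}$ and $y_{0(n-1)(n-1)}$ have non-constant subscripts of digit sum $\equiv0\pmod{n-1}$, hence lie in $Y_0(n)$. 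This settles the first claim.

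For $y_0^{-1}\x{0}{0}y_0\notin\Gy{n}$, I would build an obstruction from the germ at $\overline{0}$. Writing $q=y_0^{-1}\x{0}{0}y_0$, unwinding the definitions shows $q$ is supported on the cylinder $0\NN$ and acts there by $q(0\eta)=0\,y\!\left(x_0\!\left(y^{-1}(\eta)\right)\right)$. Feeding in a point $0^{N}\mu$ with many leading zeros and pushing it through $y^{-1}$, then $x_0$, then $y$, one computes that $q$ preserves the block of leading zeros but inserts a digit $n-1$ into the tail: concretely $q(0^{N+1}c\nu)=0^{N+1}(n-1)c\nu$ for a first tail digit $c\in\{1,\dots,n-2\}$ and all large $N$. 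In particular the germ of $q$ at $\overline{0}$ does not preserve tails.

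The obstruction is that no element of $\Gy{n}$ has this behaviour. Each generator $y_\alpha\in Y_y(n)$ has $\alpha\neq 0^i$, so $\overline{0}$ lies outside the clopen cylinder $\alpha\NN$ and $y_\alpha$ is the identity near $\overline{0}$; thus its germ there is trivial. Since all generators fix $\overline{0}$, germ-taking at $\overline{0}$ is a homomorphism, so for any $g\in\Gy{n}$ the germ of $g$ equals that of the product of its $X(n)$-letters, i.e.\ the germ of an element of $F(n)$. But an element of $F(n)$ acts near $\overline{0}$ by a prefix replacement along the leftmost branch, $0^{N}\mu\mapsto 0^{N+s}\mu$ for a fixed $s\in\bZ$, which always preserves the tail $\mu$. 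As the germ of $q$ alters the tail, it is not the germ of any $F(n)$-element, whence $q\notin\Gy{n}$. I expect the main obstacle to be precisely this second half: pinning down the germ-at-$\overline{0}$ invariant and verifying the explicit local formula for $q$, including the degenerate case $n=2$ where the tail digit $c$ is unavailable and one instead probes with a tail beginning $10$; the first half is only a few lines of relation-chasing.
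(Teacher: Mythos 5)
Your proposal is correct and follows essentially the same route as the paper: the membership half is the same relation-chasing (you reach the equivalent expression $\x{0}{0}\,y_{0(n-1)0}^{-1}\,y_{0(n-1)(n-1)}\,x_0$ via $y_{00}=x_0y_0x_0^{-1}$ and relations (3)--(4), where the paper instead derives $x_0^2 y_{(n-1)(n-1)0} y_{(n-1)0}^{-1} \x{0}{n-1}^{-1}$), and the non-membership half is the same tail-obstruction-near-$\overline{0}$ argument, which the paper runs on a single test sequence $0^m000(n-1)00(n-1)(n-1)\cdots$ after first reducing to $y_{00}^{-1}y_0$. Your germ computation $q(0^{N+1}c\nu)=0^{N+1}(n-1)c\nu$ checks out, and the $n=2$ probe with tail beginning $10$ that you flag does close the degenerate case.
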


\begin{proof}
  By direct calculation, we have $y_{\alpha} = y_{\alpha(n-1)} y_{\alpha0(n-1)}^{-1} y_{\alpha00} \x{0}{\alpha}$ for all $\alpha \in \Nn$ such that $y_{\alpha} \in Y(n)$.
  Therefore, we obtain
  \begin{align*}
    y_0 x_0 y_0^{-1} & = (y_{0(n-1)} y_{00(n-1)}^{-1} y_{000} \x{0}{0}) x_0 (y_{0(n-1)} y_{00(n-1)}^{-1} y_{000} \x{0}{0})^{-1}                             \\
                     & = y_{0(n-1)} y_{00(n-1)}^{-1} y_{000} (\x{0}{0} x_0 \x{0}{0}^{-1}) y_{000}^{-1} y_{00(n-1)} y_{0(n-1)}^{-1}                          \\
                     & = (y_{0(n-1)} y_{00(n-1)}^{-1} y_{000} x_0^2 \x{0}{n-1}^{-1}) \x{0}{0}^{-1} y_{000}^{-1} y_{00(n-1)} y_{0(n-1)}^{-1}                 \\
                     & = x_0^2 y_{(n-1)(n-1)0} y_{(n-1)0}^{-1} \x{0}{n-1}^{-1} (y_{0} \x{0}{0}^{-1}) y_{000}^{-1} y_{00(n-1)} y_{0(n-1)}^{-1}               \\
                     & = x_0^2 y_{(n-1)(n-1)0} y_{(n-1)0}^{-1} \x{0}{n-1}^{-1} y_{0(n-1)} y_{00(n-1)}^{-1} y_{000} y_{000}^{-1} y_{00(n-1)} y_{0(n-1)}^{-1} \\
                     & = x_0^2 y_{(n-1)(n-1)0} y_{(n-1)0}^{-1} \x{0}{n-1}^{-1}.
  \end{align*}
  By definition, this is in $\G{n}$.
  On the other hand, suppose that $y_0^{-1} \x{0}{0} y_0 = y_{0(n-1)(n-1)}^{-1} y_{0(n-1)0} y_{00}^{-1} y_0$ is in $\Gy{n}$, in particular, $y_{00}^{-1} y_0$ is in $\Gy{n}$.
  Then, it is written as a product of elements in $F(n)$ and of the form $y_{\alpha}$ with $\alpha \neq \epsilon, 0^i$ for any $i \geq 1$.
  Therefore, if we consider the sequence $0^m 000(n-1)00(n-1)(n-1)00(n-1)\cdots \in \NN$ for sufficiently large $m > 0$, then only the first finite number of this sequence must change by $y_{00}^{-1} y_0$.
  However, we have
  \begin{align*}
     & y_{00}^{-1} y_0 (0^m 000(n-1)00(n-1)(n-1)00(n-1)\cdots)         \\
     & = y_0 \circ y_{00}^{-1} (0^m000(n-1)00(n-1)(n-1)00(n-1)\cdots) \\
     & = 0^m0y(0y^{-1}(0(n-1)00(n-1)(n-1)00(n-1)\cdots))              \\
     & = 0^m0y(0000(n-1)(n-1)00(n-1)(n-1)0\cdots)                     \\
     & = 0^m000(n-1)(n-1)(n-1)(n-1)0(n-1)(n-1)(n-1)(n-1)0\cdots.
  \end{align*}
  Hence, this is a contradiction.
\end{proof}

\begin{proof}[Proof of Proposition \ref{LM_nonFlike}]
  First, we prove case (\ref{HNN7}).
  The group $\yGy{n}$ is generated by $\Gy{n} = \langle x_0, x_1, \dots, x_{n-1}, y_{(n-1)0}, y_{(n-1)} \rangle$ and $y_0^{-1}$.
  Thus, we need to check that $(\Gy{n})^{y_0^{-1}} \subset \Gy{n}$ and this inclusion is proper.
  From the relations of $\yGy{n}$, we have $y_0 x_i y_0^{-1} = x_i\ (1 \leq i \leq n-1), y_0 y_{n-1} y_0^{-1} = y_{n-1}$ and $y_0 y_{(n-1)0} y_0^{-1} = y_{(n-1)0}$.
  By Lemma \ref{LM_conjugate}, we have $y_0 x_0 y_0^{-1} \in \Gy{n}$, and $\x{0}{0} \notin (\Gy{n})^{y_0^{-1}}$, and thus the inclusion $(\Gy{n})^{y_0^{-1}} \subset \Gy{n}$ is proper.
  Also, we immediately see case (\ref{HNN5}).

  Similarly to Lemma \ref{LM_conjugate}, we see $y_{n-1}^{-1} x_{n-1} y_{n-1} \in \G{n}$ and $y_{n-1} x_{n-1} y_{n-1}^{-1} \notin \yG{n}$.
  Therefore case (\ref{HNN6}) and (\ref{HNN8}) are also true.
\end{proof}
\subsection{Group cohomology and HNN extensions}
For the arguments in Section \ref{section_notSCY} and \ref{section_homotopy}, we compute the group cohomology with coefficient in the group ring.
Let $G = H*_{\phi, t} = \left\langle H, t \mid t^{-1} a t = \phi(a)\ (a \in A) \right\rangle$ be an HNN extension of a group $H$, where $\phi \colon A \to H$ be a monomorphism from a subgroup $A$ of $H$.
Then we consider the Meyer--Vietoris sequence \cite[Theorem 3.1]{bieri1975mayer}
\begin{align*}
  \cdots \to H^{i}(G;M) \to H^{i}(H;M) \xrightarrow{\alpha} H^{i}(A;M) \to H^{i+1}(G;M) \to \cdots
\end{align*}
with coefficients in any $\bZ[G]$-module $M$.

\begin{lemma} [{\cite[Theorem 0.1]{brown1985cohomology}}] \label{HNN-cohomology}
  Let $H$ and $A$ be of type ${\rm FP}_{n}$.
  Suppose that the map $H^{i}(H;\bZ[H]) \to H^{i}(A;\bZ[H])$ induced by the inclusion $A \hookrightarrow H$ is a monomorphism for some $i \leq n$.
  Then the map
  \begin{align*}
    \alpha \colon H^{i}(H;\bZ[G]) \to H^{i}(A;\bZ[G])
  \end{align*}
  in the Meyer--Vietoris sequence is a monomorphism.
\end{lemma}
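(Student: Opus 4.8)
The plan is to transfer the hypothesis, which concerns $\bZ[H]$-coefficients and the plain restriction map, into the conclusion about $\bZ[G]$-coefficients and the Mayer--Vietoris differential, by exploiting two features of the situation: the finiteness encoded in the ${\rm FP}_n$ hypothesis, and the freeness of $\bZ[G]$ as a module over $\bZ[H]$ and over $\bZ[A]$. First I would record the standard consequence of ${\rm FP}_n$: a group $\Gamma$ of type ${\rm FP}_n$ admits a projective resolution of $\bZ$ over $\bZ[\Gamma]$ that is finitely generated through degree $n$, so in degrees $i \le n$ the functor $H^i(\Gamma;-)$ commutes with direct limits of coefficient modules. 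Equivalently, for a flat ring extension $\bZ[\Gamma] \to \Lambda$ and $i \le n$ there is a natural flat base-change isomorphism $H^i(\Gamma;N) \otimes_{\bZ[\Gamma]} \Lambda \xrightarrow{\sim} H^i(\Gamma; N \otimes_{\bZ[\Gamma]} \Lambda)$. This is exactly the place where the restriction $i \le n$ is essential.

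By Britton's normal form for the HNN extension $G = H*_{\phi,t}$ (equivalently, by Bass--Serre theory), $\bZ[G]$ is free, hence flat, both as a left $\bZ[H]$-module and as a left $\bZ[A]$-module, with bases given by coset representatives of $H\backslash G$ and $A\backslash G$. Feeding this into the base-change isomorphism, together with the freeness of $\bZ[H]$ over $\bZ[A]$, I would obtain for $i \le n$ the identifications
\[
  H^i(H;\bZ[G]) \cong H^i(H;\bZ[H]) \otimes_{\bZ[H]} \bZ[G], \qquad H^i(A;\bZ[G]) \cong H^i(A;\bZ[A]) \otimes_{\bZ[A]} \bZ[G],
\]
and then check that under them the \emph{plain} restriction map $H^i(H;\bZ[G]) \to H^i(A;\bZ[G])$ is identified with the hypothesis map $H^i(H;\bZ[H]) \to H^i(A;\bZ[H])$ tensored with the flat extension $\bZ[G]$. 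Since tensoring an injection with a flat module preserves injectivity, the plain restriction with $\bZ[G]$-coefficients is then a monomorphism.

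The main obstacle is that the Mayer--Vietoris differential $\alpha$ is \emph{not} the plain restriction: for an HNN extension it is the difference of $\mathrm{res}^H_A$ and the restriction through the second endpoint of the fundamental edge of the Bass--Serre tree, a $t$-twisted map $c_t \circ \mathrm{res}^H_{\phi(A)}$. To see that this twist does not destroy injectivity, I would filter the $G$-set $G$, and hence $\bZ[G]$, according to the Bass--Serre tree --- say, by the distance from $gH$ to a fixed base vertex, or by the exponent sum of the stable letter in the Britton normal form. The two terms of $\alpha$ shift this filtration by different amounts, so $\alpha$ is triangular with the injective plain restriction as its leading term, and a finitely supported class killed by $\alpha$ must already be killed by the leading term, hence vanishes. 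In the ascending case $A = H$ that is actually applied in this paper, the hypothesis map is the identity, so the entire statement collapses to the injectivity of the operator $1 - c_t \circ \phi^*$ on the free $\bZ[H]$-module $\bZ[G]$, which is precisely this triangularity argument. Since this last step is the content of Brown's theorem, the cleanest write-up is to verify the two reductions above and then invoke it.
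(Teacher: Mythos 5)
The paper gives no proof of this lemma at all: it is quoted directly from Brown's 1985 paper (Theorem 0.1 there), so the only thing to compare your argument against is the cited source itself. Your sketch is, in substance, a faithful reconstruction of Brown's argument and it does go through: the ${\rm FP}_n$ hypothesis is used exactly to identify $H^{i}(H;\bZ[G])$ with $\bigoplus_{H\backslash G}H^{i}(H;\bZ[H])$ and $H^{i}(A;\bZ[G])$ with $\bigoplus_{A\backslash G}H^{i}(A;\bZ[A])$ in degrees $i\le n$, and the Mayer--Vietoris differential is then handled by a grading argument on the Bass--Serre tree. Two points deserve care in a full write-up. First, at degree exactly $n$ the functor $H^{n}(\Gamma;-)$ need not commute with general filtered colimits of coefficients, so ``flat base change'' is an overstatement; but for \emph{direct sums} the comparison map is still an isomorphism in all degrees $\le n$ (because $P_{n-1}$ and $P_{n}$ are finitely generated and the kernel of the differential into $\mathrm{Hom}(P_{n+1},\bigoplus_{J}M_{j})$ restricts correctly to each summand), and the free-module case is all you need. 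Second, your ``triangularity'' is best made precise via the exponent-sum homomorphism $\theta\colon G\to\bZ$ with $\theta(t)=1$ and $\theta(H)=0$: it grades the vertices $H\backslash G$ and edges $A\backslash G$ of the tree so that the restriction induced by $A\hookrightarrow H$ preserves the grade while the $t$-twisted restriction shifts it by one, distinct vertices of the same grade feed into pairwise disjoint sets of edges, and the grade-preserving block is exactly the hypothesis map; evaluating a putative kernel element at the extreme grade of its support then forces it to vanish. This covers the general (not necessarily ascending) case, not only the case $A=H$ used in the paper. Your closing remark is the right call: since this is literally Brown's theorem, citing it, as the paper does, is the cleanest option.
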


The following theorem is already proved in \cite{brown1984infinite, brown1987finiteness}, but we give its proof.

\begin{theorem} [{\cite[Theorem 4.21]{brown1987finiteness}, \cite[Theorem 7.2]{brown1984infinite}}] \label{BT-cohomology}
  For any $n \geq 2$, the Brown--Thompson group $F(n)$ satisfies $H^{i}(F(n);\bZ[F(n)]) = 0$ for all $i \geq 0$.
\end{theorem}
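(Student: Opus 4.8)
The plan is to exploit the strict ascending HNN decomposition $F(n) \cong F(n)*_{x_0}$ of Proposition \ref{BT-HNN} together with the Mayer--Vietoris sequence and Lemma \ref{HNN-cohomology}, and to prove the vanishing by induction on the cohomological degree $i$. Write $G = F(n)$, let $H$ be the base subgroup (so $H \cong F(n)$), and recall that, the decomposition being ascending, the associated subgroup $A$ equals $H$ and $\phi$ is conjugation by $x_0$. Throughout, the coefficient module is $\bZ[G]$, and the relevant exact sequence reads
\begin{align*}
\cdots \to H^i(G;\bZ[G]) \xrightarrow{\rho} H^i(H;\bZ[G]) \xrightarrow{\alpha} H^i(A;\bZ[G]) \xrightarrow{\delta} H^{i+1}(G;\bZ[G]) \to \cdots.
\end{align*}

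First I would record the reduction of coefficients. Choosing right coset representatives for $H\backslash G$ exhibits $\bZ[G]$ as a free left $\bZ[H]$-module of countable rank; since $F(n)$, hence $H$, is of type ${\rm F}_\infty$ by Theorem \ref{BT-property}(1), it is of type ${\rm FP}_\infty$, so cohomology commutes with this direct sum. Using the isomorphism $H \cong F(n)$ this gives $H^j(H;\bZ[G]) \cong \bigoplus H^j(F(n);\bZ[F(n)])$ for every $j$, and likewise $H^j(A;\bZ[G])$ since $A = H$. Thus the vanishing of $H^j(F(n);\bZ[F(n)])$ in a given degree immediately propagates to the vanishing of the coefficient groups $H^j(H;\bZ[G])$ and $H^j(A;\bZ[G])$ in that degree.

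The induction then runs as follows. For the base case $i = 0$ we have $H^0(G;\bZ[G]) = \bZ[G]^{G} = 0$, since $G$ is infinite. Assume $H^j(F(n);\bZ[F(n)]) = 0$ for all $j < i$. By the previous paragraph $H^{i-1}(A;\bZ[G]) = 0$, so the connecting homomorphism $H^{i-1}(A;\bZ[G]) \to H^i(G;\bZ[G])$ vanishes, and exactness forces $\rho\colon H^i(G;\bZ[G]) \to H^i(H;\bZ[G])$ to be injective. On the other hand, because the decomposition is ascending we have $A = H$, so the inclusion $A \hookrightarrow H$ is the identity and induces a monomorphism on $H^i(\,\cdot\,;\bZ[H])$; Lemma \ref{HNN-cohomology} then guarantees that $\alpha$ is a monomorphism in degree $i$. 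Exactness at $H^i(H;\bZ[G])$ gives $\im \rho = \ker \alpha = 0$, so $\rho = 0$, and combined with its injectivity this yields $H^i(G;\bZ[G]) = 0$, completing the induction.

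The main obstacle is the coefficient reduction step: one must justify that $H^j(H;\bZ[G])$ is governed by $H^j(H;\bZ[H])$ through the free decomposition of $\bZ[G]$, which is exactly where the finiteness property ${\rm F}_\infty$ of $F(n)$ enters, allowing cohomology to commute with the infinite direct sum. The remaining delicate point is the bookkeeping with the two injectivity inputs --- one coming from the inductive hypothesis (forcing the incoming $\delta$ to vanish, whence $\rho$ injective) and one from Lemma \ref{HNN-cohomology} (forcing $\ker\alpha = 0$, whence $\rho = 0$) --- which must be combined correctly so that $\rho$ is simultaneously injective and zero.
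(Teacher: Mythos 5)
Your proposal is correct and takes essentially the same route as the paper: induction on the degree, the strict ascending HNN decomposition of Proposition \ref{BT-HNN}, and Lemma \ref{HNN-cohomology} applied to the Mayer--Vietoris sequence with $A = H = F(n)_{\geq 1}$. The only difference is that you make explicit the coefficient-reduction step (writing $\bZ[G]$ as a free $\bZ[H]$-module and using type ${\rm FP}_\infty$ to commute cohomology with the direct sum), which the paper leaves implicit when it starts its exact sequence with $0$.
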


\begin{proof}
  We prove this theorem by induction on $i$.
  Since $F(n)$ is infinite, we have $H^{0}(F(n);\bZ[F(n)]) = 0$.
  Suppose that $H^{i-1}(F(n);\bZ[F(n)]) = 0$.
  By Proposition \ref{BT-HNN}, the Brown--Thompson group $F(n)$ is an ascending HNN extension $(F(n)_{\geq 1})*_{x_0}$.
  In this case, $G = F(n)$ and $A = H = F(n)_{\geq 1}$.
  Then the map $H^{i}(H;\bZ[H]) \to H^{i}(A;\bZ[H])$ is an isomorphism for all $i \geq 0$, in particular monomorphism.
  Also, the group $F(n)$ is of type $\textrm{F}_{\infty}$, in particular type $\textrm{FP}_{\infty}$.
  By Lemma \ref{HNN-cohomology}, in the Meyer--Vietoris sequence
  \begin{align*}
    0 \to H^{i}(F(n);\bZ[F(n)]) \to H^{i}(F(n)_{\geq 1};\bZ[F(n)]) \xrightarrow{\alpha} H^{i}(F(n)_{\geq 1};\bZ[F(n)]) \to \cdots
  \end{align*}
  the map $\alpha$ is a monomorphism.
  Hence $H^{i}(F(n);\bZ[F(n)]) = 0$.
\end{proof}

We obtain a similar result for the $n$-adic Lodha--Moore groups.

\begin{theorem} \label{LM-cohomology}
  For any $n \geq 2$, all of the $n$-adic Lodha--Moore groups $G = \G{n}, \yG{n}, \Gy{n}$ and $\yGy{n}$ satisfy $H^{i}(G;\bZ[G]) = $ for all $i \geq 0$.
\end{theorem}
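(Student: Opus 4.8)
The plan is to reproduce the inductive argument of Theorem \ref{BT-cohomology}, but to run it \emph{simultaneously} over all four groups. Concretely, I would prove $H^{i}(G;\bZ[G]) = 0$ for every $G \in \LM$ by induction on $i$. The case $i = 0$ is immediate: each of the four groups contains $F(n)$ and is therefore infinite, so $H^{0}(G;\bZ[G]) = \bZ[G]^{G} = 0$.

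For the inductive step, the key structural observation is that each of the four groups occurs as the ``big'' group in at least one of the strictly ascending HNN decompositions of Propositions \ref{LM_Flike} and \ref{LM_nonFlike}, that in every such decomposition $G \cong H *_{t}$ the associated subgroup coincides with the \emph{whole} base group $H$ (this is exactly what is checked there, where conjugation by the stable letter carries $H$ properly into itself), and that $H$ is again isomorphic to a member of $\LM$. For definiteness I would use $\G{n} \cong (\yG{n})*_{x_0}$, $\Gy{n} \cong (\yGy{n})*_{x_0}$, $\yG{n} \cong (\yGy{n})*_{x_0^{-1}}$, and $\yGy{n} \cong (\Gy{n})*_{y_0^{-1}}$; the only thing that matters is that the base of each chosen decomposition lies in $\LM$. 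Since $A = H$ here, the restriction map $H^{i}(H;\bZ[H]) \to H^{i}(A;\bZ[H])$ induced by $A \hookrightarrow H$ is the identity, hence a monomorphism, so Lemma \ref{HNN-cohomology} applies and the Meyer--Vietoris map $\alpha \colon H^{i}(H;\bZ[G]) \to H^{i}(A;\bZ[G])$ is a monomorphism for every $i$.

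Next I would feed in the induction hypothesis. As $H$ is isomorphic to one of the four groups, the hypothesis at level $i-1$ gives $H^{i-1}(H;\bZ[H]) = 0$; because $H$ is of type $\mathrm{FP}_\infty$ and $\bZ[G]$ is a free $\bZ[H]$-module (a direct sum of copies of $\bZ[H]$ indexed by the cosets $H\backslash G$), cohomology commutes with this direct sum in each degree, and so $H^{i-1}(A;\bZ[G]) = H^{i-1}(H;\bZ[G]) = 0$. Substituting into the Meyer--Vietoris sequence
\begin{align*}
  0 = H^{i-1}(A;\bZ[G]) \to H^{i}(G;\bZ[G]) \to H^{i}(H;\bZ[G]) \xrightarrow{\alpha} H^{i}(A;\bZ[G])
\end{align*}
shows that $H^{i}(G;\bZ[G])$ injects into $H^{i}(H;\bZ[G])$ with image $\ker\alpha = 0$, whence $H^{i}(G;\bZ[G]) = 0$. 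Since the base of each chosen decomposition is again one of the four groups, the four statements at level $i$ all follow from the four statements at level $i-1$, and the simultaneous induction closes.

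The hardest part, and the step I would be most careful about, is the finiteness input: I must know that \emph{all four} groups are of type $\mathrm{FP}_\infty$, both to invoke Lemma \ref{HNN-cohomology} and to commute cohomology past the infinite direct sum $\bZ[G] \cong \bigoplus_{H\backslash G} \bZ[H]$. For $\G{n}$ this is type $\mathrm{F}_\infty$ (Theorem \ref{appendix_main}), and for the remaining three I would propagate it along the non-$F$-like decompositions $\yG{n} \cong (\G{n})*_{y_0^{-1}}$, $\Gy{n} \cong (\G{n})*_{y_{n-1}}$, and $\yGy{n} \cong (\Gy{n})*_{y_0^{-1}}$, using that an HNN extension whose base and associated subgroup are of type $\mathrm{FP}_\infty$ is again of type $\mathrm{FP}_\infty$. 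I would also stress why a non-simultaneous induction cannot work: the dependence of each group on the base of its decomposition is cyclic (the smallest group $\G{n}$ forces a larger base, while the largest group $\yGy{n}$ forces a smaller one), so one genuinely must induct on $i$ while carrying all four groups along at once.
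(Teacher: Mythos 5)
Your proposal is correct and follows essentially the same route as the paper: a simultaneous induction on $i$ over all four groups in $\LM$, using the strictly ascending HNN decompositions of Propositions \ref{LM_Flike} and \ref{LM_nonFlike} together with Lemma \ref{HNN-cohomology} and the Mayer--Vietoris sequence, with type $\mathrm{F}_\infty$ supplied by Theorem \ref{appendix_main} and propagated along the decompositions. Your write-up is somewhat more explicit than the paper's (which simply refers back to the argument of Theorem \ref{BT-cohomology}) about why $H^{i-1}(A;\bZ[G])$ vanishes and how $\mathrm{FP}_\infty$ passes to all four groups, but the underlying argument is identical.
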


\begin{proof}
  Similar to Proposition \ref{BT-cohomology}, we prove this by induction on $i$.
  Since $G$ is infinite, $H^{0}(G;\bZ[G]) = 0$ holds\footnote{By Theorem \ref{BT-property} and \ref{LM-property}, the Brown--Thompson group and $n$-adic Lodha--Moore groups are one-ended, and thus we also have $H^{1}(F(n);\bZ[F(n)]) = 0$ and $H^{1}(G;\bZ[G]) = 0$.}.
  Suppose that $H^{i-1}(G;\bZ[G]) = 0$ for any $G = \G{n}, \yG{n}, \Gy{n}$ and $\yGy{n}$.
  By Theorem \ref{appendix_main}, the group $\G{n}$ is of type $\textrm{F}_{\infty}$.
  Moreover, Proposition \ref{LM_Flike} and \ref{LM_nonFlike} state that all of the $n$-adic Lodha--Moore groups are ascending HNN extensions of one of them.
  Therefore, all of them are of type $\textrm{F}_{\infty}$, and thus type $\textrm{FP}_{\infty}$.
  By the same argument as in the proof of Theorem \ref{BT-cohomology}, we obtain $H^{i}(G;\bZ[G]) = 0$.
\end{proof}


\section{Proof of theorems \ref{main_1} and \ref{main_2}} \label{section_notSCY}

First, we observe the (virtual) first Betti numbers for the groups $F(n)$ and $\G{n}$.

\begin{proposition} \label{BT-vBetti}
  For any $n \geq 2$, the Brown--Thompson group $F(n)$ satisfies $b_1(F(n)) = vb_1(F(n)) = n$.
\end{proposition}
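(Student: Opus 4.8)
The plan is to treat the two equalities separately, the first being immediate and the second requiring a short group-theoretic argument. For $b_1$, recall from Theorem \ref{BT-property}(4) that the abelianization of $F(n)$ is $\bZ^n$, so $b_1(F(n)) = \operatorname{rank}(F(n)/F(n)') = n$. Since $F(n)$ is itself a finite index subgroup of $F(n)$, we get $vb_1(F(n)) \geq b_1(F(n)) = n$ for free, and it remains only to prove the reverse inequality $vb_1(F(n)) \leq n$, that is, that $b_1(H) \leq n$ for every finite index subgroup $H \leq F(n)$.

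The key step, from which everything follows, is the claim that every finite index subgroup $H$ of $F(n)$ contains the commutator subgroup $F(n)'$. To see this I would pass to the normal core $N = \bigcap_{g \in F(n)} gHg^{-1}$, a finite index \emph{normal} subgroup of $F(n)$ contained in $H$. Then $N \cap F(n)'$ is normal in $F(n)$, hence normal in $F(n)'$, and it has finite index in $F(n)'$ since $[F(n)':N\cap F(n)'] \leq [F(n):N] < \infty$. By Theorem \ref{BT-property}(3) the group $F(n)'$ is simple, so $N \cap F(n)'$ is either trivial or all of $F(n)'$; since $F(n)'$ is infinite, a trivial subgroup cannot have finite index, and therefore $N \cap F(n)' = F(n)'$, giving $F(n)' \subseteq N \subseteq H$. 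The infiniteness of $F(n)'$ is the one point requiring slight care: it follows from torsion-freeness of $F(n)$, or self-containedly from the fact that $F(n)$ cannot be a Poincar\'e duality group, since $H^n(F(n);\bZ[F(n)]) = 0$ by Theorem \ref{BT-cohomology} rules out $F(n)$ being virtually $\bZ^n$.

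With the claim in hand the bound is immediate. Being simple and infinite, $F(n)'$ is nonabelian (the only abelian simple groups are finite), hence perfect, so $F(n)' = (F(n)')' \subseteq H'$; consequently the abelianization $H/H'$ is a quotient of $H/F(n)'$, which is a subgroup of $F(n)/F(n)' \cong \bZ^n$ and therefore free abelian of rank at most $n$. Thus $b_1(H) = \operatorname{rank}(H/H') \leq n$ for every finite index $H$, whence $vb_1(F(n)) \leq n$ and we conclude $vb_1(F(n)) = n$. The only real obstacle is establishing the containment $F(n)' \subseteq H$; once it is secured via simplicity and infiniteness of the commutator subgroup, the remainder is routine bookkeeping with abelianizations.
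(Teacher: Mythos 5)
Your proof is correct and follows essentially the same route as the paper's: both pass to the normal core $N$ of $H$, use the simplicity (and infiniteness) of $F(n)'$ to force $F(n)' \subseteq N \subseteq H$, and then bound $b_1(H)$ via the abelianization $\bZ^n$. The only cosmetic difference is that the paper additionally shows $H' = F(n)'$ (so every finite index subgroup has $H_1 \cong \bZ^n$ exactly), whereas you stop at the inequality $b_1(H) \leq n$, which suffices.
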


\begin{proposition} \label{LM-vBetti}
  For any $n \geq 2$, the $n$-adic Lodha--Moore group $\G{n}$ satisfies $b_1(\G{n}) = vb_1(\G{n}) = n+1$.
\end{proposition}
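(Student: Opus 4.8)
The plan is to combine the already-computed abelianization with the simplicity of the commutator subgroup, which is the standard mechanism for pinning down $vb_1$ for Thompson-like groups. First, $b_1(\G{n}) = n+1$ is immediate: by Theorem~\ref{LM-property} the abelianization $\G{n}/[\G{n},\G{n}]$ is isomorphic to $\bZ^{n+1}$, and its free rank is exactly $b_1(\G{n})$. Since $\G{n}$ is a finite-index subgroup of itself, this already yields $vb_1(\G{n}) \geq n+1$, so the entire content lies in the reverse inequality: I must show that every finite-index subgroup $H \leq \G{n}$ satisfies $b_1(H) \leq n+1$.

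Write $G = \G{n}$ and $G' = [G,G]$. The key structural input is that $G'$ is an \emph{infinite simple} group: it is simple by Theorem~\ref{LM-property}, and it is infinite because $G$ is non-amenable (proved in \cite{kodama2023n}), since a finite $G'$ would make $G$ virtually abelian and hence amenable. From this I would extract two facts. First, $G'$ is nonabelian (the only abelian simple groups are the finite cyclic groups $\bZ/p$), hence perfect, so $[G',G'] = G'$. Second, $G'$ has no proper finite-index subgroup: if $N \leq G'$ had finite index, then the core of $N$, namely the kernel of the action of $G'$ on the finite coset set $G'/N$, would be a finite-index normal subgroup of $G'$, hence either $G'$ itself (forcing $N = G'$) or trivial (forcing $G'$ to embed in a finite symmetric group, contradicting infiniteness).

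Next, for a finite-index subgroup $H \leq G$, I would trap $H$ between $G'$ and $G$. Since $H \cap G'$ has finite index in $G'$, the second fact gives $H \cap G' = G'$, that is, $G' \leq H$. Then $[H,H] \leq [G,G] = G'$ automatically, while the first fact together with $G' \leq H$ gives $G' = [G',G'] \leq [H,H]$; hence $[H,H] = G'$. Therefore the abelianization of $H$ is $H/[H,H] = H/G'$, which is a subgroup of $G/G' \cong \bZ^{n+1}$ of index $[G:H] < \infty$. A finite-index subgroup of $\bZ^{n+1}$ is free abelian of rank $n+1$, so $b_1(H) = n+1$. Combined with the lower bound, this gives $vb_1(\G{n}) = b_1(\G{n}) = n+1$.

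I expect the only real subtlety to be the two consequences of ``infinite simple,'' and in particular the verification that $G'$ is genuinely infinite; once non-amenability of $\G{n}$ is invoked this step is routine, and the remainder is the standard sandwiching $G' \leq H \leq G$ that forces $H^{ab}$ to be a finite-index subgroup of $\bZ^{n+1}$. The same template proves Proposition~\ref{BT-vBetti} verbatim, replacing $n+1$ by $n$ and citing Theorem~\ref{BT-property} for the abelianization $\bZ^n$ and the simplicity of the commutator subgroup.
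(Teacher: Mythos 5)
Your proposal is correct and follows essentially the same route as the paper: both arguments combine the known abelianization $\G{n}/\G{n}' \cong \bZ^{n+1}$ with the simplicity of $\G{n}'$ to force $\G{n}' \leq H$ and $H' = \G{n}'$ for every finite-index subgroup $H$, so that $H^{\mathrm{ab}}$ is a finite-index subgroup of $\bZ^{n+1}$. The only cosmetic difference is that the paper passes through the normal core of $H$ and invokes triviality of the center and non-virtual-abelianness where you invoke infiniteness of $\G{n}'$ (via non-amenability) to rule out the degenerate case; both are valid.
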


\begin{proof}[Proof of Propositions \ref{BT-vBetti} and \ref{LM-vBetti}]
  By Theorem \ref{BT-property} and \ref{LM-property}, we have $H_1(F(n); \bZ) \cong \bZ^{n}$ and $H_1(\G{n}; \bZ) \cong \bZ^{n+1}$.
  Let $H$ be a finite index subgroup of $G = F(n)$ or $\G{n}$.
  Then $N \coloneqq \bigcap_{g \in G} g^{-1}Hg$ is a finite index normal subgroup of both $H$ and $G$.
  Furthermore, the center of $G$ is trivial and the commutator subgroup $G'$ is simple.
  Therefore the quotient ${G}/{N}$ is abelian, that is, $N$ contains $G'$.
  On the other hand, the commutator subgroup $N' (\trianglelefteq H')$ is a normal subgroup of $G'$.
  Since $G$ is not virtually abelian, $N$ is not abelian.
  Again, by the simplicity of $G'$, we have $N' = H' = G'$.
  Hence we obtain
  \begin{align*}
    H_1(N; \bZ) = {N}/{N'} = {N}/{G'} \trianglelefteq {G}/{G'} = H_1(G; \bZ).
  \end{align*}
  Similarly, $H_1(H; \bZ) \trianglelefteq H_1(G; \bZ)$ holds.
  As they are finite index normal subgroups, $H_1(N; \bZ)$ and $H_1(H; \bZ)$ are both isomorphic to $H_1(G; \bZ)$.
\end{proof}

From these propositions, we may consider the cases only $F(3), F(4), \G{2} = G_0$ and $\G{3}$, but we prove the facts below for a general case.

Next, We consider the first $L^2$-Betti number of groups.
Roughly speaking, $L^2$-homology is a kind of infinite-dimensional homology theory by using a group von Neumann algebra, and the $L^2$-Betti number is defined as its von Neumann dimension.
For details of $L^2$-homology and $L^2$-invariants, see \cite{luck2001L2invariant} for instance.
The following lemma is useful to check that the first $L^2$-Betti number vanishes.

\begin{lemma} [{\cite[Lemma 2.1]{hillman2002manifold}}]
  Let $G$ be a finitely presented group which is an ascending HNN extension with a finitely generated base.
  Then $b_1^{(2)}(G) = 0$.
\end{lemma}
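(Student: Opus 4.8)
The plan is to run the $L^2$-analogue of the Mayer--Vietoris sequence attached to the HNN structure, with coefficients in the group von Neumann algebra $\mathcal{N}(G)$, and thereby reduce the vanishing of $b_1^{(2)}(G)$ to the injectivity of a single self-map. Write $G = H *_{\phi,t}$ as an ascending HNN extension, so that the associated subgroup is all of $H$ and $\phi \colon H \to H$ is a monomorphism. The computation below uses that $b_1^{(2)}(H)$ is finite, which holds whenever $H$ is of type $\mathrm{FP}_2$, and in particular in every application of this lemma in the present paper, where the base $H$ is one of $F(n)$ or an $n$-adic Lodha--Moore group and hence of type $\mathrm{F}_\infty$ by Theorems \ref{BT-property}, \ref{LM-property}, and \ref{appendix_main}.

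Because the associated subgroup is the whole base, the two edge inclusions of the HNN extension are $\mathrm{id}_H$ and the $t$-twist of $\phi$, so the $L^2$-Mayer--Vietoris sequence takes the form
\begin{align*}
  \cdots \to H_1^{(2)}(H) \xrightarrow{\ \psi_1\ } H_1^{(2)}(H) \to H_1^{(2)}(G) \to H_0^{(2)}(H) \xrightarrow{\ \psi_0\ } H_0^{(2)}(H) \to \cdots,
\end{align*}
where all homology groups carry $\mathcal{N}(G)$-coefficients and $\psi_j = \mathrm{id}_* - (t\phi)_*$. By L\"uck's induction principle one has $\dim_{\mathcal{N}(G)} H_j^{(2)}(H;\mathcal{N}(G)) = b_j^{(2)}(H)$, and since $H$ is infinite the module $H_0^{(2)}(H)$ has dimension $b_0^{(2)}(H) = 0$. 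Additivity of the von Neumann dimension along the sequence then yields
\begin{align*}
  b_1^{(2)}(G) = \dim_{\mathcal{N}(G)} \operatorname{coker}(\psi_1) = b_1^{(2)}(H) - \dim_{\mathcal{N}(G)} \im(\psi_1).
\end{align*}

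Thus the whole statement reduces to showing that $\psi_1 = \mathrm{id}_* - (t\phi)_*$ is injective in the dimension sense, i.e.\ $\dim_{\mathcal{N}(G)} \ker(\psi_1) = 0$; this is the step I expect to be the main obstacle, and the only place where the \emph{ascending} hypothesis (as opposed to a general HNN splitting) is used. I would exploit the orthogonal decomposition of $\ell^2(G)$ over the cosets of $K = \ker(G \to \bZ)$, with respect to which the stable letter $t$ acts as a bilateral shift of the resulting $\bZ$-grading, while $\phi$, being injective, makes $(t\phi)_*$ strictly raise the grading. A vector fixed by $t\phi$ would then have to be invariant under this shift and hence vanish in $\ell^2$ --- the same mechanism by which $\mathrm{id} - S$ is injective for the bilateral shift $S$ on $\ell^2(\bZ)$ --- giving $\dim_{\mathcal{N}(G)} \ker(\psi_1) = 0$. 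Substituting into the displayed formula gives $b_1^{(2)}(G) = 0$, as desired. The analytic heart of the argument is thus this injectivity; the homological bookkeeping and the appeal to L\"uck's dimension theory are routine.
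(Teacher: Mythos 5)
The paper itself does not prove this lemma---it is quoted from Hillman---so your proposal has to stand on its own, and it does not: the Mayer--Vietoris bookkeeping is fine, but everything has been pushed into the claim that $\psi_1=\mathrm{id}-(t\phi)_*$ has von Neumann--dimension-zero kernel on $H_1(H;\mathcal{N}(G))$, and the heuristic you offer for that step is not a proof. First, $H_1(H;\mathcal{N}(G))$ is an algebraic subquotient of $\mathcal{N}(G)$-modules, not a Hilbert space carrying an orthogonal $\bZ$-grading shifted by $t$; to run a ``bilateral shift'' argument you would have to produce such a decomposition on the homology module itself and check it is compatible with the dimension function. Second, the operator in question is not a bare shift: it is a shift composed with $\phi_*$, and it is the $t$-twist, not the injectivity of $\phi$, that moves the grading---$\phi_*$ on $L^2$-homology need not be injective, isometric, or norm-controlled, so a ``fixed vector'' $v=\sum v_n$ with $v_n=(t\phi)_*v_{n-1}$ cannot be killed by the lowest-degree argument (elements of the completed module need not have a lowest nonzero component) nor by the $\ell^2(\bZ)$ spectral argument (which breaks for operator-weighted shifts). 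This injectivity is the entire content of the theorem, so labelling it ``the main obstacle'' and leaving it at a heuristic leaves a genuine gap. You also quietly strengthen the hypothesis from ``finitely generated base'' to ``base of type $\mathrm{FP}_2$'' in order to have the Mayer--Vietoris terms and a finite $b_1^{(2)}(H)$; that is harmless for the applications in this paper but does not prove the lemma as stated.

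For contrast, the proof in the cited source is elementary and avoids all of this. Let $G_k$ be the preimage of $k\bZ$ under the map $G\to\bZ$ sending $t\mapsto 1$ and $H\mapsto 0$. Since the conjugates $t^{m}Ht^{-m}$ form an increasing chain, one checks that $G_k=\langle H,t^{k}\rangle$, so $G_k$ is generated by $d(H)+1$ elements, where $d(H)$ is the number of generators of $H$; hence $b_1^{(2)}(G_k)\leq d(H)+1$ uniformly in $k$. Multiplicativity of $L^2$-Betti numbers under finite-index subgroups gives
\begin{align*}
  b_1^{(2)}(G)=\frac{1}{k}\,b_1^{(2)}(G_k)\leq\frac{d(H)+1}{k}\xrightarrow{\ k\to\infty\ }0.
\end{align*}
This uses only that $H$ is finitely generated, and no homological algebra at infinity. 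If you want to salvage your route, the honest comparison is that your $\psi_1$-injectivity claim is essentially equivalent to the theorem itself, whereas the finite-cover argument proves it in three lines; I would either adopt the latter or supply a genuine operator-theoretic proof of $\dim_{\mathcal{N}(G)}\ker(\mathrm{id}-(t\phi)_*)=0$, which I do not believe is routine.
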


By this lemma and facts in Section \ref{section_HNN}, we obtain the following results:

\begin{proposition} \label{BT-l2-Betti}
  For any $n \geq 2$, the Brown--Thompson group $F(n)$ satisfies $b_1^{(2)}(F(n)) = 0$.
\end{proposition}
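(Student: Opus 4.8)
The plan is to invoke the Hillman lemma just stated, so the entire content of the proof reduces to verifying its two hypotheses for $F(n)$. First I would recall that the lemma requires $G$ to be (a) finitely presented and (b) an ascending HNN extension with a finitely generated base. Both of these are already established earlier in the excerpt, so the proof is essentially a one-line citation assembly rather than a genuine computation.

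Concretely, by Theorem \ref{BT-property}(1) the Brown--Thompson group $F(n)$ has type $\mathrm{F}_\infty$ and is in particular finitely presented, which gives hypothesis (a). For hypothesis (b), Proposition \ref{BT-HNN} exhibits $F(n)$ as a (strictly) ascending HNN extension $F(n) \cong F(n)*_{x_0}$ of the form $(F(n)_{\geq 1})*_{x_0}$, where the base subgroup $F(n)_{\geq 1} = \langle x_1, x_2, \ldots \rangle$ is isomorphic to $F(n)$ itself and is therefore finitely generated. Thus both hypotheses of the lemma hold.

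Applying the lemma directly then yields $b_1^{(2)}(F(n)) = 0$, which is the claim. I do not expect any real obstacle here: the only point that deserves a sentence of care is confirming that the base $F(n)_{\geq 1}$ is finitely generated, and this is immediate since it is isomorphic to $F(n)$, which is finitely generated by Theorem \ref{BT-property}(1). The proof would read simply as follows:

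\begin{proof}
  By Theorem \ref{BT-property}, the group $F(n)$ is finitely presented.
  Moreover, by Proposition \ref{BT-HNN}, $F(n)$ is an ascending HNN extension of the form $(F(n)_{\geq 1})*_{x_0}$, whose base subgroup $F(n)_{\geq 1} \cong F(n)$ is finitely generated.
  Hence the previous lemma implies $b_1^{(2)}(F(n)) = 0$.
\end{proof}
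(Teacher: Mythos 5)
Your proposal is correct and matches the paper's argument exactly: the paper derives Proposition \ref{BT-l2-Betti} by combining the Hillman lemma with the HNN decomposition of Proposition \ref{BT-HNN}, just as you do. Your only addition is to spell out explicitly that the base $F(n)_{\geq 1}\cong F(n)$ is finitely generated, which the paper leaves implicit.
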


\begin{proposition} \label{LM-l2-Betti}
  For any $n \geq 2$, all of the $n$-adic Lodha--Moore groups $G = \G{n}, \yG{n}, \Gy{n}$ and $\yGy{n}$ satisfy $b_1^{(2)}(G) = 0$.
\end{proposition}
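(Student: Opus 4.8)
The plan is to invoke Hillman's lemma \cite[Lemma 2.1]{hillman2002manifold} quoted just above: any finitely presented group that splits as an ascending HNN extension with finitely generated base has vanishing first $L^2$-Betti number. Thus for each $G \in \LM$ it suffices to exhibit a single ascending HNN decomposition of $G$ whose base is finitely generated, and to know that $G$ itself is finitely presented. Both ingredients are already available from Section \ref{section_HNN}.

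First I would record that all four $n$-adic Lodha--Moore groups are of type $\textrm{F}_\infty$, hence finitely presented. For $\G{n}$ this is Theorem \ref{appendix_main}; for the remaining three it follows exactly as in the proof of Theorem \ref{LM-cohomology}, since Propositions \ref{LM_Flike} and \ref{LM_nonFlike} present each of them as an ascending HNN extension of a group already of type $\textrm{F}_\infty$, and type $\textrm{F}_\infty$ is inherited by ascending HNN extensions over a type $\textrm{F}_\infty$ base. In particular each of the four groups is finitely generated, so every base appearing in those propositions is finitely generated as well.

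Next, for each group I would select one concrete decomposition: for instance $\G{n} \cong (\yG{n})*_{x_0}$ (case (\ref{HNN1})), $\yG{n} \cong (\G{n})*_{y_0^{-1}}$ (case (\ref{HNN5})), $\Gy{n} \cong (\G{n})*_{y_{n-1}}$ (case (\ref{HNN6})), and $\yGy{n} \cong (\Gy{n})*_{y_0^{-1}}$ (case (\ref{HNN7})). Each is a strictly ascending HNN extension whose base is one of the four groups, hence finitely generated by the previous step, and whose total group is finitely presented. Applying Hillman's lemma to each decomposition yields $b_1^{(2)}(G) = 0$ in all four cases.

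I do not expect a genuine obstacle here; the argument is essentially bookkeeping once Section \ref{section_HNN} is in place. The only point requiring care is that the chosen base be finitely generated, which is precisely why I first upgrade ``finitely presented'' to ``type $\textrm{F}_\infty$'' for all four groups rather than for $\G{n}$ alone. An alternative that avoids the appendix for the three auxiliary groups would be to verify finite generation of the bases directly from the infinite generating sets of Section \ref{subsection_LM}, but routing through type $\textrm{F}_\infty$ is cleaner and reuses the reasoning of Theorem \ref{LM-cohomology} verbatim.
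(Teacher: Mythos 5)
Your proposal is correct and follows essentially the same route as the paper, which proves this proposition simply by citing Hillman's lemma together with the ascending HNN decompositions of Propositions \ref{LM_Flike} and \ref{LM_nonFlike}. The extra care you take in upgrading all four groups to type $\textrm{F}_\infty$ (to guarantee finite presentability of the total groups and finite generation of the bases) mirrors exactly the reasoning the paper uses in the proof of Theorem \ref{LM-cohomology}.
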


Finally, we need to show that the Hausmann--Weinberger invariants of the Brown--Thompson group and Lodha--Moore groups are positive.
In order to do this, we use the following lemma:

\begin{lemma} [{\cite[Theorem 6]{eckmann1997manifold}}] \label{PD}
  Let $G$ be an infinite, finitely presented, and not virtually-cyclic group with $b_1^{(2)}(G) = 0$ and $H^{2}(G; \bZ[G]) = 0$.
  Let $M$ be a $4$-manifold with $\pi_1(M) \cong G$ and $\chi(M) = 0$.
  Then $M$ is an Eilenberg--Mac Lane space $K(G,1)$ of G, and G is a Poincar\'e duality group of dimension $4$.
\end{lemma}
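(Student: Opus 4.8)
The plan is to show that the universal cover $\widetilde{M}$ is contractible, so that $M$ is a $K(G,1)$; the Poincaré duality statement for $G$ is then immediate, since a closed oriented aspherical $4$-manifold exhibits its fundamental group as a $PD_4$-group. The engine is a comparison of two computations of the homology of $\widetilde{M}$: one via $L^2$-Betti numbers and one via ordinary Poincaré duality with $\bZ[G]$-coefficients.

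First I would pin down the $L^2$-Betti numbers $b_i^{(2)}(M) := b_i^{(2)}(\widetilde{M})$. Atiyah's $L^2$-index theorem gives $\chi(M) = \sum_{i=0}^{4} (-1)^i b_i^{(2)}(M)$, and $L^2$-Poincaré duality for the closed oriented manifold $M$ gives $b_i^{(2)}(M) = b_{4-i}^{(2)}(M)$. Since $G$ is infinite, $b_0^{(2)}(M) = b_4^{(2)}(M) = 0$; since attaching cells of dimension $\geq 3$ does not change the first $L^2$-Betti number, $b_1^{(2)}(M) = b_1^{(2)}(G) = 0$ and hence $b_3^{(2)}(M) = 0$. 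Thus $\chi(M) = b_2^{(2)}(M) \geq 0$, and the hypothesis $\chi(M) = 0$ forces $b_2^{(2)}(M) = 0$, so all $L^2$-Betti numbers of $\widetilde{M}$ vanish. I would also record that the hypotheses force $G$ to be one-ended: it is infinite, not two-ended (as it is not virtually cyclic), and not infinitely-ended (an infinitely-ended finitely presented group splits over a finite subgroup and has $b_1^{(2)} > 0$). Consequently $H^{1}(G;\bZ[G]) = 0$.

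Next I would transport this to ordinary homology. Writing $C_* = C_*(\widetilde{M})$ as a finite complex of finitely generated free $\bZ[G]$-modules, Poincaré duality with $\bZ[G]$-coefficients gives $H^{i}(M;\bZ[G]) \cong H_{4-i}(\widetilde{M})$, while the classifying map $c\colon M \to BG$ induces $c^*\colon H^{i}(G;\bZ[G]) \to H^{i}(M;\bZ[G])$ that is an isomorphism for $i \leq 1$ and a monomorphism for $i = 2$ (because $BG$ is built from $M$ by attaching cells of dimension $\geq 3$). From $H_1(\widetilde{M}) = 0$, $H_4(\widetilde{M}) = 0$ (an open $4$-manifold), and $H_3(\widetilde{M}) \cong H^{1}(M;\bZ[G]) \cong H^{1}(G;\bZ[G]) = 0$, the only possibly nonzero reduced homology of $\widetilde{M}$ is $H_2(\widetilde{M}) \cong \pi_2(M) \cong H^{2}(M;\bZ[G])$.

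The crux, and the step I expect to be the main obstacle, is to show $\pi_2(M) = 0$. The injection $H^{2}(G;\bZ[G]) \hookrightarrow H^{2}(M;\bZ[G])$ does not by itself suffice, so I would exploit the self-duality of the finite free complex $C_*$ (whose homology is now concentrated in degrees $0$ and $2$) to produce a short exact sequence of $\bZ[G]$-modules $0 \to H^{2}(G;\bZ[G]) \to \pi_2(M) \to P \to 0$ in which $P$ is a finitely generated projective $\bZ[G]$-module. The first term vanishes by hypothesis, so $\pi_2(M) \cong P$ is projective; moreover $\dim_{\mathcal{N}(G)}\bigl(\mathcal{N}(G) \otimes_{\bZ[G]} P\bigr) = b_2^{(2)}(M) = 0$. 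Since the von Neumann dimension is faithful on finitely generated projective $\bZ[G]$-modules, $P = 0$, whence $\pi_2(M) = 0$. Extracting this exact sequence cleanly from the Poincaré duality of $C_*$ is the delicate point. With $\pi_2(M) = 0$ and $H_3(\widetilde{M}) = H_4(\widetilde{M}) = 0$, the simply connected complex $\widetilde{M}$ has vanishing reduced homology, hence is contractible by Hurewicz and Whitehead; thus $M \simeq K(G,1)$ and $G$ is a $PD_4$-group, as claimed.
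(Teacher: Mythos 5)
First, a point of reference: the paper does not prove this lemma at all --- it is quoted verbatim as \cite[Theorem 6]{eckmann1997manifold}, so your attempt can only be measured against Eckmann's original argument. Your first two paragraphs are correct and are exactly the standard opening of that argument: Atiyah's formula plus $L^2$-Poincar\'e duality give $\chi(M)=b_2^{(2)}(M)\geq 0$ and hence the vanishing of all $L^2$-Betti numbers of $\widetilde M$; one-endedness (hence $H^1(G;\bZ[G])=0$) follows from ``infinite, not virtually cyclic, and $b_1^{(2)}=0$''; and equivariant Poincar\'e duality together with the $2$-connectedness of the classifying map reduces everything to showing $\pi_2(M)=H_2(\widetilde M)=0$, after which Hurewicz--Whitehead finishes the asphericity and the $PD_4$ statement.

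The gap is exactly where you flagged it. The universal-coefficient spectral sequence for $H^*(M;\bZ[G])$ (using $H_1(\widetilde M)=0$) yields a short exact sequence
\begin{equation*}
0 \to H^{2}(G;\bZ[G]) \to H^{2}(M;\bZ[G]) \to Q \to 0,
\end{equation*}
but the third term $Q$ is $\ker\bigl(d_3\colon \operatorname{Hom}_{\bZ[G]}(\pi_2(M),\bZ[G]) \to H^3(G;\bZ[G])\bigr)$, i.e.\ a submodule of the $\bZ[G]$-dual of $\pi_2(M)$. There is no reason for this module to be finitely generated or projective over $\bZ[G]$ ($\pi_2(M)$ itself need not be finitely generated at this stage), so the final step of your plan --- ``$\dim_{\mathcal{N}(G)}$ is faithful on finitely generated projectives, hence $P=0$'' --- cannot be applied as stated; asserting the existence of such a $P$ is the missing idea, not a routine extraction from self-duality. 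What Eckmann actually does is use $H^2(G;\bZ[G])=0$ to conclude that $\pi_2(M)$ \emph{embeds into its own $\bZ[G]$-dual}, and then kills that dual using the vanishing of the second $L^2$-Betti number: a nonzero homomorphism $\pi_2(M)\to\bZ[G]$ would, after passing to von Neumann (or $\ell^2$) coefficients, produce a nonzero finitely generated submodule of $\mathcal{N}(G)$ of von Neumann dimension zero, which is impossible since such submodules are projective and the dimension is faithful on them. So your ingredients are the right ones, but they must be applied to $\operatorname{Hom}_{\bZ[G]}(\pi_2(M),\bZ[G])$ rather than to a hypothetical projective complement inside $\pi_2(M)$; as written, the decisive step is not established.
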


A group $G$ is a \textit{Poincar\'e duality group} of dimension $n$ for some integer $n > 0$ if it satisfies $H^i(G; \bZ[G]) \cong H_{n-i}(G; \bZ[G])$ for all $i \geq 0$, that is,
\begin{align*}
  H^i(G; \bZ[G]) \cong \left\{
  \begin{array}{ll}
    \bZ & (i = n)    \\
    0   & (i \neq n)
  \end{array}.
  \right.
\end{align*}

\begin{proposition}\label{BT-q-positive}
  For any $n \geq 2$, the Brown--Thompson group $F(n)$ satisfies $q(F(n)) > 0$.
\end{proposition}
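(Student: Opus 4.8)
The plan is to argue by contradiction, ruling out every closed $4$-manifold $M$ with $\pi_1(M) \cong F(n)$ and $\chi(M) \leq 0$. The two available inputs are the vanishing $b_1^{(2)}(F(n)) = 0$ (Proposition \ref{BT-l2-Betti}) and the total vanishing $H^i(F(n);\bZ[F(n)]) = 0$ for all $i$ (Theorem \ref{BT-cohomology}). The strategy is to convert the first into a lower bound $\chi(M) \geq 0$ and the second into the exclusion of the extremal case $\chi(M) = 0$, so that $\chi(M)$ is forced to be a positive integer.

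First I would establish that $\chi(M) \geq 0$ for every such $M$. By Atiyah's $L^2$-index theorem the Euler characteristic equals the alternating sum of the $L^2$-Betti numbers, $\chi(M) = \sum_{i=0}^{4} (-1)^i b_i^{(2)}(M)$. Since $F(n)$ is infinite we have $b_0^{(2)}(M) = 0$; since the first $L^2$-Betti number depends only on the fundamental group, $b_1^{(2)}(M) = b_1^{(2)}(F(n)) = 0$ by Proposition \ref{BT-l2-Betti}; and by $L^2$-Poincar\'e duality on the closed oriented manifold $M$ one gets $b_4^{(2)}(M) = b_0^{(2)}(M) = 0$ and $b_3^{(2)}(M) = b_1^{(2)}(M) = 0$. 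Hence $\chi(M) = b_2^{(2)}(M) \geq 0$.

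Next, suppose toward a contradiction that some $M$ realizes $\chi(M) \leq 0$; by the previous paragraph this forces $\chi(M) = 0$. Now I would invoke Lemma \ref{PD}: the group $F(n)$ is infinite and finitely presented (Theorem \ref{BT-property}), not virtually cyclic (it is one-ended with $b_1(F(n)) = n \geq 2$ by Proposition \ref{BT-vBetti}), satisfies $b_1^{(2)}(F(n)) = 0$, and satisfies $H^2(F(n);\bZ[F(n)]) = 0$ by Theorem \ref{BT-cohomology}. The lemma then asserts that $M$ is a $K(F(n),1)$ and that $F(n)$ is a Poincar\'e duality group of dimension $4$. But a $\mathrm{PD}_4$ group must satisfy $H^4(F(n);\bZ[F(n)]) \cong \bZ$, directly contradicting Theorem \ref{BT-cohomology}, which gives $H^4(F(n);\bZ[F(n)]) = 0$. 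Therefore no $M$ with $\chi(M) = 0$ exists; combined with $\chi(M) \geq 0$ and the integrality of $\chi$, this yields $\chi(M) \geq 1$ for all admissible $M$, i.e. $q(F(n)) > 0$.

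The genuinely substantive ingredient, Theorem \ref{BT-cohomology}, is already in hand, so the remaining work is bookkeeping of standard $L^2$-invariant facts. The step requiring the most care is the lower bound $\chi(M) \geq 0$: one must be sure that $b_1^{(2)}(M) = b_1^{(2)}(F(n))$ even though $M$ need not be aspherical, which holds because cells of dimension $\geq 3$ affect neither $H_0$ nor $H_1$, so the first $L^2$-Betti number is already computed on the presentation $2$-complex of $F(n)$; and one must apply $L^2$-Poincar\'e duality to the closed oriented $M$. Everything else is a direct application of the cited results, and in particular the same scheme will carry over verbatim to $\G{n}$ in the analogue of this proposition.
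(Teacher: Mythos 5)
Your proof is correct and follows essentially the same route as the paper: both establish $\chi(M)=b_2^{(2)}(M)\geq 0$ via the $L^2$-Euler characteristic formula together with $b_0^{(2)}=b_1^{(2)}=0$ and $L^2$-Poincar\'e duality, and then exclude $\chi(M)=0$ by invoking Lemma \ref{PD} and contradicting $H^4(F(n);\bZ[F(n)])=0$ from Theorem \ref{BT-cohomology}. Your additional remarks verifying the hypotheses of Lemma \ref{PD} (not virtually cyclic) and justifying $b_1^{(2)}(M)=b_1^{(2)}(F(n))$ are correct but are left implicit in the paper's version.
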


\begin{proposition} \label{LM-q-positive}
  For any $n \geq 2$, all of the $n$-adic Lodha--Moore groups $G = \G{n}, \yG{n}, \Gy{n}$ and $\yGy{n}$ satisfy $q(G) > 0$.
\end{proposition}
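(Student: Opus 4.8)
The plan is to prove both propositions at once, since the argument uses only three common ingredients: the vanishing $H^{i}(G;\bZ[G]) = 0$ for all $i$, the vanishing $b_1^{(2)}(G) = 0$, and one-endedness. These hold for $F(n)$ by Theorems \ref{BT-property} and \ref{BT-cohomology} together with Proposition \ref{BT-l2-Betti}, and for each $G \in \LM$ by Theorem \ref{LM-cohomology} and Proposition \ref{LM-l2-Betti}. Writing $G$ for any one of these groups, I would reduce $q(G) > 0$ to two claims: (a) every closed oriented $4$-manifold $M$ with $\pi_1(M) \cong G$ has $\chi(M) \geq 0$, so $q(G) \geq 0$; and (b) no such $M$ has $\chi(M) = 0$, so $q(G) \neq 0$. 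As $\chi(M)$ is an integer and such $M$ exist (because $G$ is finitely presented), (a) and (b) together give $q(G) \geq 1$.

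For (a) I would use the $L^2$-Euler characteristic. Since $G$ is infinite, $b_0^{(2)}(M) = 0$, and Poincar\'e duality for $L^2$-Betti numbers gives $b_4^{(2)}(M) = 0$ and $b_3^{(2)}(M) = b_1^{(2)}(M)$, so Atiyah's formula collapses to
\begin{align*}
  \chi(M) = \sum_{i=0}^{4}(-1)^i b_i^{(2)}(M) = b_2^{(2)}(M) - 2\,b_1^{(2)}(M).
\end{align*}
The first $L^2$-Betti number is determined by the $2$-skeleton of $M$, hence depends only on $\pi_1(M)$, so $b_1^{(2)}(M) = b_1^{(2)}(G) = 0$ by Proposition \ref{BT-l2-Betti} (resp.\ Proposition \ref{LM-l2-Betti}); therefore $\chi(M) = b_2^{(2)}(M) \geq 0$.

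For (b) I would argue by contradiction via Lemma \ref{PD}. First I would verify its hypotheses for $G$: it is infinite and finitely presented; it is not virtually cyclic, since it is one-ended (for $F(n)$ and $\G{n}$ by Theorems \ref{BT-property} and \ref{LM-property}, and for the three remaining Lodha--Moore groups $\yG{n}, \Gy{n}, \yGy{n}$ from $H^1(G;\bZ[G]) = 0$ in Theorem \ref{LM-cohomology}); it has $b_1^{(2)}(G) = 0$; and $H^2(G;\bZ[G]) = 0$ by Theorem \ref{BT-cohomology} (resp.\ Theorem \ref{LM-cohomology}). Then, were there an $M$ with $\pi_1(M) \cong G$ and $\chi(M) = 0$, Lemma \ref{PD} would force $G$ to be a Poincar\'e duality group of dimension $4$, whence $H^{4}(G;\bZ[G]) \cong \bZ$. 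This contradicts $H^{4}(G;\bZ[G]) = 0$ from the same cohomology computation, so no such $M$ exists and $q(G) \neq 0$.

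Together, (a) and (b) yield $q(G) > 0$. Essentially all the substantive work has been front-loaded into the group-cohomology vanishing and the $L^2$-Betti number vanishing established in the previous sections; the only point needing care is checking the hypotheses of Lemma \ref{PD} uniformly across all five groups. I expect the non-virtual-cyclicity — read off from one-endedness, i.e.\ $H^1(G;\bZ[G]) = 0$ — to be the sole genuinely delicate verification, rather than any fresh calculation.
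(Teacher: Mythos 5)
Your argument is correct and follows essentially the same route as the paper: the $L^2$-Euler characteristic computation gives $\chi(M)=b_2^{(2)}(M)\geq 0$, and Lemma \ref{PD} combined with the vanishing of $H^{4}(G;\bZ[G])$ from Theorem \ref{LM-cohomology} rules out $\chi(M)=0$. The only difference is that you spell out the verification of the remaining hypotheses of Lemma \ref{PD} (in particular non-virtual-cyclicity via one-endedness), which the paper leaves implicit.
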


\begin{proof}[Proof of Propositions \ref{BT-q-positive} and \ref{LM-q-positive}]
  Set $G = F(n), \G{n}, \yG{n}, \Gy{n}$ or $\yGy{n}$.
  By Proposition \ref{BT-l2-Betti} and \ref{LM-l2-Betti}, we have $b_1^{(2)}(G) = 0$.
  Also, since $G$ is infinite, $b_0^{(2)}(G) = 0$ holds.
  Let $M$ be a 4-manifold with $\pi_1(M) \cong G$.
  By a well-known fact for the $L^2$-Betti number, we have
  \begin{align*}
    \chi(M) & = 2b_0^{(2)}(M) - 2b_1^{(2)}(M) + b_2^{(2)}(M)                        \\
            & = 2b_0^{(2)}(G) - 2b_1^{(2)}(G) + b_2^{(2)}(M) = b_2^{(2)}(M) \geq 0.
  \end{align*}
  Therefore $q(G) \geq 0$.
  Suppose that a 4-manifold $M$ above has $\chi(M) = 0$.
  By Theorem \ref{LM-cohomology}, we have $H^{i}(G; \bZ[G]) = 0$ for all $i \geq 0$.
  Thus, $M$ is an Eilenberg--Mac Lane space $K(G, 1)$ and $G$ is a Poincar\'e duality group of dimension 4.
  Namely, $H^{4}(G; \bZ[G]) \cong \bZ$, which is contradiction.
\end{proof}

\begin{remark}
The result \cite[Theorem 0.2]{napier2006thompson} by Napier and Ramachandran implies that the $n$-adic Lodha--Moore group $\G{n}$ is not K\"ahler for all $n \geq 2$.
Indeed, from Theorem \ref{LM-property} (\ref{LM-commutator}) and (\ref{LM-center}), any non-trivial normal subgroup of $\G{n}$ contains the commutator subgroup $\G{n}'$.
Moreover, by Propposition \ref{LM_Flike}, $\G{n}$ is a strictly ascending HNN extension.
\end{remark}

Finally, we give a problem about the Hausmann--Weinberger invariant.
There is an inequality \cite[Theorem 1]{hausmann1985weinberger} of the Hausmann--Weinberger invariant of a group $G$, that is,
\begin{align*}
  2 - 2 b_1(G) + b_2(G) \leq q(G) \leq 2 - 2 \textrm{def}(G),
\end{align*}
where $\textrm{def}(G)$ denotes the deficiency of $G$.
Namely, the maximum of the number of generators minus the number of relations over all finite presentations of $G$.
From this inequality and the fact $\textrm{def}(F) = 0$, Friedl and Vidussi \cite[Theorem]{friedl2015thompson} showed that $0 < q(F) \leq 2$.
However, this invariant has not yet been computed completely.
Note that $G_0$ has a well-known presentation with three generators and nine relations \cite{lodha2016nonamenable}. 
Hence the Hausmann--Weinberger invariant for the Lodha--Moore group is even more unclear. 
\begin{problem}
Determine the Hausmann--Weinberger invariant for the Brown--Thompson group and the ($n$-adic) Lodha--Moore groups.
\end{problem}


\section{Homotopy groups at infinity} \label{section_homotopy}

In this section, we prove the triviality of homotopy groups at infinity for $n$-adic Lodha--Moore groups, which is the fourth condition of Geoghegan's conjecture.

Let $G$ be a group of type $\textrm{F}_n$ for some $n \geq 0$.
Let $X$ be an Eilenberg--Mac Lane space $K(G,1)$ with finite $n$-skeleton, and $\widetilde{X}$ its universal covering.
A group $G$ is \textit{$(n-1)$-connected at infinity} if for any compact subset $C \subset \widetilde{X}$, there exists a compact subset $C \subset D \subset \widetilde{X}$ such that any map $S^{k} \to \widetilde{X} \setminus D$ extends to a map $B^{k+1} \to \widetilde{X} \setminus C$ for all $k < n$.
In other words, the natural map $\pi_k(\widetilde{X} \setminus D) \to \pi_k(\widetilde{X} \setminus C)$ induced by the inclusion is trivial.

Note that $G$ is $0$-connected at infinity if and only if it has one-ended.
If a group $G$ is 1-connected at infinity, then it is also called \textit{simply-connected at infinity}.
If $G$ is $n$-connected at infinity for all $n$, then we say that it has \textit{trivial homotopy groups at infinity}.

\begin{lemma} [{\cite[Theorem 3.1]{mihalik1985end}}]
  Let $G$ be a one-ended finitely presented group and $\phi \colon G \to G$ a monomorphism.
  Then its HNN extension $G*_{\phi, t}$ is simply connected at infinity.
\end{lemma}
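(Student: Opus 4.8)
The plan is to realize $\Gamma := G *_{\phi, t}$ as a mapping torus and analyze the single end of its universal cover directly. Since $G$ is finitely presented (hence of type $\mathrm{F}_2$), I would first fix a $K(G,1)$ complex $X$ with finite $2$-skeleton and realize the monomorphism $\phi$ by a cellular, $\pi_1$-injective map $f \colon X \to X$ with $f_* = \phi$. The mapping torus $W = (X \times [0,1]) / ((x,1) \sim (f(x),0))$ is then a $K(\Gamma,1)$ with finite $2$-skeleton, and the projection $W \to S^1$ lifts to a $\Gamma$-equivariant height function $q \colon \widetilde{W} \to \mathbb{R}$ on which the stable letter $t$ acts by translation by $1$. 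Each integer slab $q^{-1}([m,m+1])$ is a copy of $\widetilde{X} \times [0,1]$, and consecutive slabs are glued by the lift $\widetilde{f} \colon \widetilde{X} \to \widetilde{X}$, which is a proper embedding because $\phi$ is injective. Since $\widetilde{X}$ is contractible and $G$ is one-ended, each slab is simply connected and one-ended; and, being an ascending HNN extension of the infinite one-ended group $G$, the group $\Gamma$ is itself one-ended, so $\widetilde{W}$ has a single end and it suffices to kill loops at that end.

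With this model, to verify $1$-connectedness at infinity I would, given a compact set $C \subset \widetilde{W}$ (contained in a band $q^{-1}([-N,N])$ and bounded horizontally), produce a larger compact $D \supset C$ and show that every loop $\ell \subset \widetilde{W} \setminus D$ bounds a disk in $\widetilde{W} \setminus C$. The central maneuver is to \emph{push $\ell$ up the height function} toward the unique ascending end of the Bass--Serre tree: using the gluing embeddings $\widetilde{f}$, a loop may be homotoped into a single high slab $\widetilde{X} \times [m,m+1]$ with $m > N$, then compressed in the $\mathbb{R}$-direction into one copy of $\widetilde{X}$, where it bounds a disk because $\widetilde{X}$ is simply connected. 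For $m > N$ that entire copy of $\widetilde{X}$ is disjoint from $C$, so the filling disk and the trace of the homotopy stay outside $C$.

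The main obstacle is the \emph{coherence and quantitative control} of this pushing-up. When $\phi$ is not surjective the descending direction of the tree branches (the index $[G : \phi(G)]$ is at least $2$), so the end of $\widetilde{W}$ is not a product: a general loop sits across several descending branches joined through the bulk, and these must be merged as one pushes upward along the embeddings $\widetilde{f}$. Making the merge by a homotopy that stays above height $N$, rather than dipping back through low slabs into $C$, is exactly where one-endedness and type $\mathrm{F}_2$ of the base are consumed, since they supply a uniform, height-preserving filling of the intermediate arcs inside the slabs. Equivalently, this is the step that establishes \emph{semistability at infinity} of $\Gamma$, which I expect to be the genuinely hard part; the triviality of the filling itself is then immediate from contractibility of the fibers $\widetilde{X}$.

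As an alternative organization, one could instead verify the two standard criteria for a one-ended finitely presented group to be simply connected at infinity, namely semistability at infinity together with triviality of the fundamental pro-group at the end. Both reduce, via the mapping-torus filtration $\{q^{-1}([-m,m])\}_m$, to the one-endedness and finite presentation of $G$, with the ascending structure guaranteeing that representatives of loops at infinity can be pushed coherently toward the single ascending end and there killed. This is the route taken in \cite{mihalik1985end}.
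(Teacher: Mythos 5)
The paper does not actually prove this lemma: it is quoted directly from Mihalik and used as a black box, so there is no internal argument to compare yours against. Your proposal is therefore best judged as a standalone reconstruction of Mihalik's proof, and as such it sets up the standard machinery correctly (the mapping torus $W$ of a cellular realization of $\phi$, the equivariant height function $q\colon \widetilde{W}\to\mathbb{R}$, filling loops in the contractible vertex spaces after pushing them toward the ascending end) and correctly locates where one-endedness and finite presentation of $G$ must be consumed.

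Two problems, one minor and one substantive. Minor: when $\phi$ is not surjective, $q^{-1}(\{m\})$ is not a single copy of $\widetilde{X}$ but an infinite disjoint union of copies indexed by the vertices of the Bass--Serre tree at height $m$, each such vertex space receiving $[G:\phi(G)]$ edge spaces from below via distinct lifts of $\widetilde{f}$; so the assertion that ``each integer slab $q^{-1}([m,m+1])$ is a copy of $\widetilde{X}\times[0,1]$'' is false, and your own later remarks about branching contradict it. Substantive: the entire content of the theorem is the step you defer --- showing that a loop outside a large compact $D$, possibly spread across several descending branches or sitting at low height far from the ascending direction, can be homotoped to high height with the \emph{track} of the homotopy avoiding $C$. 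You name this step, observe that it amounts to semistability together with triviality of the fundamental pro-group, and then state that this ``is the route taken in \cite{mihalik1985end}.'' That is a plan, not a proof: no construction of the controlled homotopy is given, and no argument shows that the merging of descending branches can be carried out above height $N$ rather than dipping back through $C$. So the proposal is a well-informed outline with the crucial lemma left unproven --- which, to be fair, leaves it on the same footing as the paper itself, which simply cites the result.
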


From this lemma, Theorem \ref{LM-property} (2), Proposition \ref{LM_Flike} and \ref{LM_nonFlike}, we have the following:

\begin{theorem}
  All of the $n$-adic Lodha--Moore groups are simply connected at infinity.
\end{theorem}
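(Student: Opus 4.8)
The plan is to invoke the cited lemma of Mihalik directly, once I have verified that its two hypotheses are met by each of the four $n$-adic Lodha--Moore groups. The lemma requires, for a group of the form $G\ast_{\phi,t}$, that the base group $G$ be one-ended and finitely presented, and that $\phi\colon G\to G$ be a monomorphism (equivalently, that we have an \emph{ascending} HNN decomposition). So the entire argument reduces to checking that each of $\G{n}$, $\yG{n}$, $\Gy{n}$, and $\yGy{n}$ is realized as an ascending HNN extension whose base is one-ended and finitely presented.

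First I would recall from Proposition \ref{LM_Flike} and Proposition \ref{LM_nonFlike} that every one of the four groups is exhibited as a strictly ascending HNN extension of one of the others: for instance $\G{n}\cong(\yG{n})\ast_{x_0}$, $\yG{n}\cong(\G{n})\ast_{y_0^{-1}}$, and so on through the eight decompositions. Since the decompositions are ascending, the associated conjugation maps are genuine monomorphisms of the base into itself, so the second hypothesis of the lemma holds automatically. Next I would establish finite presentability and one-endedness of each base group. For $\G{n}$ these are exactly Theorem \ref{LM-property} (1) and (2). For the other three groups, I would note that each is itself an ascending HNN extension of a finitely presented, one-ended group (via the same two propositions), and that finite presentability is inherited up such extensions; more efficiently, I would point to Theorem \ref{LM-cohomology} and the remarks in its proof, where all four groups are shown to be of type $\mathrm{F}_\infty$ (hence finitely presented) and one-ended (the footnote there records $H^1(G;\bZ[G])=0$, i.e.\ one-endedness, for all four).

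With both hypotheses verified, applying Mihalik's lemma to any single ascending decomposition of each group immediately yields that each of the four groups is simply connected at infinity, completing the proof. The one subtlety I would flag is making sure that the \emph{base} group in the chosen decomposition — not merely the ambient group — is the one whose one-endedness and finite presentability I cite; for example, to conclude simple connectivity at infinity for $\yG{n}$ via $\yG{n}\cong(\G{n})\ast_{y_0^{-1}}$, I need the hypotheses for the base $\G{n}$, which are precisely Theorem \ref{LM-property} (1) and (2). Using this "non-$F$-like" decomposition with base $\G{n}$ is the cleanest route, since it lets all four conclusions rest on the already-established properties of $\G{n}$ together with the type-$\mathrm{F}_\infty$ and one-endedness facts assembled in Theorem \ref{LM-cohomology}.

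I do not expect a genuine obstacle here: the lemma does the real work, and the HNN decompositions have already been constructed in Section \ref{section_HNN}. The only bookkeeping is to pair each target group with a decomposition whose base satisfies the hypotheses, which the results above supply.
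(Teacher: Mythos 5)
Your proposal is correct and follows exactly the paper's route: apply Mihalik's lemma to the ascending HNN decompositions of Propositions \ref{LM_Flike} and \ref{LM_nonFlike}, with Theorem \ref{LM-property} supplying finite presentability and one-endedness of $\G{n}$ and the type-$\mathrm{F}_\infty$/one-endedness facts covering the other bases. You are in fact somewhat more careful than the paper's one-line citation in insisting that the hypotheses be checked for the \emph{base} of each chosen decomposition (e.g.\ $\yG{n}$ or $\Gy{n}$ when the target is $\G{n}$ or $\yGy{n}$), which is the right thing to flag.
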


In order to prove the triviality of homotopy groups at infinity, we use the following facts:

\begin{lemma} [{\cite[Theorem 17.2.1]{geoghegan2008group}}]
  If a group $G$ is of type ${\rm F}_n$ and simply-connected at infinity, then $G$ is $(n-1)$-connected at infinity if and only if $G$ is $(n-1)$-acyclic at infinity with respect to $\bZ$.
\end{lemma}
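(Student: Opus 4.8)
The plan is to prove this as a Hurewicz-type theorem ``at infinity,'' transported into the pro-category of neighborhoods of the end. Since $G$ has type $\mathrm{F}_n$, I would realize it by a $K(G,1)$ with finite $n$-skeleton, pass to the universal cover $\widetilde{X}$, and exhaust $\widetilde{X}$ by compact sets $D_0 \subset D_1 \subset \cdots$; the complements $\widetilde{X}\setminus D_m$ together with the inclusion-induced bonding maps form an inverse system whose pro-homotopy groups pro-$\pi_k$ and pro-homology groups pro-$H_k$ (with $\bZ$ coefficients) record the behavior of the end. The finiteness of the $n$-skeleton is exactly what makes this tower pro-homotopy equivalent, through dimension $n$, to a tower of finite complexes, so that the pro-invariants are well defined for $k\le n-1$. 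In this language, being $(n-1)$-connected at infinity is the pro-triviality of pro-$\pi_k$ for $k\le n-1$, being $(n-1)$-acyclic at infinity is the pro-triviality of pro-$H_k$ for $k\le n-1$, and the standing hypothesis of simple connectivity at infinity is precisely pro-$\pi_0=\ast$ (one-endedness) together with pro-$\pi_1=0$.

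The forward implication is the routine direction. If every sphere $S^k \to \widetilde{X}\setminus D$ bounds a ball in $\widetilde{X}\setminus C$ for $k\le n-1$, then by naturality of the Hurewicz homomorphism every $k$-cycle of $\widetilde{X}\setminus D$ becomes a boundary in $\widetilde{X}\setminus C$ in the same range; hence pro-triviality of pro-$\pi_k$ forces pro-triviality of pro-$H_k$, i.e.\ $(n-1)$-connected at infinity implies $(n-1)$-acyclic at infinity.

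For the converse I would run the pro-analogue of the Hurewicz theorem by induction on $k$. The base cases $k=0,1$ are handed to us directly by simple connectivity at infinity. Assuming pro-$\pi_j=0$ for all $j<k$ (with $2\le k\le n-1$) and pro-$H_j=0$ for all $j\le n-1$, the pro-Hurewicz theorem furnishes a pro-isomorphism between the lowest possibly-nonvanishing pro-homotopy group and the corresponding pro-homology group, so pro-$H_k=0$ forces pro-$\pi_k=0$; iterating up to $k=n-1$ yields $(n-1)$-connectivity at infinity.

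The hard part is this backward direction, and within it the passage from the classical Hurewicz theorem to its pro-categorical version. One must verify that the levelwise Hurewicz maps assemble into a morphism of inverse systems that is a pro-isomorphism in the lowest nonvanishing degree, which genuinely needs the pro-$\pi_1=0$ hypothesis to suppress the fundamental-group action that would otherwise obstruct a naive levelwise application of Hurewicz, together with a Mittag--Leffler/movability control on the tower supplied by the type-$\mathrm{F}_n$ finiteness. Organizing this pro-categorical bookkeeping, rather than any new homotopy-theoretic input, is where essentially all of the work lies.
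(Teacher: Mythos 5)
The paper does not actually prove this lemma: it is quoted verbatim from Geoghegan's book (Theorem 17.2.1) and used as a black box, so the only meaningful comparison is with the argument in that source. Your overall strategy --- encode the end of $\widetilde{X}$ as a tower of complements of compacta, interpret both hypotheses as pro-triviality statements, and deduce the equivalence from a Hurewicz theorem in the pro-category, with simple connectivity at infinity supplying the vanishing of pro-$\pi_1$ needed to run pro-Hurewicz --- is exactly the strategy of the cited proof, and your identification of where the real work lies (the pro-categorical Hurewicz theorem itself, basepoint/base-ray issues, and the Mittag--Leffler control coming from the finite $n$-skeleton) is accurate.

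There is, however, one step whose justification as written is wrong: the forward implication. You argue that if every sphere $S^{k}\to\widetilde{X}\setminus D$ bounds in $\widetilde{X}\setminus C$, then ``by naturality of the Hurewicz homomorphism every $k$-cycle of $\widetilde{X}\setminus D$ becomes a boundary in $\widetilde{X}\setminus C$.'' This presumes that $k$-dimensional homology classes are spherical, which is false: the collapse map $T^{2}\to S^{2}$ is zero on $\pi_{1}$ and on $\pi_{2}$ of its source yet is an isomorphism on $H_{2}$, so a map killing homotopy groups need not kill homology, and naturality of Hurewicz alone does not let you pass from ``spheres bound'' to ``cycles bound.'' The correct route for this direction is the same inductive use of the pro-Hurewicz isomorphism you invoke for the converse: once pro-$\pi_{j}$ is pro-trivial for all $j<k$, the Hurewicz map pro-$\pi_{k}\to\text{pro-}H_{k}$ (with $\bZ$ coefficients) is a pro-isomorphism, so pro-triviality of pro-$\pi_{k}$ for all $k\le n-1$ forces pro-triviality of pro-$H_{k}$ in the same range. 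With that repair both implications fall out of the single pro-Hurewicz statement, which is how the cited source organizes the proof.
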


This lemma can be called the ``Hurewicz theorem at infinity''.
We do not explain the definition of the $n$-acyclicity at infinity, since we only need its sufficient condition; if $H^k(G; \bZ[G]) = 0$ for all $k \leq n$, then $G$ is $(n-1)$-acyclic at infinity with respect to $\bZ$ \cite[Theorem 13.3.3 (ii)]{geoghegan2008group}\footnote{More precisely, a group $G$ is $(n-1)$-acyclic at infinity with respect to a commutative ring $R$ if and only if $H^k(G; R[G]) = 0$ for all $k \leq n$ and $H^{n+1}(G; R[G])$ is a torsion-free $R$-module.}.
From Theorem \ref{LM-cohomology}, we see that all of the $n$-adic Lodha--Moore groups are $n$-acyclic at infinity for all $n \geq 0$.
Therefore we obtain the following theorem:

\begin{theorem}
  All of the $n$-adic Lodha--Moore groups have trivial homotopy groups at infinity.
\end{theorem}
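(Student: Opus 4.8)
The plan is to obtain triviality of the homotopy groups at infinity by assembling, uniformly in the degree, the ingredients already established above. Recall that a group has \emph{trivial homotopy groups at infinity} precisely when it is $k$-connected at infinity for every $k \geq 0$; accordingly, it suffices to prove that each $G \in \LM$ is $(n-1)$-connected at infinity for every $n \geq 1$, and then let $n$ grow without bound.

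Fix $G \in \LM$ and an integer $n \geq 1$. First I would verify the two standing hypotheses of the Hurewicz theorem at infinity \cite[Theorem 17.2.1]{geoghegan2008group}. For type $\mathrm{F}_n$: the group $\G{n}$ is of type $\mathrm{F}_\infty$ by Theorem \ref{appendix_main}, while Propositions \ref{LM_Flike} and \ref{LM_nonFlike} realize each of $\yG{n}$, $\Gy{n}$, $\yGy{n}$ as an ascending HNN extension of a group already known to be $\mathrm{F}_\infty$, so all four are $\mathrm{F}_\infty$ and in particular of type $\mathrm{F}_n$. For simple connectivity at infinity: this is exactly the content of the preceding theorem. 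Hence the Hurewicz theorem at infinity applies and reduces the desired $(n-1)$-connectedness at infinity to $(n-1)$-acyclicity at infinity with respect to $\bZ$.

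It remains to supply this acyclicity. By Theorem \ref{LM-cohomology} we have $H^{k}(G; \bZ[G]) = 0$ for all $k$, hence certainly for all $k \leq n$, and the stated sufficient condition \cite[Theorem 13.3.3 (ii)]{geoghegan2008group} then yields that $G$ is $(n-1)$-acyclic at infinity with respect to $\bZ$. Feeding this into the Hurewicz equivalence shows that $G$ is $(n-1)$-connected at infinity, and since $n$ was arbitrary this holds in every degree, which is the assertion. The genuine work lives in Theorem \ref{LM-cohomology} (resting in turn on the HNN decompositions of Section \ref{section_HNN} and on Lemma \ref{HNN-cohomology}) together with the simple-connectivity-at-infinity result of the previous theorem; with those in hand I expect no further obstacle, the only point requiring a little care being that all three hypotheses are degree-independent, so a single application scheme works simultaneously for every $n$.
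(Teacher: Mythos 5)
Your proposal is correct and follows essentially the same route as the paper: both arguments feed the simple connectivity at infinity (from the Mihalik lemma applied to the HNN decompositions), the type $\mathrm{F}_\infty$ property, and the vanishing of $H^{k}(G;\bZ[G])$ from Theorem \ref{LM-cohomology} into the Hurewicz theorem at infinity together with the acyclicity criterion of \cite[Theorem 13.3.3 (ii)]{geoghegan2008group}. No substantive difference from the paper's proof.
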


Recall that for any $n \geq 2$, the $n$-adic Lodha--Moore group admits no nontrivial direct and free product decompositions \cite{kodama2023n}.
Therefore, we get ``non-trivial'' infinitely many groups that satisfy all of Geoghegan's conditions.


\section*{Acknowledgements}
The first and second authors are partially supported by JSPS KAKENHI Grant number 24K22836 and 24KJ0144, respectively.
We would like to thank Professor Daniel Farley for the comments on the finiteness property of the $n$-adic Lodha--Moore group. 
\appendix
\section{Finiteness property of the group $\G{n}$} \label{appendix_Finf}
Fix $n \geq 2$.
In this appendix, we give a sketch of the proof of the following:
\begin{theorem} \label{appendix_main}
  The group $\G{n}$ is of type ${\rm F}_\infty$.
\end{theorem}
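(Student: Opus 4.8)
The plan is to generalize Lodha's proof that $G_0 = \G{2}$ is of type ${\rm F}_\infty$ \cite{lodha2020nonamenable} to arbitrary $n$, following the Brown--Stein--Farley strategy: construct a contractible complex on which $\G{n}$ acts, equip it with a $\G{n}$-invariant cocompact filtration, and verify that the relevant descending links become arbitrarily highly connected, so that Brown's finiteness criterion \cite{brown1987finiteness} applies and yields type ${\rm F}_m$ for every $m$, hence type ${\rm F}_\infty$.

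First I would build the geometric model. Using the infinite presentation $\langle X(n) \cup Y_0(n) \mid R_0(n) \rangle$, I would form the poset of \emph{$n$-adic expansions} of the Cantor set $\NN$: an expansion records a finite sequence of $n$-ary caret subdivisions (encoded by the local maps $\x{i}{\alpha}$) together with the admissible $y$-moves (encoded by the $y_\alpha$ and the recursion of relation (4)). Realizing $\G{n}$ as a group of rearrangements of these expansions produces a complex $X$ on which $\G{n}$ acts with finite cell stabilizers and finitely many cell orbits once the number of carets is bounded. Contractibility of $X$ then reduces, as in the Thompson-group case, to showing the expansion poset is directed---any two expansions admit a common refinement, since $n$-ary subdivisions and $y$-moves can always be pushed to a finer common expansion---followed by the standard Stein--Farley Morse collapse showing that the realization of such a poset is contractible.

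Next I would define the Morse function $f \colon X \to \bZ_{\geq 0}$ measuring the number of carets (equivalently, the number of leaves of the underlying $n$-ary forest, suitably normalized). This function is $\G{n}$-invariant, takes values in a discrete set, and its sublevel sets are $\G{n}$-cocompact, which is exactly the input Brown's criterion requires. What remains is to control the descending links of the filtration so that the sublevel sets become increasingly highly connected.

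The hard part will be the connectivity of the descending links. For a vertex of Morse value $k$, the descending link is assembled from the elementary un-expansions one may perform---both the un-subdivisions of $n$-ary carets and the reversals of $y$-moves---and one must show it is $(\nu(k)-1)$-connected with $\nu(k) \to \infty$ as $k \to \infty$. The genuinely new features for general $n$ are twofold: the $n$-ary branching enlarges the link and alters its combinatorics, and the recursion of relation (4) couples a single $y$-move to three descendant $y$-moves together with an $x$-move, so that the $x$-type and $y$-type un-expansions interact. I would analyze the link by a secondary discrete Morse argument that first collapses the $y$-contributions onto the purely $n$-adic subcomplex---whose high connectivity is the Stein--Farley computation underlying the type ${\rm F}_\infty$ property of $F(n)$ recorded in Theorem \ref{BT-property}(1)---and then controls the remaining $y$-strata uniformly in $n$. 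Establishing that the connectivity bound $\nu(k)$ genuinely grows, rather than merely staying positive, for every $n$ is the crux of the argument, and is precisely where Lodha's $n=2$ estimates must be generalized with care. Once this is in hand, Brown's criterion \cite{brown1987finiteness} finishes the proof.
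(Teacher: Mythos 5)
Your strategy is the Stein--Farley/Bestvina--Brady one: a directed poset of expansions, a caret-counting Morse function, cocompact sublevel sets, and descending links whose connectivity tends to infinity. This is not the route the paper (following Lodha) takes, and the difference matters, because the two load-bearing claims you make at the outset are exactly the ones that are problematic for the Lodha--Moore groups. First, the ``poset of $n$-adic expansions together with admissible $y$-moves'' is not obviously directed or even well-founded: the defining relation $y_\alpha=\x{0}{\alpha}\, y_{\alpha0}\, y_{\alpha(n-1)0}^{-1}\, y_{\alpha(n-1)(n-1)}$ replaces one $y$-generator by three deeper ones plus an $x$-move, so ``expanding'' in the $y$-direction never terminates and a caret count does not bound the $y$-data; your assertion that any two expansions admit a common refinement is precisely the hard point and is asserted rather than argued. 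Second, the claim that the resulting complex has finite cell stabilizers is unsupported and is not what happens in any known model: in Lodha's complex, and in the paper's generalization of it, the vertices are the right cosets of $F(n)$ in $\G{n}$, so cell stabilizers are intersections of conjugates of $F(n)$ --- infinite groups that must themselves be shown to be of type ${\rm F}_\infty$, which is one of the three conditions the paper has to verify and a nontrivial part of the argument. Third, the descending-link connectivity, which you yourself identify as the crux, is left entirely as a promissory note.

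For comparison, the paper generalizes Lodha's cluster-complex argument: the $0$-skeleton is the coset space of $F(n)$ in $\G{n}$, edges are given by four specified double cosets involving $y_{(n-1)0}$ and $y_{(n-1)00}^{-1}y_{(n-1)0(n-1)}$, higher cells are ``clusters'' modeled on face complexes of admissible hyperplane arrangements restricted to $[0,1]^m$, contractibility is proved by sandwiching nonpositively curved cube complexes between finite subcomplexes and $X$ (not by poset directedness), cocompactness in each dimension is an orbit count of clusters, and stabilizers are handled by decomposing their supports into intervals, yielding products of copies of $F(n)$ and related type ${\rm F}_\infty$ groups. None of the three inputs to Brown's criterion is obtained the way you propose. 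To make your approach work you would need to exhibit a genuinely new expansion poset for $\G{n}$ with the properties you claim (directedness despite the non-terminating $y$-recursion, finite stabilizers, and computable descending links); as written, the proposal does not constitute a proof.
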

By \cite[Proposition 1.1]{brown1987finiteness}, it is sufficient to show the following:
\begin{theorem} \label{thm_G_0(n)_F_infty}
  There is a connected CW complex $X$ such that $\G{n}$ acts by cell permuting homeomorphisms such that the following hold:
  \begin{enumerate}
    \item $X$ is contractible;
    \item The quotient $X/G$ has finitely many cells in each dimension; and
    \item The stabilizer of each cell has type {$\rm F_\infty$}.
  \end{enumerate}
\end{theorem}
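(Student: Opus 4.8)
The plan is to construct the contractible complex $X$ as a variant of the Stein--Brown complex that was used for the Brown--Thompson group $F(n)$, enlarged to carry the extra $y$-type generators. The natural model is a poset of ``expansions'' on which $\G{n}$ acts: vertices correspond to finite subdivisions (dyadic-type partitions adapted to the $n$-ary Cantor set), and the partial order records elementary expansions, where now an elementary expansion may be either one of the usual $F(n)$-type subdivisions or a $y$-type move reflecting relation (4), $y_\alpha = \x{0}{\alpha} y_{\alpha 0} y_{\alpha(n-1)0}^{-1} y_{\alpha(n-1)(n-1)}$. Taking the geometric realization of the associated poset (or, more precisely, the Stein--Brown subcomplex spanned by intervals between a partition and its simultaneous elementary expansions) yields the candidate $X$, and $\G{n}$ acts on it by cell-permuting homeomorphisms by construction.

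First I would verify condition (2): since there are only finitely many $F(n)$-type and $y$-type elementary expansions of a given partition up to the $\G{n}$-action, the quotient $X/\G{n}$ has finitely many orbits of cells in each dimension, exactly as in Brown's original computation for $F(n)$. Next I would identify the cell stabilizers needed for condition (3). The stabilizer of a cell is (up to finite index) a product of groups of the same type as $\G{n}$ acting on the pieces of the subdivision, together with finite symmetric factors; by an inductive/self-similar argument these stabilizers are again (products of) $n$-adic Lodha--Moore groups or Brown--Thompson groups. Since $F(n)$ has type $\rm F_\infty$ by Theorem \ref{BT-property}, and the $y$-generators only add finitely many commuting factors, the stabilizers inherit type $\rm F_\infty$, closing the induction.

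The hard part will be condition (1), the contractibility of $X$. The standard strategy is Morse theory on the poset: one defines a height function counting the number of leaves (the ``size'' of a partition), shows that $X$ is the increasing union of the sublevel complexes, and proves that passing from one sublevel to the next is a homotopy equivalence by verifying that the relevant descending links are highly connected. Concretely, I would show that the descending link of each vertex is homotopy equivalent to a join of copies of the ``expansion complex'' built from the $n$-ary subdivision move and the $y$-move; the combinatorics of these moves must be shown to produce links whose connectivity grows without bound as the number of leaves grows. This connectivity-of-descending-links estimate is the genuine obstacle, because one must carefully account for the non-$F$-like $y$-expansions, which do not appear in Brown's setting and whose interaction with the $F(n)$-expansions governs whether the links are sufficiently connected; this is precisely where the generalization of Lodha's argument in \cite{lodha2020nonamenable} enters.

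Once the descending links are shown to be $k$-connected for partitions with enough leaves (with $k \to \infty$), Morse theory gives that $X$ is $\infty$-connected, hence contractible, establishing (1). Combining (1), (2), and (3) with \cite[Proposition 1.1]{brown1987finiteness} then yields Theorem \ref{thm_G_0(n)_F_infty}, and therefore Theorem \ref{appendix_main}.
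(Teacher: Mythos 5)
Your proposal is a plan rather than a proof, and the step you defer --- the connectivity of descending links in a Stein--Brown poset of expansions --- is exactly the step that is not known to work for these groups, which is why the paper does not take this route. The difficulty is structural: the $y$-type ``expansion'' coming from relation (4), $y_\alpha = \x{0}{\alpha}\, y_{\alpha 0}\, y_{\alpha(n-1)0}^{-1}\, y_{\alpha(n-1)(n-1)}$, involves an inverse letter, so it is not an expansion in the poset-theoretic sense: iterating it does not produce a directed system of subdivisions admitting common refinements, and the lattice/directedness properties underlying the Stein--Brown complex (and hence the Morse-theoretic sublevel filtration) are not available. Your appeal to ``the generalization of Lodha's argument'' to supply the link-connectivity estimate misreads that argument: \cite{lodha2020nonamenable} does not prove type ${\rm F}_\infty$ by a descending-link computation at all.

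What the paper actually does (following \cite{lodha2020nonamenable}) is build $X$ with $0$-skeleton the set of right cosets $F(n)\tau$ in $\G{n}$, edges given by a short explicit list of double cosets of $y$-type elements, and higher cells given by ``clusters'' --- subdivided cubes arising from admissible hyperplane arrangements indexed by sorted lists of special words. Contractibility is then proved without Morse theory: $\pi_1(X)=0$ follows from uniqueness of normal forms in $\G{n}$, with the rewriting to normal form realized by homotopies in $X^{(1)}$, and $\pi_m(X)=0$ for $m\geq 2$ follows by factoring any map from a finite subcomplex of $X$ through a nonpositively curved (hence aspherical) cube complex. Cocompactness comes from an orbit analysis of $m$-clusters, and the cell stabilizers are reduced to intersections of $F(n)$-stabilizers of $0$-cells, which split via supports as products of the form $F(n)\times F_{3,m}\times F(n)$; your assertion that the stabilizers are, up to finite index, products of $n$-adic Lodha--Moore or Brown--Thompson groups is left unproved and is not what the identification actually gives. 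As it stands your outline establishes none of (1), (2), (3).
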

The full proof is obtained by natural generalizations of \cite{lodha2020nonamenable}.
All undefined notions of $\G{n}$ are defined in \cite{kodama2023n}.
\subsection{The cells of the complex $X$}
\begin{definition}
  The $0$-skeleton $X^{(0)}$ is defined to be the set of right cosets of $F(n)$ in $\G{n}$.
  Two cosets $F(n)\tau_1$ and $F(n)\tau_2$ in $X^{(0)}$ are connected by an edge in the $1$-skeleton $X^{(1)}$ if the double coset $F(n)\tau_1\tau_2^{-1} F(n)$ is equal to one of the following double cosets:
  \begin{align*}
     & F(n)y_{(n-1)0}F(n)                      &  & F(n)y_{(n-1)0}^{-1}F(n)                 \\
     & F(n)y_{(n-1)00}^{-1}y_{(n-1)0(n-1)}F(n) &  & F(n)y_{(n-1)00}y_{(n-1)0(n-1)}^{-1}F(n)
  \end{align*}
\end{definition}
We always consider the natural action of $\G{n}$ on $X^{(0)}$.
To define the higher dimensional skeletons, we first recall some notions.
\begin{definition}
  A word $\y{s_1}{t_1}\cdots \y{s_m}{t_m}$ on $Y_0(n)$ is \textit{special} if there exists an $n$-ary tree $T$ such that $s_1, \dots, s_m$ are all leaves of $T$, each pair of leaves $s_i, s_{i+1}$ have $n-2$ leaves between them, and $t_1, \dots, t_m \in \{1, -1\}$ with $t_i=-t_{i+1}$.
\end{definition}
Note that for $\tau_1, \tau_2 \in \G{n}$, the following are equivalent:
\begin{itemize}
  \item $F(n)\tau_1$ and $F(n)\tau_2$ are connected in $X^{(1)}$.
  \item there exists a standard form $f\y{s_1}{t_1}\cdots \y{s_m}{t_m}$ of $\tau_1 \tau_2^{-1}$ such that $\y{s_1}{t_1}\cdots \y{s_m}{t_m}$ is special.
\end{itemize}

Two special words $\y{s_1}{t_1}\cdots\y{s_m}{t_m}$ and $\y{u_1}{v_1} \cdots \y{u_l}{v_l}$ on $Y_0(n)$ are \textit{independent} if for any pair $\y{s_j}{t_j}$ and $\y{u_k}{v_k}$, we have $\y{s_j}{t_j}\y{u_k}{v_k}=\y{u_k}{v_k}\y{s_j}{t_j}$.
A list of special words is \textit{independent} if special forms are pairwise independent.
A list of special words $\tau_1, \dots, \tau_m$ is \textit{sorted} if it is independent and if $i<j$, then any elements $\y{s}{t}$ in $\tau_i$ and $\y{u}{v}$ in $\tau_j$ satisfy $s<u$, where the order $<$ is the usual lexicographical order.

For a sorted list $\tau_1, \dots, \tau_m$ and $X=\{j_1, \dots, j_l\} \subseteq \{1, \dots, m\}$ with $j_1<\cdots <j_l$, we define $\tau_X \coloneqq \tau_{j_1} \cdots \tau_{j_l}$.
To define higher dimensional cells, we first define $\cH$ as the set of finite sets of edges in $X^{(1)}$.
\begin{definition}
  An element $C$ of $\cH$ is a subgraph of $X^{(1)}$ which is determined by the following:
  \begin{enumerate}
    \item A sorted list of special words $\tau_1, \dots, \tau_m$.
    \item An element $\tau \in \G{n}$.
  \end{enumerate}
  Then $C$ is the induced subgraph consisting of the vertex set
  \begin{align*}
    \{F(n)\tau_X \tau \mid X \subseteq \{1, \dots, m\}\}
  \end{align*}
  in $X^{(1)}$.
\end{definition}
Note that any element in $\cH$ are not represented by a unique sorted list of special words and an element in $\G{n}$.
Especially, elements in $\G{n}$ required in (2) can be taken as words on $Y_0(n)$.
Observe that any $1$-cell $(F(n)\tau_1, F(n)\tau_2)$ is in $\cH$.
\subsection{The complex $X$}
Let $m \geq 1$.
We first recall the notion of hyperplane arrangement in $\bR^m$ and then define the cluster complex.
See \cite[Section 3.4 and 5]{lodha2020nonamenable} for details.

Let $\bR^m$ be the Euclidean space with the usual orthogonal basis.
Let $x_1, \dots, x_m$ be coordinates with this basis.
For $a_1, \dots, a_{m+1} \in \bR$, an \textit{affine hyperplane} is an $(m-1)$-dimensional subspace
\begin{align*}
  \{(x_1, \dots, x_m) \mid a_1x_1+\cdots+a_mx_m=a_{m+1}\}.
\end{align*}
For simplicity, we write this set as $\{a_1x_1+\cdots+a_mx_m=a_{m+1}\}$.

A finite \textit{hyperplane arrangement} $\cA$ is a finite set of affine hyperplanes in $\bR^m$.
We write $\cR(\cA)$ for the set of connected components of $\bR^m \setminus \bigcup_{H \in \cA}H$.
For affine hyperplanes in $\cA$, we call the subspace obtained from taking intersection \textit{flat}.
The trivial intersection, i.e., $\bR^m$, is also called frat.
For a flat $T$, we define the hyperplane arrangement $\cA \restriction T$ as
\begin{align*}
  \cA \restriction T \coloneqq \{T \cap H \mid H \in \cA, T \not \subseteq H\}.
\end{align*}
Let $\cR(\cA) \restriction T$ be the set of connected components of $T \setminus \bigcup_{H \in A \restriction T} H$.
Then
\begin{align*}
  \bigcup_{\text{$T$ is a flat of $\cA$}} \cR(\cA) \restriction T
\end{align*}
is called the \textit{face complex of the arrangement $\cA$}.
The face complex provides a cellular structure on $\bR^m \cup \partial \bR^m=\mathbb{B}^m$.
\begin{example} \label{example_face_complex}
  Let $\cA=\{ \{x_1=0\}, \{x_2=0\}, \{x_1=x_2\}\}$ be a hyperplane arrangement of $\bR^2$.
  Then the set of flats of $\cA$ is $\{\bR^2, \{x_1=0\}, \{x_2=0\}, \{x_1=x_2\}, \{(0, 0)\}\}$.
  Then the face complex of $\cA$ is given as in Figure \ref{Fig_flat_complex}.
  \begin{figure}[tbp]
    \begin{center}
      \includegraphics[width=0.3\linewidth]{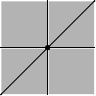}
    \end{center}
    \caption{An example of a flat complex (before the one-point compactification). }
    \label{Fig_flat_complex}
  \end{figure}
\end{example}
To define our complex, we consider two types of affine hyperplanes in $\bR^m$:
\begin{enumerate}
  \item $\{x_i=0\}$ or $\{x_i=1\}$ for some $1 \leq i \leq m$.
  \item $\{x_i=x_{i+1}\}$ for some $1 \leq i \leq m-1$.
\end{enumerate}
The first hyperplanes are called type 1, and the second type 2.
Then a hyperplane arrangement $\cA$ is \textit{admissible} if $\cA$ consists of hyperplanes of type 1 and 2, and contains all hyperplanes of type 1.
\begin{definition}
  An $m$-cluster is a CW subdivision of the $m$-dimensional cube $[0, 1]^m$, which is a restriction of the face complex of an admissible hyperplane arrangement of $\bR^m$ to $[0, 1]^m$.
  For an admissible hyperplane arrangement $\cA$ of $\bR^m$, we write $\cC(\cA)$ as the restriction of the face complex of $\cA$ to $[0, 1]^m$, which is a subcomplex.
  The subcomplex $\cC(\cA)$ is called the $m$-cluster (or simply cluster) associated with $\cA$.
\end{definition}
\begin{example}
  For the admissible hyperplane $\cA$ defined in Example \ref{example_face_complex}, the subcomplex $\cC(\cA)$ is illustrated in Figure \ref{Fig_m-cluster}.
  \begin{figure}[tbp]
    \begin{center}
      \includegraphics[width=0.3\linewidth]{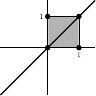}
    \end{center}
    \caption{An example of $\cC(\cA)$. }
    \label{Fig_m-cluster}
  \end{figure}
\end{example}
For a cluster $\cC(\cA)$, a \textit{subcluster} is a subcomplex obtained by an intersection of $\cC(\cA)$ with a frat in $\cA$.
Then we define a \textit{complex of clusters} as a CW complex obtained by gluing clusters along subclusters using cellular homeomorphisms.

\begin{definition}
  Two special words $\tau_1=\y{s_1}{t_1}\cdots \y{s_m}{t_m}$ and $\tau_2=\y{u_1}{v_1} \cdots \y{u_l}{v_l}$ are \textit{alternating} if $\tau_1 \tau_2$ is a special word.
  If $\tau_1, \tau_2$ or $\tau_1, \tau_2^{-1}$ are alternating, $\tau_1, \tau_2$ are called \textit{consecutive}.
  Let $\tau_1, \dots, \tau_m$ be a sorted list of special words.
  We say that this list is \textit{proper} if for any $i \in \{1, \dots, m-1\}$, whenever $\tau_i, \tau_{i+1}$ is consecutive, then it is also alternating.
\end{definition}
\begin{lemma}[{cf.~\cite[Lemma 7.20]{lodha2020nonamenable}}]
  Any element in $\cH$ is represented by a proper sorted list of special words and an element in $\G{n}$.
\end{lemma}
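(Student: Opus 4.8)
The plan is to obtain properness by a normalization that only changes the \emph{signs} of the special words in a given representing list, at the cost of shifting the base point. The key is a ``gauge freedom'' in the representations of an element of $\cH$, after which the properness condition collapses to a trivial consistency argument along a path.

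First I would record this gauge freedom. Fix a representing sorted list $\tau_1,\dots,\tau_m$ together with a base point $\tau \in \G{n}$, which exists by the very definition of $\cH$. Since the list is sorted, hence independent, the special words $\tau_j$ pairwise commute, so $\tau_X=\prod_{j\in X}\tau_j$ is unambiguous and the vertex set of the subgraph is $\{F(n)\tau_X\tau : X\subseteq\{1,\dots,m\}\}$; as $C$ is the induced subgraph in $X^{(1)}$, two such data determine the same element of $\cH$ exactly when their vertex sets agree. I claim that replacing a single $\tau_i$ by $\tau_i^{-1}$ while simultaneously replacing $\tau$ by $\tau_i\tau$ leaves the subgraph unchanged: a short computation using commutativity of the $\tau_j$ shows that the new vertex indexed by $X$ equals the old vertex indexed by the symmetric difference $X\mathbin{\triangle}\{i\}$, and since $X\mapsto X\mathbin{\triangle}\{i\}$ is a bijection on subsets, the vertex sets coincide. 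Inverting a special word negates its alternating sign pattern but (as its letters have disjoint leaf supports and hence commute) leaves its leaf data intact, so the flipped list is again a sorted list of special words. Iterating, for any $S\subseteq\{1,\dots,m\}$ I may flip all $\tau_j$ with $j\in S$ and move the base point to $(\prod_{j\in S}\tau_j)\tau$ without changing the element of $\cH$.

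Second I would reduce properness to a sign problem along the chain $1<2<\dots<m$. The relation ``$\tau_i,\tau_{i+1}$ is consecutive'' is symmetric under inverting either word, hence unaffected by flips; only the finer property ``alternating'' can change. For a consecutive pair the underlying tree and leaf conditions automatically hold, so alternation is governed solely by the relative sign at the junction of $\tau_i$ and $\tau_{i+1}$, and flipping either word toggles this junction sign. Writing $\epsilon_j\in\bZ/2$ for the decision to flip $\tau_j$, the demand that every consecutive adjacent pair become alternating is the system $\epsilon_i+\epsilon_{i+1}=a_i$ over the consecutive indices $i$, where $a_i\in\bZ/2$ records whether the original pair fails to alternate.

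Finally I would solve this system greedily: the constraints are supported on the edges of a path, which is acyclic, so there is no consistency obstruction. I set $\epsilon_1=0$, propagate $\epsilon_{i+1}=\epsilon_i+a_i$ across each consecutive junction, and restart with value $0$ at every non-consecutive gap; applying the flips for $S=\{j:\epsilon_j=1\}$ and adjusting the base point as above yields a proper sorted list representing the original element of $\cH$. The main obstacle is not any single step but the bookkeeping in the first paragraph: verifying that a sign flip together with the matching base-point shift genuinely preserves the full induced subgraph as well as the sorted (independent, order-respecting, special) structure. Once this gauge invariance is established, properness reduces to the acyclic path-consistency argument, which is automatic.
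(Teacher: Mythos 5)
Your argument is correct: the gauge move $\tau_i\mapsto\tau_i^{-1}$, $\tau\mapsto\tau_i\tau$ does fix the vertex set via $X\mapsto X\mathbin{\triangle}\{i\}$ (using pairwise commutativity from independence), consecutiveness is indeed flip-invariant while alternation is controlled only by the junction sign, and the resulting linear system over $\bZ/2$ lives on a subforest of a path, so it is always solvable. This is essentially the same mechanism as the proof the paper defers to (Lodha's Lemma 7.20), which the paper itself does not reproduce, so your write-up supplies exactly the omitted detail.
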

Now we identify 1-skeletons of clusters with elements in $\cH$.
\begin{proposition}[{cf.~\cite[Proposition 7.23]{lodha2020nonamenable}}]
Let $C \in \cH$.
Take a proper sorted list of special words $\tau_1, \dots, \tau_m$  and an element in $\G{n}$ representing $C$.
Then $C$ is isomorphic to the $1$-skeleton of an $m$-cluster.
\begin{sproof}
For any $\tau \in \G{n}$, we have that $C \cdot \tau$ is also in $\cH$ (cf.~\cite[Lemma 7.7]{lodha2020nonamenable}).
Thus by acting on $C$ with an appropriate element, it is enough to consider the case that the element in $\G{n}$ representing $C$ is the identity element.
We define
\begin{align*}
  \textbf{Y}\coloneq\{i \in \{1, \dots, m-1\} \mid \text{$\tau_i \tau_{i+1}$ is a special form}\}.
\end{align*}
and the hyperplane arrangement $\cA$ in $\mathbb{R}^m$ consisting of hyperplanes $\{ x_i=0 \}, \{ x_i=1 \}$ for $1 \leq i \leq m$ and $\{ x_i=x_{i+1} \}$ for $i \in \textbf{Y}$.
Then $\cA$ is admissible, and thus we have the $m$-cluster $\cC(\cA)$.

We construct a graph isomorphism between $\cC(\cA)^{(1)}$ and $C$ via identifying $\cC(\cA)^{(1)}$ with the graph defined as follows:
a vertex is a subset of $\{1, \dots, m\}$.
An edge is a partition of $\{1, \dots, m\}$ into three sets $X_1, X_2, X_3$ such that $X_2$ is of the form $\{j, j+1, \dots, k\} \subset \{1, \dots, m\}$ with $i \in \textbf{Y}$ when $j \leq i \leq k-1$.
This edge connects the vertices $X_3$ and $X_2\cup X_3$.
We identify the vertex $X \subseteq \{1, \dots, m\}$ with the $0$-cell of $\cC(\cA)$ of coordinates $(y_1, \dots, y_m)$ where $y_i=0$ if $i \not\in X$ and $y_i=1$ if $i \in X$, and the edge $(X_1, X_2, X_3)$ with the $1$-cell of $\cC(\cA)$ connecting the $0$-cells $(y_1, \dots, y_m)$ and $(z_1, \dots, z_m)$ given by the following coordinates:
\begin{align*}
   & (y_1, \dots, y_m) & \text{$y_i=0$ if $i \in X_1 \cup X_2$ and $y_i=1$ if $i \in X_3$} \\
   & (z_1, \dots, z_m) & \text{$z_i=0$ if $i \in X_1$ and $z_i=1$ if $i \in X_2 \cup X_3$}
\end{align*}
This is precisely the intersection of the following collection of hyperplanes in $[0, 1]^m$:
\begin{align*}
  \{ \{x_i=0\} \mid i \in X_1\} \cup \{\{x_i =1\}\mid i\in X_3\} \cup \{\{x_i=x_j\}\mid i,j \in X_2\}.
\end{align*}
A graph isomorphism between $\cC(\cA)^{(1)}$ and $C$ is defined as follows:
the 0-cell of $\cC(\cA)$ given by $X \subseteq \{1, \dots, m\}$ corresponds to the 0-cell $F(n)\tau_X$ of $C$, and the 1-cell of $\cC(\cA)$ given by a triple $(X_1, X_2, X_3)$ (satisfying the above) corresponds to the 1-cell connecting the pair $F(n)\tau_{X_3}$ and $F(n)\tau_{X_2 \cup X_3}$.
\end{sproof}
\end{proposition}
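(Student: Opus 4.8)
The plan is to promote the candidate assignment already described into a genuine graph isomorphism by checking it is bijective on vertices and that it matches edges with edges in both directions. After using the $\G{n}$-action to arrange that the element of $\G{n}$ representing $C$ is trivial, so that the vertices of $C$ are exactly the cosets $F(n)\tau_X$ for $X \subseteq \{1, \dots, m\}$, I would fix the arrangement $\cA$ built from $\textbf{Y} = \{i \mid \tau_i\tau_{i+1} \text{ is special}\}$ and record the combinatorial model of $\cC(\cA)^{(1)}$: its $0$-cells are the subsets $X$, and its $1$-cells are the triples $(X_1, X_2, X_3)$ in which $X_2 = \{j, \dots, k\}$ is an interval with $i \in \textbf{Y}$ for all $j \leq i \leq k-1$, joining $X_3$ to $X_2 \cup X_3$. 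The map under scrutiny sends $X$ to $F(n)\tau_X$ and such a triple to the pair $(F(n)\tau_{X_3}, F(n)\tau_{X_2 \cup X_3})$.

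First I would settle the vertex map. Since the list is sorted, hence independent, the letters $\tau_i$ commute pairwise, and cancelling common factors gives $\tau_X \tau_{X'}^{-1} = \tau_{X \setminus X'}\,\tau_{X' \setminus X}^{-1}$. Sortedness also forces the $y$-letters occurring in distinct $\tau_i$ to sit at strictly separated leaves, so in this product no positive letter from $X \setminus X'$ can cancel a negative letter from $X' \setminus X$; hence its standard form retains $y$-content, and therefore lies in $F(n)$, only when both differences are empty. This proves $X \mapsto F(n)\tau_X$ is injective, and surjectivity onto the vertices of $C$ is immediate from the definition of $C$.

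The substance of the proof is the edge correspondence. Commutativity again gives $\tau_{X_2 \cup X_3}(\tau_{X_3})^{-1} = \tau_{X_2}$, so $F(n)\tau_{X_3}$ and $F(n)\tau_{X_2 \cup X_3}$ are joined in $X^{(1)}$ exactly when $\tau_{X_2}$ has a standard form whose $y$-part is special. For the forward direction I would prove the combinatorial lemma that, for a \emph{proper} sorted list, $\tau_j \tau_{j+1} \cdots \tau_k$ is special precisely when $\{j, \dots, k\}$ is an interval with $i \in \textbf{Y}$ for every $j \leq i \leq k-1$. The hypothesis that the list is proper is exactly what guarantees that a consecutive pair is genuinely alternating rather than only alternating after inverting one factor, so the pairwise junction data recorded by $\textbf{Y}$ splices, by induction on the interval length, into specialness of the whole product. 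This shows every admissible triple is carried to a true edge of $C$.

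The main obstacle is the converse, namely that $C$ has no edges beyond these. Given joined cosets $F(n)\tau_A$ and $F(n)\tau_B$, I would write $\tau_A\tau_B^{-1} = \tau_{A \setminus B}\,\tau_{B \setminus A}^{-1}$ and argue that a special word cannot mix a nonempty positive block $\tau_{A\setminus B}$ with a nonempty negative block $\tau_{B \setminus A}$ supported on lexicographically separated leaves: the alternation of signs demanded by specialness, together with the $n-2$ leaf-spacing condition, is incompatible with the sign pattern imposed by membership in $A$ versus $B$ unless one of the two differences is empty. Hence $A \subseteq B$ or $B \subseteq A$, and applying the interval lemma to the nonempty difference produces the required admissible triple. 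This is the delicate point, since it rests on a precise analysis of how standard forms of products of independent special words can fail to collapse to a single special word, and it is here that sortedness and properness are used decisively. With both directions established, the vertex bijection together with the edge matching is the desired graph isomorphism $C \cong \cC(\cA)^{(1)}$.
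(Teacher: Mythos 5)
Your proposal is correct and follows essentially the same route as the paper's sketch: reduce to the case of trivial $\G{n}$-element, form $\textbf{Y}$ and the admissible arrangement $\cA$, model $\cC(\cA)^{(1)}$ combinatorially by subsets and interval triples, and match $X \mapsto F(n)\tau_X$. The only difference is that you additionally outline the verification that this assignment is bijective on vertices and exact on edges in both directions (via independence, the interval lemma for proper sorted lists, and the non-collapsing of mixed positive/negative blocks), which the paper's sketch asserts without spelling out.
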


Note that this identification does not depend on the choice of the sorted list of special words and the element in $\G{n}$.

Since $\cC(\cA)$ has a complex structure, by using the above identification, we can define higher cells of $X$.
\begin{definition}
  Let $C \in \cH$ and let $\cC(\cA)$ be the cluster such that the $1$-skeleton of $\cC(\cA)$ is identified with $C$.
  A \textit{filling} $\textbf{C}$ of $C$ is given by attaching higher cells of $\cC(\cA)$ to $C$ in $X^{(1)}$.
  The complex $X$ is defined as $X=\bigcup_{C\in\cH}\textbf{C}$.
\end{definition}
The action of $\G{n}$ on $X^{(1)}$ is naturally extended to $X$.
To show that this space $X$ is a CW complex, we need to prove the following two propositions.
We omit their proofs for the sake of length, but they are all obtained by a natural generalization of the argument in \cite{lodha2020nonamenable}.
\begin{proposition}[{cf.~\cite[Proposition 7.8]{lodha2020nonamenable}}]
  Let $C_1, C_2 \in \cH$.
  If $C_1 \cap C_2$ is nonempty, then it is in $\cH$.
\end{proposition}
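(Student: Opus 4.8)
The plan is to generalize the argument of \cite[Proposition 7.8]{lodha2020nonamenable}, reducing the statement to a combinatorial fact about clusters by first exploiting the $\G{n}$-action and then the identification of each $C_i$ with a cluster $1$-skeleton.

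First I would normalize the position of the intersection. Since $C_1\cap C_2$ is nonempty, fix a vertex $F(n)g$ lying in it. Because the right action of $\G{n}$ permutes $\cH$ (cf.~\cite[Lemma 7.7]{lodha2020nonamenable}), replacing $C_i$ by $C_i\cdot g^{-1}$ moves this vertex to the base coset $F(n)$ and carries $C_1\cap C_2$ to $(C_1\cap C_2)\cdot g^{-1}$. As the $\G{n}$-action preserves membership in $\cH$, it suffices to prove that the translated intersection lies in $\cH$; hence I may assume $F(n)\in C_1\cap C_2$. By the preceding proposition I then identify each $C_i$ with the $1$-skeleton of a cluster $\cC(\cA_i)$ coming from a proper sorted list of special words, so that the vertex set of $C_i$ is $\{F(n)\tau_X\mid X\subseteq\{1,\dots,m_i\}\}$ and the coordinate directions correspond to inserting the constituent special words. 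Recalling that each special word is supported on the leaves of an $n$-ary tree, $C_i$ is encoded by a single $n$-ary tree $T_i$ together with the alternating $\pm1$ leaf-data recording its special words.

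The heart of the argument is to describe the shared vertices. A vertex $F(n)\tau_X$ of $C_1$ equals a vertex $F(n)\tau_Y$ of $C_2$ exactly when the corresponding products of special words differ by an element of $F(n)$, and by the standard-form theory for $\G{n}$ this happens iff their leaf-data agree once both are expressed over the common refinement $T=T_1\wedge T_2$ of the two trees. Re-expressing both clusters over $T$, I would show that the set of such shared cosets is precisely the vertex set of a subcluster, obtained by restricting $\cC(\cA_1)$ (equivalently $\cC(\cA_2)$) to a flat cut out by the coordinate hyperplanes indexed by the special words common to both lists; equivalently, this subcluster is recorded by the sorted sublist of common special words together with a base coset. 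Since the edges induced on these vertices are exactly the cluster edges of that subcluster, the subgraph $C_1\cap C_2$ is the $1$-skeleton of a cluster, and therefore lies in $\cH$.

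The main obstacle is exactly this combinatorial matching: proving that the shared vertex set forms a genuine subcluster, with neither spurious nor missing edges. This rests on two points that must be verified for general $n$ rather than quoted from \cite{lodha2020nonamenable}: the uniqueness of standard forms in $\G{n}$, which controls when two special-word products represent the same coset, and the fact that the defining constraint on special words---that consecutive active leaves are separated by exactly $n-2$ leaves---is preserved under passing to the common refinement $T_1\wedge T_2$. Once this compatibility is established, identifying the intersection with a subcluster $1$-skeleton and concluding $C_1\cap C_2\in\cH$ is routine, and the whole proof runs parallel to \cite[Proposition 7.8]{lodha2020nonamenable} with binary trees systematically replaced by $n$-ary trees.
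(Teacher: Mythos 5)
The paper does not actually contain a proof of this proposition: it is stated alongside the companion proposition on fillings, and the authors explicitly omit both proofs ``for the sake of length,'' deferring to a natural generalization of \cite[Proposition 7.8]{lodha2020nonamenable}. Your proposal follows exactly that intended route---normalize by the right $\G{n}$-action so that $F(n)$ is a common vertex, identify each $C_i$ with the $1$-skeleton of a cluster via a proper sorted list, and use uniqueness of standard forms to decide when two products of special words determine the same right $F(n)$-coset---so there is no methodological divergence to report, only the question of whether your outline of the omitted combinatorics is sound.

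One point is stated too narrowly, and it sits precisely at the step you identify as the main obstacle. The intersection is not in general cut out by \emph{coordinate} hyperplanes indexed by the special words literally common to the two sorted lists: diagonal flats $\{x_i=x_{i+1}\}$ can and do occur. For instance, let $C_1$ be given by an alternating (hence consecutive) pair $\tau_1,\tau_2$ with trivial base element, and let $C_2$ be given by the single special word $\tau_1\tau_2$. By uniqueness of standard forms, $F(n)\tau_1$ and $F(n)\tau_2$ are not vertices of $C_2$, so $C_1\cap C_2$ has vertex set $\{F(n),\,F(n)\tau_1\tau_2\}$; inside the square $[0,1]^2$ of $C_1$ this is the diagonal $x_1=x_2$, not a face, and the word $\tau_1\tau_2$ appears verbatim in neither entry of $C_1$'s list. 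So ``the sorted sublist of common special words'' must be interpreted up to regrouping consecutive alternating words, i.e., the flat you restrict to may involve hyperplanes of type~2 as well as type~1. This does not derail the argument---the restriction of a cluster to any flat of an admissible arrangement still has vertex set of the form $\{F(n)\sigma_Y\sigma\}$ for a sorted list of (possibly merged) special words and hence still lies in $\cH$---but the combinatorial matching you defer must be set up to allow these diagonal identifications, exactly as in the binary case of \cite{lodha2020nonamenable}. With that correction, your plan is the one the paper intends.
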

\begin{proposition}[{cf.~\cite[Proposition 7.27]{lodha2020nonamenable}}]
  Let $C_1, C_2 \in \cH$ with $C_1 \subset C_2$.
  Then $\textbf{C}_1$ is the subcomplex of $\textbf{C}_2$ obtained from $C_1 \subset \textbf{C}_2$.
\end{proposition}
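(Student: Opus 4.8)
The plan is to reduce the statement to the combinatorial fact that the cluster structure is hereditary under the subcluster relation, following \cite[Proposition 7.27]{lodha2020nonamenable}. By definition the filling $\textbf{C}$ of any $C \in \cH$ is a single cluster $\cC(\cA)$ whose $1$-skeleton has been identified with $C$; thus it suffices to realize $C_1$ as the $1$-skeleton of a \emph{subcluster} of the cluster attached to $C_2$, compatibly with the inclusion. To set up, represent $C_2$ by a proper sorted list of special words $\sigma_1, \dots, \sigma_m$ and an element $g \in \G{n}$, and let $\cA$ be the admissible arrangement in $\bR^m$ whose cluster $\cC(\cA)$ is identified with $C_2$ by the previous proposition, so that the vertices of $C_2$ are
\begin{align*}
  \{ F(n) \sigma_X g \mid X \subseteq \{1, \dots, m\} \}.
\end{align*}
Since right multiplication by any element of $\G{n}$ maps $\cH$ into $\cH$ and the $\G{n}$-action on $X^{(1)}$ extends cellwise to $X$ (cf.~\cite[Lemma 7.7]{lodha2020nonamenable}), acting by $g^{-1}$ sends fillings to fillings and preserves the inclusion $C_1 \subset C_2$; hence I may assume $g = e$.

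The heart of the argument is to identify $C_1$ with the $1$-skeleton of a subcluster of $\cC(\cA)$. Because $C_1 \subset C_2$, every vertex of $C_1$ equals $F(n)\sigma_X$ for some $X \subseteq \{1, \dots, m\}$ and every edge of $C_1$ is an edge of $C_2$. I would show that the subsets $X$ appearing among the vertices of $C_1$, together with the edges joining them, are exactly those cut out by a flat $T$ of $\cA$: the coordinates that remain constant across all vertices of $C_1$ are fixed to $0$ or $1$ (a type-$1$ restriction), while coordinates constrained to vary together are identified (a type-$2$ restriction), giving $C_1 = (\cC(\cA) \cap T)^{(1)}$. Here the hypothesis that the sorted list is \emph{proper} is essential: it ensures that consecutive special words which could be merged have already been merged, so that the independence and alternation data defining $C_1$ are inherited verbatim from those of $C_2$, and no edge of $C_1$ can be a transition that $\cA$ fails to record. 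This step is the main obstacle, since an a priori arbitrary element $C_1 \in \cH$ lying inside $C_2$ must be shown to come from a flat rather than from some more general subgraph; carrying it out amounts to translating the algebraic sorted-list data of $C_1$ into the geometric flat data of $\cA$ and checking, through the $n$-adic analogues of the special-word combinatorics of \cite{lodha2020nonamenable}, that the two descriptions agree exactly.

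Granting this identification, the conclusion follows formally. The restriction of an admissible arrangement to a flat is again admissible, so the subcluster $\cC(\cA) \cap T$ is itself a cluster whose $1$-skeleton is $C_1$, and its cell structure is precisely the restriction of that of $\cC(\cA)$. By the well-definedness of the filling noted after the cluster-identification proposition, $\textbf{C}_1$ equals this subcluster. Moreover, since each cell of $\cC(\cA)$ is convex and $T$ is an affine subspace, a cell of $\cC(\cA)$ lies in $T$ as soon as its vertices do; hence the cells of $\textbf{C}_2 = \cC(\cA)$ whose $1$-skeleton is contained in $C_1$ are exactly the cells of $\cC(\cA) \cap T$. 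Therefore $\textbf{C}_1$ is exactly the subcomplex of $\textbf{C}_2$ spanned by $C_1$, as claimed.
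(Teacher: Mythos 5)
The paper does not actually write out a proof of this proposition---it explicitly omits it ``for the sake of length,'' deferring to the natural generalization of \cite[Proposition 7.27]{lodha2020nonamenable}---so there is no in-paper argument to compare yours against step by step. Your overall strategy is the right one and matches the cited source: reduce to the case where the translating element is trivial via the cell-permuting $\G{n}$-action, identify $C_1$ with the $1$-skeleton of the subcluster $\cC(\cA)\cap T$ for a suitable flat $T$ of the admissible arrangement attached to $C_2$, and then conclude using the well-definedness of fillings together with convexity of cells. The remarks on where properness enters and on why the restriction of an admissible arrangement to a flat is again admissible are also correct.

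The problem is that the step you yourself call ``the main obstacle'' is the entire mathematical content of the proposition, and your proposal only announces it. Concretely: $C_1$ comes with its \emph{own} proper sorted list of special words $\rho_1,\dots,\rho_l$ and element $\rho$, and the hypothesis is only that its vertex set $\{F(n)\rho_X\rho\}$ sits inside $\{F(n)\sigma_X\}$ and its edges are edges of $C_2$. To produce the flat $T$ you must prove that this containment of coset data forces $\rho$ to be $F(n)$-equivalent to some $\sigma_{X_0}$ and forces each $\rho_j$ to be, up to inversion, a product of a consecutive block $\sigma_a\sigma_{a+1}\cdots\sigma_b$ with every intermediate index lying in $\mathbf{Y}$; only then do ``coordinates fixed to $0$ or $1$'' and ``coordinates identified'' correspond to actual hyperplanes of $\cA$. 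As written, your sentence about constant and covarying coordinates describes the desired conclusion rather than deriving it: one must rule out that the cosets $F(n)\rho_X\rho$ coincide with various $F(n)\sigma_X$ for combinatorially incompatible reasons, and this is exactly where the uniqueness of normal forms and the calculus of special words for $\G{n}$ (the $n$-adic analogues of the lemmas preceding \cite[Proposition 7.27]{lodha2020nonamenable}, available via \cite{kodama2023n}) must be invoked and checked. So the route is correct, but the key combinatorial lemma remains a placeholder rather than a proof.
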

\subsection{Ideas for the proof of Theorem \ref{thm_G_0(n)_F_infty}}
\subsubsection{$X$ is contractible}
It is sufficient to show that for any $m \geq 1$, the $m$-th homotopy group $\pi_m(X)$ is trivial.
If $m=1$, we can directly show that any closed curve and the trivial curve are homotopic.
Roughly speaking, this is due to the following facts:
\begin{enumerate}
  \item Any closed path in $X^{(1)}$ whose endpoint is $F(n)$ is homotopic to the path $F(n), F(n)(\y{s_k}{t_k}), \dots, F(n)(\y{s_k}{t_k} \cdots \y{s_1}{t_1})=F(n)$.
  \item for any element $g$ in $\G{n}$, there is a unique normal form representing $g$.
  \item Transformations from words to the normal forms can be realized as homotopies on $X^{(1)}$.
\end{enumerate}
Note that to complete these discussions, it is necessary to define certain cells of $X$ and generalize the arguments in \cite[Section 4]{lodha2020nonamenable} (or \cite{lodha2016nonamenable}), and this generalization of the latter has already been given in \cite{kodama2023n}.

For $m\geq 2$, the discussion becomes more complicated. As in \cite{lodha2020nonamenable}, the key idea is to ``sandwich'' nonpositively curved cube complexes (in the sense of Gromov) between $X$ and any image of $S^m$ to $X$ under a continuous map.
Recall that nonpositively curved cube complexes are aspherical \cite{gromov1987hyperbolic,bridson2013metric}.
Then the following holds:
\begin{proposition}[{cf.~\cite[Section 10]{lodha2020nonamenable}}]
  For any finite subcomplex $Y$ of $X$, there exist a nonpositively curved cube complex $Z$ and two continuous maps $f\colon Y \to Z$, $g\colon Z \to X$ such that the following diagram commutes:
  \begin{equation*}
    \begin{tikzcd}
      & Z \arrow[rd, "g"] &   \\
      Y \arrow[ru, "f"] \arrow[rr, hook] &                   & X
    \end{tikzcd}
  \end{equation*}
\end{proposition}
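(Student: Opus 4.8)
The plan is to follow the sandwiching strategy of \cite[Section 10]{lodha2020nonamenable}, generalizing it from the dyadic ($n=2$) case to the $n$-adic one. The point of the statement is that a nonpositively curved (hence, by \cite{gromov1987hyperbolic,bridson2013metric}, aspherical) cube complex carries no higher homotopy, so a factorization $Y \xrightarrow{f} Z \xrightarrow{g} X$ of the inclusion through such a $Z$ lets one push any $\varphi\colon S^m \to X$ with $m \geq 2$ (whose image lands in a finite subcomplex $Y$) through $Z$: there $f\circ\varphi$ is null-homotopic because $Z$ is aspherical, and composing with $g$ shows $\varphi$ is null-homotopic in $X$. Thus the proposition is exactly the tool that upgrades contractibility beyond $\pi_1$.

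First I would reduce to a convenient $Y$. Since restricting $f$ to a subcomplex of $Y$ keeps the triangle commutative, it suffices to build the sandwich for any finite subcomplex containing the given one; so I would enlarge $Y$ to a finite union of fillings $\textbf{C}_1, \dots, \textbf{C}_k$ of elements $C_1, \dots, C_k \in \cH$. This enlargement is legitimate precisely because intersections of elements of $\cH$ again lie in $\cH$, and fillings of nested elements of $\cH$ are subcomplexes of one another (the two preceding propositions), so such a union is an honest finite subcomplex of $X$ assembled from clusters glued along subclusters.

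Next I would build $Z$ cluster by cluster. Each filling $\textbf{C}_j$ is the cluster $\cC(\cA_j)$ of an admissible arrangement, that is, the cube $[0,1]^{m_j}$ cut only by the type-$2$ hyperplanes $\{x_i=x_{i+1}\}$ for $i\in\textbf{Y}_j$. The top-dimensional regions of this subdivided cube are indexed by the sign of $x_i-x_{i+1}$ for each $i\in\textbf{Y}_j$, with adjacent regions differing by a single sign flip; this sign-vector bookkeeping is what makes cubes appear, and it is the combinatorial input from which \cite[Section 10]{lodha2020nonamenable} reads off, for each cluster, an associated cube complex $Z_j$ together with cellular maps $f_j\colon \textbf{C}_j \to Z_j$ and $g_j\colon Z_j \to \textbf{C}_j$ satisfying $g_j\circ f_j=\mathrm{id}$. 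I would then glue the $Z_j$ along the cube models of the subclusters $\textbf{C}_i\cap\textbf{C}_j$ (again lower-dimensional clusters) to obtain a global cube complex $Z$, assembling $f$ and $g$ from the local maps; compatibility of the local sections over the overlaps forces $g\circ f$ to agree with the inclusion $Y \hookrightarrow X$.

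The hard part will be verifying that $Z$ is nonpositively curved, i.e.\ Gromov's link condition that the link of every vertex of $Z$ is a flag simplicial complex. This is the technical heart of \cite[Section 10]{lodha2020nonamenable}, and it is where the fine combinatorics enters: one must show that whenever finitely many edges at a vertex are pairwise spanned by squares, they are jointly spanned by a cube, and this draws on the sorted, proper, and alternating structure of special words together with admissibility of the arrangements. The generalization to arbitrary $n$ replaces dyadic trees by $n$-ary trees and the dyadic special words by the $n$-adic ones of Section \ref{subsection_LM}, but the shape of the link argument is unchanged. Once the link condition is confirmed, $Z$ is nonpositively curved, its universal cover is CAT(0), and hence $Z$ is aspherical, which completes the sandwich.
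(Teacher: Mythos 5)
The paper offers no proof of this proposition: it sits in the appendix, which is explicitly a sketch, and the authors simply point to Section 10 of \cite{lodha2020nonamenable} and assert that the argument generalizes naturally from $n=2$ to arbitrary $n$. Measured against that, your proposal identifies the right ingredients --- enlarging $Y$ to a finite union of fillings (justified by the two closure propositions for $\cH$), building a cube complex cluster by cluster, gluing along subclusters, and invoking asphericity of nonpositively curved cube complexes --- so the overall shape matches what the paper intends.

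As a proof, however, there is a concrete gap, and it is located one step earlier than where you place the difficulty. For a \emph{single} cluster the sandwich is trivial: the unsubdivided cube $[0,1]^{m_j}$, with the identity maps, already gives $f_j$, $g_j$ with $g_j\circ f_j=\mathrm{id}$, and a single cube is nonpositively curved. The entire content lies in the gluing, because the subclusters along which neighbouring fillings meet are intersections of $\cC(\cA_j)$ with flats that involve the type-2 hyperplanes $\{x_i=x_{i+1}\}$; these diagonal loci are \emph{not} subcomplexes of any product cube structure on $[0,1]^{m_j}$. So the instruction ``glue the $Z_j$ along the cube models of the subclusters'' presupposes exactly what must be constructed: local cube complexes $Z_j$ in which every subcluster is realized as a genuine subcomplex, compatibly across overlaps. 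That construction is the actual substance of \cite[Section 10]{lodha2020nonamenable}, and it does not follow from the sign-vector bookkeeping of the regions of the braid-type subdivision. The subsequent verification of Gromov's flag link condition for the glued complex, which you correctly flag, is then also deferred entirely to the reference. Until both steps are carried out for the $n$-adic special words, the proposal is a roadmap rather than a proof --- though in fairness the paper itself records even less.
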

\subsubsection{The quotient $X/G$ has finitely many cells in each dimension}
In \cite[Section 8.2]{lodha2020nonamenable}, for a given dimension $m$, a sufficient condition is given for two $m$-clusters to be in the same orbit.
By this condition, it follows that the number of orbits is finite when $n=2$.
These arguments are easily generalized to the $n$-adic case.
\subsubsection{The stabilizer of each cell has type {$\rm F_\infty$}.}
The main idea is to reduce the group to the intersection of stabilizer subgroups (of $F(n)$) of $0$-cells.
Then for each $0$-cell $x=F(n)\y{s_1}{t_1}\cdots \y{s_m}{t_m}$ and for any $f \in \stab{F(n)}{x}$, we have $f=f_1 f_2 f_3$ where $\supp{f_1} \subset (\ov{0}, s_1\ov{0})$, $\supp{f_2} \subset (s_1\ov{0}, s_m\ov{(n-1)})$, and $\supp{f_3} \subset (s_m\ov{(n-1)}, \ov{(n-1)})$.
This give a map $\stab{F(n)}{x} \to F(n) \times F_{3, m} \times F(n); f \to (f_1, f_2, f_3)$.
Since the map $y$ are almost the same for $n=2$ as for the general case, we obtain a similar result.
See \cite[Section 6 and 8]{lodha2020nonamenable} for details.
\bibliographystyle{plain}
\bibliography{references}
\bigskip

Yuya KODAMA

\address{
  Graduate School of Science and Engineering
  Kagoshima University
  1-21-35 Korimoto, Kagoshima
  Kagoshima 890-0065, Japan
}

\textit{E-mail address}: \href{mailto:yuya@sci.kagoshima-u.ac.jp}{\texttt{yuya@sci.kagoshima-u.ac.jp}}

\bigskip

Akihiro TAKANO

\address{Department of Mathematics, Graduate School of Science
  Osaka University
  1-1 Machikaneyama, Toyonaka
  Osaka 560-0043, Japan
}

\textit{E-mail address}: \href{mailto:takano.akihiro.sci@osaka-u.ac.jp}{\texttt{takano.akihiro.sci@osaka-u.ac.jp}}
\end{document}